\newtheorem{theorem}{Theorem}[section]
\newtheorem{e-proposition}[theorem]{Proposition}
\newcommand{\RR}{\mathbb{R}}
\newcommand{\N}{\mathbb{N}}
\renewcommand{\leq}{\leqslant}
\renewcommand{\geq}{\geqslant}
\renewcommand{\tilde}{\widetilde}
\renewcommand{\t}{\tilde}
\newcommand{\blue}{}
\newcommand{\di}{\displaystyle}
\def\1{\mathbbm{1}}
\numberwithin{equation}{section}
\newtheorem{Lemma}{Lemma}[section]
\newtheorem{Th}[Lemma]{Theorem}
\newtheorem{Prop}[Lemma]{Proposition}
\newtheorem{Rk}[Lemma]{Remark}
\newcommand{\be}{\begin{equation}}
\newcommand{\ee}{\end{equation}}
\newcommand{\baa}{\begin{array}}
\newcommand{\eaa}{\end{array}}
\newcommand{\ba}{\begin{eqnarray}}
\newcommand{\ea}{\end{eqnarray}}
\newcommand{\vp}{\varphi}
\def\epsilon{\varepsilon}
\def\J{\mathcal{J}}
\def\II{\mathcal{I}}
\def\TT{\mathcal{T}}
\def\SST{\mathcal{S}}
\def\DD{\mathcal{D}}
\def\UU{\mathcal{U}}
\def\VV{\mathcal{V}}
\def\mc{\mathcal}
\def\Z{{\mathbb Z}}
\def\trait (#1) (#2) (#3){\vrule width #1pt height #2pt depth #3pt}
\def\fin{\hfill\trait (0.1) (5) (0) \trait (5) (0.1) (0) \kern-5pt \trait (5) (5) (-4.9) \trait (0.1) (5) (0)}
\def\todo#1 {\marginpar{\textcolor{red}{$\Rightarrow$#1}}}
\newcommand{\bee}{\begin{equation*}}
\newcommand{\eee}{\end{equation*}}
\newcommand{\bc}{\begin{cases}}
\newcommand{\ec}{\end{cases}}
\def\e{\varepsilon}
\def\cK{c_{\scalebox{0.5}{$K$}}}
\def\cSIR{c_{\scalebox{0.5}{$SIR$}}}
\def\cSIRT{c_{\scalebox{0.5}{$SIR$}}^{\scalebox{0.5}{$T$}}}
\def\wSIR{w_{\scalebox{0.5}{$SIR$}}}
\def\wSIRT{w_{\scalebox{0.5}{$SIR$}}^{\scalebox{0.5}{$T$}}}
\def\WSIRT{\omega_{\scalebox{0.5}{$SIR$}}^{\scalebox{0.5}{$T$}}}
\begin{document}
\title{\bf Biological invasions and epidemics with nonlocal diffusion along a line}
\author{Henri {\sc Berestycki}$^{\hbox{\small a,b,c}}$,
Jean-Michel {\sc Roquejoffre}$^{\hbox{\small d}}$, Luca {\sc Rossi}$^{\hbox{\small e,a}}$\\
\footnotesize{$^{\hbox{a }}$ Centre d'Analyse et Math\'ematiques Sociales,}
\footnotesize{EHESS - CNRS, Paris, France}\\
%\footnotesize{54 boulevard Raspail, F-75006 Paris, France}\\
\footnotesize{$^{\hbox{b }}$ Department of Mathematics, University of Maryland, College Park, MD 20742, USA}\\
\footnotesize{$^{\hbox{c }}$ Senior Visiting Fellow, Inst. Advanced Studies,}
\footnotesize{HKUST, Hong Kong}\\
\footnotesize{$^{\hbox{d }}$ Institut de Math\'ematiques de Toulouse,}
\footnotesize{%118 route de Narbonne, F-31062 Toulouse Cedex 4, France
Universit\'e Paul Sabatier, Toulouse, France}\\
\footnotesize{$^{\hbox{e }}$ Istituto ``G.~Castelnuovo'',}
\footnotesize{Sapienza Università di Roma, Rome, Italy}}
\date{}
\maketitle

%\vspace{.4cm}
\vspace{.2cm}

\begin{abstract}
The goal of this work is to understand and quantify how a line with nonlocal  diffusion given by an integral enhances a reaction-diffusion process occurring in the surrounding plane. This is part of a long term programme where we aim at modelling, in a mathematically rigorous way,  the effect  of transportation networks on the speed of biological invasions or propagation of epidemics. 

We prove the existence of a global propagation speed and 
characterise in terms of the parameters of the system the situations where such a speed is boosted by the presence of the line.
 In the course of the study we also uncover unexpected regularity properties of the model. On the quantitative side, the two main parameters are the intensity of the diffusion kernel and the characteristic size of its support. One outcome of this work is that the propagation speed will significantly be enhanced even if only one of the two is large, thus broadening the picture that we have already drawn in our previous works on the subject, with local diffusion modelled by a standard Laplacian. 

We further investigate the role of the other parameters, enlightening some 
subtle effects due to the interplay between the diffusion in the half plane and that on the line. 
Lastly, in the context of propagation of epidemics, we also discuss the model where, instead of a diffusion, displacement on the line comes from a pure transport term.
 \end{abstract}

%-----------------------------------------------------------------------------------

\bigskip

\noindent
\textbf{MSC:} 35K57, 92D25, 92D30, 35B40, 35K40, 

\noindent\textbf{Key words:} line of integral diffusion, front propagation, reaction-diffusion, propagation enhancement.

%-----------------------------------------------------------------------------------

\hypersetup{linkcolor=black}
\tableofcontents
\hypersetup{linkcolor=red}

%-----------------------------------------------------------------------------------
%-----------------------------------------------------------------------------------
%\input{Sections/Model}

\section{Biological invasions and epidemics in the presence of a line}\label{s2}
%Introduction, Models under study

This work is part of a programme aimed at understanding the effect of a line, or a network of lines, on  propagation properties in the context of reaction-diffusion. The underlying motivation is to model the effect of a line of transportation, such as a road, a railway, or a waterway, 
on the spreading of a biological invasion, or the dissemination of epidemics. In the present paper, we examine the case of a nonlocal diffusion on the line.

In  \cite{BRR2}, or \cite{SIRT1} we considered the
 framework of local diffusion on the line. There, we analysed the case 
of a line having a diffusion of its own, given by a multiple of the Laplacian (thus associated with random brownian motion of individuals), and coupled with a Fisher-KPP, or diffusive SIR  process 
with   diffusion in the adjacent plane.
We computed the propagation velocity, and an important outcome was that the overall propagation was increased as soon as the diffusion on the line exceeded a certain explicit threshold. Our results here recover and generalise those of the aforementioned papers.

An even more drastic effect was observed by A.-C. Coulon and the authors of the present paper in \cite{BCRR}, where the diffusion was given by a fractional Laplacian. In that case, the fronts propagate exponentially fast in time, just as   Fisher-KPP fronts with such a kind of diffusion (see Cabré-Roquejoffre~\cite{CR}).  Thus, the present work is a  further evidence that the line  communicates the characteristics of its own diffusion to the whole process, regardless of the type of diffusion in the rest of the plane. 

%
%\end{document}

Our analysis is carried out  for two distinct models: biological invasions in the context of
population dynamics, and the spreading of epidemics. We describe these two frameworks in the following two subsections. They are closely related through a well known transformation that we will recall in Section~\ref{s2bis}.

\subsection{Biological invasions}\label{sec:Bio}
In the spirit of the system we introduced in \cite{BRR2},
we describe the invasion of a species whose dissemination is enhanced by a \blue{straight}
line of transportation by considering the upper half plane $\RR^2_+=\RR\times(0,+\infty)$, that we call --~in a stylised way~-- ``the field'', while its boundary $\RR\times\{0\}$
is referred to as ``the road''.
We restrict ourselves to the upper half-plane, rather than considering the whole plane crossed by a line, because
the results in the two cases can easily be deduced from one another.
%are exactly the same by symmetry reasons.
We then consider two distinct density functions describing the same species:
$u(t,x)$, $t>0$, $x\in\RR$, is the density on the road, 
$v(t,x,y)$, $t>0$, $(x,y)\in\RR^2_+$, is the one in the field. 
The individuals in the field are assumed to diffuse, according to the Laplace operator
with a diffusion coefficient  $d>0$, 
and to proliferate with rate $f(v)$, that we take to be smooth
%\,\footnote{ Smooth for us  means $C^\infty$. This is much more than the regularity we actually need, but we impose it because sharp regularity is not an issue we are willing to address  in the present paper.} 
 and of the Fisher-KPP type, i.e.~satisfying
\begin{equation}\label{KPP}
	f(0)=f(1)=0,\qquad 0<f(s)\leq f'(0)s \ \text{ for all }\,s\in(0,1).
\end{equation}
For the sake of definiteness, we extend $f(s)$ by a negative function for values of  $s>1$.

In contradistinction to what happens in the field, we assume that the diffusion on the road is {\em nonlocal}, reflecting 
displacements of larger amplitude (see Turchin \cite{T} for a biological discussion).   For this purpose, we let $K:\RR\to\RR$ be an even, 
%\nota{$K(0)>0$?}
smooth, nonnegative  function with unit mass, supported in $[-1,1]$. For $L>0$, we set
\begin{equation}
\label{e0.1}
K_{L}(x):=\frac{1}LK\Big(\frac{x}L\Big).
\end{equation}
Thus $K_{L}$ is supported in $[-L,L]$ and the mass condition is preserved:
$$\int_\RR K_L(x)dx=1.$$
%When it is not useful to stress its dependence in  $L$, we will just denote it by $K$.
We define the nonlocal diffusion operator, depending on the parameter $L>0$, by
$$
\J u(x): =\int_\RR K_L(x-x')\big(u(x')-u(x)\big)dx'.
$$
The parameter $L$ then represents the order of the distance that individuals travel to owing to this 
\blue{nonlocal} dispersal.

The model writes
\begin{equation}
\label{e1.2}
\left\{
\begin{array}{rll}
\partial_tv-d\Delta v\,=&f(v) &\quad(t>0,\ (x,y)\in\RR^2_+)\\
-d\partial_yv\,=&\mu u-\nu v &\quad(t>0,\ x\in\RR,\ y=0)\\
\partial_t u-D\J u\,=&\nu v-\mu u &\quad(t>0,\ x\in\RR,\ y=0),
\end{array}
\right.
\end{equation}
where $d,D$ are positive parameters (recall that $u$ is independent of $y$).

In~\cite{BRR2} we considered the case where the diffusion on the road was given  by $D\partial_{xx}$
instead of~$D\J$. The main result 
we derived is
the existence of a spreading velocity in the horizontal direction, and the comparison with the classical
Fisher-KPP velocity.
We will review these results in Section~\ref{sec:bio}, and we will relate them with the results 
we get here on the new Model~\eqref{e1.2}.
 
\subsection{Propagation of epidemics}
%Modelling the propagation of epidemics is a rich subject that has fostered an important number of publications. 
%\textcolor{red}{
The basic system of epidemiology describes bulk quantities, with total populations variables only depending on time,  that is, with no spatial dependence.         
Proposed first by Kermack-McKendick \cite{KMK} as
an elaborate set of integral equations, this model reduces, when some parameters are assumed to be constant, to the classical $SIR$ system. 
%Taking into account spatial dependence, leads to a nonlocal spatial interaction term. 
We owe it to Kendall to have first envisioned
that an epidemic could propagate as a front in space, with a definite speed.
 %seems to have first been envisioned by
 Kendall developed this idea in an answer to a study of Bartlett \cite{Bart}, the underlying mechanism being the nonlocal contamination rate (see also \cite{Kend}). We refer the reader to Ruan \cite{Ruan} for
 more information on the modelling issues.
%}

% 
In  \cite{SIRT1} we proposed a new stylized model,
built on the ideas of our preceding road-field model~\cite{BRR2}, 
to couple a classical $SIR$\,-type model with spatial diffusion in the plane 
with a diffusion-exchange equation on the  $x$-axis. The latter models a road on which infected individuals can travel, 
the diffusion being local and precisely given by~$-D\partial_{xx}$. 
The contamination process takes place outside the road, 
where the diffusion process of the infectious is modelled by a Laplacian $-d\Delta$.  
In addition to the classical compartments $S, I$ and $R$, this model therefore involves a fourth compartment, $T$, of traveling infectious population. Hence our reference to this model as the  ``$SIRT$--model''.
 %the $T$ standing for ``Travelling''. T

 The main conclusion involved 
 a parameter $R_0$, known as the ``Pandemic Threshold'' in such models. We showed that it  acts as a basic reproduction number: when it is larger than~$1$, there is front propagation. 
Moreover, the value of the ratio $D/d$ determines whether propagation occurs 
at the usual $SIR$ speed (as determined, for instance, by Aronson \cite{A}, or Diekmann \cite{Diek}), 
or rather at a larger speed. We also pointed out situations where the propagation velocity can be quite large, even though the epidemics looks fairly mild when $R_0$ very close to 1.

In the present paper we examine what happens when the diffusion on the road is governed by a
 nonlocal operator $\mc{J}$ as in the previous subsection. 
This translates the fact that individuals have the potential to instantly make large --~yet finite~-- jumps, 
and will result in a richer set of parameters to analyse. 
Specifically, we let $S(t,x,y)$ denote the fraction of susceptible individuals at time $t\geq0$ and position $(x,y)$
of the domain $\RR^2_+$
(as before we restrict to the upper half-plane in view of symmetry reasons).
We assume that susceptibles do not  diffuse, and let~$I(t,x,y)$ be the fraction of infected individuals in the domain,
and $T(t,x)$ (standing for ``travelling infected'') be the fraction of infected individuals on the $x$-axis. 
The former ones are assumed to diffuse according to standard diffusion with amplitude  $d>0$,
whereas the latter ones diffuse according to the nonlocal operator~$\mc{J}$ with coefficient~$D>0$. 

 Thus, the model 
writes
\begin{equation}
\label{e1.1}
\left\{
\begin{array}{rll}
\partial_tI-d\Delta I+\alpha I\,=&\beta SI &\quad(t>0,\ (x,y)\in\RR^2_+)\\
\partial_tS\,=&-\beta SI &\quad(t>0,\ (x,y)\in\RR^2_+)\\
-d\partial_yI\,=&\mu T-\nu I &\quad(t>0,\ x\in\RR,\ y=0)\\
\partial_t T-D\J T\,=&\nu I-\mu T &\quad(t>0,\ x\in\RR,\ y=0).
\end{array}
\right.
\end{equation}
 Of specific  interest to us here will be to determine the spreading velocity for this new model, and to compare it with the speed without the road. We will then study its dependence on the parameters
 of the system, in particular the ones involved in the nonlocal diffusion:~$D$,~$L$.
 
\blue{Note that, as in our earlier work \cite{SIRT1}, this system does not involve a spatial diffusion mechanism on the population of susceptible. Taking into account such a diffusion would lead one to replace the second equation in System~\eqref{e1.1} by 
 \begin{equation}\label{diffS}
\partial_tS - d_S \Delta S\,=-\beta SI \quad\text{in}\quad t>0,\ (x,y)\in\RR^2_+. 
 \end{equation}
  In the system we consider here, we make the simplification $d_S=0$ that allows us to conduct a mathematical analysis of this system. 
 As a matter of fact, even in the classical case of the $SIR$ setting with spatial diffusion on all compartments, 
 determining the asymptotic speed of diffusion is an open problem in general. For partial or related results on this question we refer the reader to the works of Ducrot \cite{DucrotSIR}, 
 Ducrot, Giletti and Matano \cite{DGM-sys}. 
 Therefore, it is customary to assume, even in this classical framework, that $d_S=0$ (compare for instance the book of 
 Murray \cite[Section~13.4]{Murray}).
From a modelling point of view, such a simplification is motivated by the consideration that the changes in the susceptible population resulting from spatial diffusion is of a lower order magnitude with respect to the size of population (in contradistinction with the diffusion of infectious). 
\newline\indent
 Nonetheless, we would like to stress that the study of System~\eqref{e1.1} with the second equation replaced by \eqref{diffS} (as well as the classical $SIR$ model) with diffusion $d_S>0$ is an open problem.}
 
 We also apply our methods to the analysis of a somewhat different, yet related, framework.  Namely, in the context of epidemic propagation we consider the influence of a line with a {\em transport} mechanism in one direction. It is of interest in situations where individuals travel away from main towns for instance as they try to move away from contamination. This effect was largely reported in the recent COVID-19 pandemic in several countries. We will see that the transport also enhances the global propagation, but that the value of the spreading speed is, unexpectedly, strictly less than that of the transport.
 
 \bigskip
  The paper is organised as follows. %\nota{à revoir une fois reglée les autres notes}
 Section~\ref{sec:results} contains the statements of the main results. 
 In Section~\ref{sec:IVP}, we perform a preliminary classical transformation that allows one 
 to reduce the $SIRT$ model~\eqref{e1.1} to a slight perturbation of~\eqref{e1.2}. Next
 we state a result about the well-posed character of the two systems.
 Section~\ref{sec:bio} is devoted to the model~\eqref{e1.2} for biological invasions, for which we present 
 firstly the Liouville-type result as well as the local-in-space
 convergence, and next the result about the propagation.
 Section~\ref{sec:speedSIRT} focuses on the specific features of the $SIRT$ model:
 the description of the steady state and its behaviour at infinity, and the result on
 the spreading of the epidemic wave.
 The results presented in those subsections are proved in Sections \ref{s2bis}, \ref{s2ter} 
 and \ref{s6} respectively. 
 In Section \ref{s7} we discuss our results and emphasise analogies and novelties
 with respect to previous models.

 \section*{\blue{Acknowledgments}}

 \blue{ 
 The authors are thankful to the referees for their helpful comments.}

\medskip

\blue{
 The research leading to these  results has received funding from the 
 ANR project ``ReaCh'' (ANR-23-CE40-0023-01),  
 from the PRIN project ``PDEs and optimal control methods in mean field games, population dynamics and multi-agent models''
 %  finanziato dallâ€™Unione europea â€“ Next Generation EU
 and from GNAMPA-INdAM. Part of this work was carried
 out during visits by H. Berestycki and J.-M. Roquejoffre
 to the Department of Mathematics of the Università Roma ``Sapienza'', whose hospitality is thankfully acknowledged.
}

\medskip
 
 %the cumulative numbers of infected individuals, and travelling individuals:
%\begin{equation}
%\label{e1.4}
%u(t,x)=\int_0^tT(s,x)ds,\ \ \ v(t,x,y)=\int_0^tI(t,x,y)ds
%\end{equation}
%satisfy the system
%\begin{equation}
%\label{e1.5}
%\left\{
%\begin{array}{rll}
%\partial_t u-D \J u= & \nu v(t,x,0)-\mu u \quad(t>0,\ x\in\RR)\\
%\partial_t v-d\Delta v=&S_0(1-e^{-\beta v})+I_0(x,y)\quad(t>0,\ X\in\RR^2_+)\\
%-d\partial_y v(t,x,0)=&\mu u(t,x,t)-  \nu v(t,x,0)\quad(t>0,\ x\in\RR).
%\end{array}
%\right.
%\end{equation}

%-----------------------------------------------------------------------------------

\section{Main results}\label{sec:results}

\subsection{Initial value problems}\label{sec:IVP}
 
The preliminary question is whether the Cauchy Problem for models \eqref{e1.2} and \eqref{e1.1} is well-posed.  
So, we supplement  \eqref{e1.2} with the initial datum
\begin{equation}
\label{e2.100}
(u(0,x),v(0,x,y))=(0,v_0(x,y))
\end{equation}
with $v_0\geq0$ smooth and compactly supported, 
and \eqref{e1.1} with the initial datum
\begin{equation}
\label{e2.101}
(S(0,x,y), I(0,x,y), T(0,x))=(S_0,I_0(x,y),0),
\end{equation}
with $S_0>0$ constant and $I_0\geq0$ smooth and compactly supported.
 It was  noticed in~\cite{SIRT1} that 
 \blue{the system \eqref{e1.1} can be reduced to a
 ``standard'' Fisher-KPP type model with fast diffusion on a line, 
 by considering the cumulative densities of $I$ and $T$,} thus allowing for the
treatment proposed in \cite{BRR2}. 
Namely, calling
$$
u(t,x):=\int_0^t T(s,x)ds,\qquad v(t,x,y):=\int_0^tI(s,x,y)ds,
$$
the system rewrites as
\begin{equation}
\label{e2.4}
\left\{
\begin{array}{rll}
\partial_tv-d\Delta v\,=&f(v)+I_0(x,y) &\quad(t>0,\ (x,y)\in\RR^2_+)\\
-d\partial_yv\,=&\mu u-\nu v &\quad(t>0,\ x\in\RR,\ y=0)\\
\partial_t u-D\J u\,=&\nu v-\mu u &\quad(t>0,\ x\in\RR,\ y=0),
\end{array}
\right.
\end{equation}
with
\begin{equation}\label{f(v)}
	f(v):=S_0(1-e^{-\beta v})-\alpha v,
\end{equation}
\begin{equation}\label{I0}
	S_0>0\text{ constant},\qquad I_0\not\equiv0\text{ non-negative, smooth and compactly supported},
\end{equation}
and initial datum \phantom{\eqref{f(v)}}
\begin{equation}\label{IDuv}
	(u(0,x),v(0,x,y))\equiv(0,0).
\end{equation} 
{This type of reduction is classical in the framework of the $SIR$ model in the whole space. It is the crucial observation that allowed
  Aronson \cite{A} to derive the asymptotic spreading speed.} 
\begin{Th}\label{cauchy}
The initial value problems \eqref{e1.2},\eqref{e2.100} with $v_0\geq0$ bounded and smooth, 
and \eqref{e1.1},\eqref{e2.101} with $S_0>0$ constant and $I_0\geq0$ bounded and smooth,
both have a unique
 classical, bounded solution. Moreover, first-order-in-time and second-order-in-space derivatives 
 of the solution are globally bounded and H\"older continuous.
\end{Th}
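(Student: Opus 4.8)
The plan is to handle both systems at once by reducing the $SIRT$ model to the setting of \eqref{e1.2}. As recalled above, the cumulative substitution $u=\int_0^t T\,ds$, $v=\int_0^t I\,ds$ turns \eqref{e1.1} into \eqref{e2.4}, i.e.\ exactly \eqref{e1.2} with the smooth time-independent source $I_0(x,y)$ and the nonlinearity \eqref{f(v)}; conversely $I=\partial_t v$, $T=\partial_t u$ and $S=S_0e^{-\beta v}$. Thus it suffices to solve the coupled field/line problem \eqref{e1.2}--\eqref{e2.4} and then transfer the conclusions. For \emph{local} well-posedness I would exploit the structural fact that, since $\int_\RR K_L=1$, the operator $\J u=K_L*u-u$ is a \emph{bounded} linear operator on $C_b(\RR)$. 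Hence the line equation is, for each fixed $x$, the scalar ODE $\partial_t u+(D+\mu)u=D\,K_L*u+\nu\,v|_{y=0}$, solvable by Duhamel, while the field equation is a semilinear heat equation in $\RR^2_+$ with the Robin-type condition $-d\partial_y v+\nu v=\mu u$. Freezing $u$, solving for $v$, and inserting the trace $v|_{y=0}$ back into the line equation defines a map $u\mapsto\tilde u$ that is a contraction on a short interval $[0,\tau]$ (the nonlinearity $f$ is locally Lipschitz and the coupling is linear); this gives a unique local classical solution. Equivalently, one may view the linear part as the generator of an analytic semigroup, of which $D\J$ is a bounded and $f$ a smooth perturbation.

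To globalise the solution I would establish uniform $L^\infty$ bounds by a comparison principle for the coupled system. The system is cooperative, and $\J$ obeys the maximum-principle inequality $\J u\le0$ at any point where $u$ attains its spatial maximum (there $K_L*u\le u$). For \eqref{e1.2} the non-negative data make $(v,u)=(0,0)$ a subsolution, so $v,u\ge0$; and the constant pair $(v,u)=(M,\nu M/\mu)$ with $M=\max(1,\|v_0\|_\infty)$ is a supersolution: the two coupling relations are compatible precisely when $\mu u=\nu M$, after which $f(M)\le0$ closes the remaining inequality. Hence $0\le v\le M$ and $0\le u\le\nu M/\mu$, uniformly in time. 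For the epidemic variables the cumulative $v,u$ grow (at most linearly), but $\partial_t S=-\beta SI\le0$ yields $0\le S\le S_0$ at once, after which the boundedness of $I$ and $T$ follows from comparison arguments on the cooperative $(I,T)$-subsystem, using $0\le S\le S_0$ and the eventual negative sign of $f'(v)=S_0\beta e^{-\beta v}-\alpha$.

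The regularity statement is the most delicate point, and I expect the up-to-the-boundary estimates to be the main obstacle. Interior parabolic smoothing makes $v$ smooth inside $\RR^2_+$; the difficulty is regularity up to $\{y=0\}$, where the coupling creates a feedback loop between the regularity of $u$ and that of the trace $v|_{y=0}$. The key leverage is that $K_L$ is smooth with compact support, so $K_L*u$ is smooth in $x$ with bounds depending only on $\|u\|_\infty$; reading the Duhamel formula for $u$, the spatial regularity of $u$ is therefore limited \emph{solely} by that of $v|_{y=0}$, and not by the merely bounded operator $\J$. One then runs a bootstrap from the $L^\infty$ bounds: a De Giorgi--Nash--Moser / boundary-Harnack argument for the Robin problem gives $v\in C^{\alpha}$ up to $\{y=0\}$ in the parabolic metric, whence $u\in C^{\alpha}$ from the line equation; feeding $\mu u$ as Robin datum into parabolic Schauder estimates raises $v$ to $C^{1+\alpha}$ up to the boundary, hence $v|_{y=0}$ and then $u$ to $C^{1+\alpha}$ in $x$; a final Schauder step with a $C^{1+\alpha}$ boundary datum produces $v\in C^{2+\alpha,\,1+\alpha/2}$ up to the boundary and, through the line equation, the bounded H\"older derivatives $\partial_t u$ and $\partial_{xx}u$. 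Running these estimates on unit time-windows and using the uniform $L^\infty$ control makes all constants independent of $t$, so that the bounds are global rather than merely local in time.

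To obtain the regularity of $(S,I,T)$ I would differentiate \eqref{e2.4} once more in time: since $f$ is smooth and the source $I_0$ is time-independent, $\partial_t v$ and $\partial_t u$ solve a coupled problem of the same type (with reaction coefficient $f'(v)$ and the same bounded operator $\J$), so the previous bootstrap applies again and supplies the extra order of regularity needed for $I=\partial_t v$, $T=\partial_t u$ and $S=S_0e^{-\beta v}$. The principal difficulty throughout is the up-to-the-boundary bootstrap just described --- disentangling the intertwined regularities of $u$ and $v|_{y=0}$ across the nonstandard Robin-plus-nonlocal coupling while keeping every estimate uniform in time. This is presumably the mechanism behind the \emph{unexpected regularity} announced in the introduction: although $\J$ is only a bounded operator, the solution on the line turns out to be exactly as smooth as the trace of the field.
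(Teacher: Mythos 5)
Your proposal is correct in substance but follows a genuinely different route from the paper at the crucial step, namely the uniform-in-time derivative bounds. For existence, the paper does not run a fixed-point argument on the nonlocal system directly: it regularises by adding $\e\partial_{xx}$ on the line (system \eqref{e3.1000}), invokes the well-posedness of the local road--field model from \cite{BRR2}, proves all estimates uniformly in $\e$, and passes to the limit; your Duhamel/contraction argument exploiting the boundedness of $\J$ on $C_b(\RR)$ is an equally standard and somewhat more direct alternative. The real divergence is in the regularity proof. The paper differentiates the system in $x$, bounds $\partial_x v^\e$ for $y\geq\ell$ by interior parabolic estimates, and then closes the estimate near the line by the maximum principle in a \emph{narrow strip} $0<y<\ell$: the explicit pair $(1,\frac\mu\nu\cos(\pi y/4\ell))$ is a strict supersolution of the linearised system \eqref{e3.1002} once $\pi^2/16\ell^2>\max f'$, and comparison in the strip bounds $\partial_x u^\e$ and $\partial_x v^\e$ simultaneously, with no compatibility issues at $t=0$ and with immediate iteration to all $x$-derivatives. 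You instead run a boundary-regularity bootstrap: Krylov--Safonov/De Giorgi for the oblique-derivative problem gives $v\in C^\alpha$ up to $\{y=0\}$ from $L^\infty$ data alone, the Duhamel formula on the line transfers this to $u$ (correctly observing that $K_L\ast u$ costs nothing since $K_L$ is smooth), and alternating Schauder steps climb to $C^{2+\alpha}$; the bootstrap is genuinely progressive (each step consumes only the previous step's output, with constants uniform on unit time windows by translation invariance), so uniformity in $t$ does hold. What the paper's route buys is an elementary, self-contained supersolution argument that works from $t=0$ for arbitrary smooth $v_0$; what yours buys is a conceptually transparent explanation of the regularity transfer (``the line is exactly as smooth as the trace of the field''), at the price of needing the full strength of boundary H\"older and Schauder theory for inhomogeneous Robin data (cf.\ \cite{Lie}, which the paper also uses, but only \emph{after} $\partial_{xx}u^\e$ is already bounded) and of a separate treatment of the initial layer, since a generic $v_0$ need not satisfy the compatibility condition $-d\partial_y v_0=-\nu v_0$ at $\{t=0,\,y=0\}$. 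Two minor points: the cumulative densities $(u,v)$ of the $SIRT$ model are in fact uniformly bounded (not merely of linear growth), since $f(M)+\|I_0\|_\infty\leq 0$ for $M$ large; and the bounds on $I=\partial_t v$, $T=\partial_t u$ are cleanest obtained, as you do in your last paragraph and as the paper does, by differentiating the cumulative system in time and rerunning the derivative estimates, rather than by a separate comparison on the $(I,T)$ subsystem.
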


 The existence proof proceeds from fairly usual arguments. 
% \nota{j'ai~limité aux derivées secondes car
% deriver le probleme p/r~\`a~$y$ etait un peu longue~à justifier}
Less standard is the uniform bound of the derivatives for system \eqref{e1.2} and its 
compact perturbation~\eqref{e2.4},
as the line has no particular smoothing effect.
It is obtained as a consequence of the maximum principle in narrow domains applied to 
the equations satisfied by the derivatives.
We point out that the the first-order estimates for~\eqref{e2.4} are essential for us because
the functions $T,I$ of the original model~\eqref{e1.1} 
correspond~to the time-derivatives of $u,v$.
Theorem \ref{cauchy} is proved in Section \ref{s2bis}.

%-----------------------------------------------------------------------------------

\subsection{Biological invasions: steady states and propagation}\label{sec:bio}
 
Let us focus on the model for biological invasions introduced in Section~\ref{sec:Bio}, under the standing assumptions
made there.
 The first issue to understand is the classification of the~steady states of the systems and their attractiveness. 
%Let us start from the model for biological invasions. 
There is an obvious 
positive solution, and the game is to show that it is globally~attractive.
\begin{Prop}
\label{t2.2}
The unique non-negative, bounded steady solutions for \eqref{e1.2} are the constant ones
$(u\equiv0,v\equiv0)$ and $(u\equiv\frac\nu\mu,v\equiv1)$. 

Moreover, any solution  $(u(t,x),v(t,x,y))$ to~\eqref{e1.2},\eqref{e2.100} 
with  $v_0\geq0,\not\equiv0$ smooth and compactly supported, converges to $(\frac\nu\mu,1)$ as $t\to+\infty$, 
locally uniformly in $x\in\RR$,~$y\geq0$.
\end{Prop}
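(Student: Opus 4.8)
The plan is to split the statement into two parts: the classification of bounded nonnegative steady states, and the global attractiveness of $(\frac\nu\mu,1)$.

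For the classification, I would characterise steady states as solutions of the elliptic system obtained by dropping the time derivatives in \eqref{e1.2}. The first step is to establish a uniform a priori upper bound: using the KPP structure \eqref{KPP} (with $f<0$ for $s>1$) together with the coupling on the road, I expect to show by a sliding/maximum-principle argument that any bounded nonnegative steady state satisfies $v\leq1$ and $u\leq\frac\nu\mu$. The mass-preserving, nonnegative kernel $K_L$ makes $\J$ a dissipative operator that obeys a maximum principle, which is what allows the comparison arguments on the road to go through despite the nonlocality. Next I would rule out any nonconstant solution. The natural route is a Liouville-type argument: since $f'(0)>0$, the trivial state $(0,0)$ is linearly unstable, so a nontrivial bounded solution must be bounded away from zero; then, exploiting that $v$ is harmonic-type in the half-plane with a Robin-type condition on the line and that $\J$ kills constants, I would argue that $v$ must be independent of $x$ and $y$, forcing $v\equiv1$ by \eqref{KPP} and then $u\equiv\frac\nu\mu$ from the road equation. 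I anticipate that the paper has a dedicated Liouville-type result (alluded to in the outline) that can be invoked here.

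For the attractiveness, the strategy is a squeezing argument between a subsolution and a supersolution, both converging to $(\frac\nu\mu,1)$. The supersolution comes from the solution of \eqref{e1.2} with a large constant initial datum: by the comparison principle (which holds for this cooperative-type coupled system thanks to the sign structure of the exchange terms $\mu u-\nu v$ and $\nu v-\mu u$ and the monotonicity of $f$ near the steady states), this solution decreases in time and its limit is a steady state above the initial data, which by the classification must be $(\frac\nu\mu,1)$. The subsolution is the delicate part: starting from a small compactly supported bump, I would build a compactly supported stationary-in-shape subsolution using the instability of $(0,0)$—the principal eigenvalue of the linearised operator around zero is positive on large enough balls—so the solution grows and, by the local-in-space convergence result announced in the outline, settles to a nonzero steady state, again pinned to $(\frac\nu\mu,1)$ by classification. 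The parabolic regularity furnished by Theorem \ref{cauchy} guarantees the compactness needed to pass to the limit along time sequences.

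The main obstacle I foresee is the Liouville step establishing that no nonconstant bounded steady state exists. The nonlocal operator $\J$ on the line prevents a direct use of the strong maximum principle and of sliding in the $x$-variable in the usual form, and the half-plane geometry with the Robin-type boundary coupling complicates separation-of-variables reasoning. I would address this by a combination of: (i) translation/sliding in $x$ exploiting that $\J$ is translation-invariant and order-preserving, to show the state is $x$-independent; and (ii) reducing the $x$-independent problem to an ODE-type analysis in $y$ on the half-line coupled to a scalar balance on the line, where the constant solutions are readily classified. Getting the sliding method to cooperate with the integral operator, rather than a differential one, is where the real care is required.
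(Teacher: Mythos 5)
Your overall toolbox matches the paper's: the comparison principle for the monotone system, the small compactly supported bump built from the principal Dirichlet eigenfunction of $-\Delta$ on a large ball (placed away from the road so the exchange terms are not seen), the large constant supersolution $M(\nu/\mu,1)$, monotonicity in time of the two resulting solutions, and passage to the limit using the uniform bounds of Theorem~\ref{cauchy}. The attractiveness part of your plan is essentially the paper's argument. The gap is in how you order the logic and, specifically, in the Liouville step that your attractiveness argument relies on. You propose to classify \emph{arbitrary} bounded nonnegative steady states first, by (a) showing a nontrivial one is bounded away from zero via linear instability of $(0,0)$, and (b) proving $x$-independence by sliding. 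Neither step is secured as stated: linear instability of $(0,0)$ does not by itself give a uniform positive lower bound on an arbitrary steady state (you only get $v>0$ pointwise from the strong maximum principle), and a sliding argument on an arbitrary steady state has no starting point --- to compare $v(\cdot+h,\cdot)$ with $v$ you need an initial ordering for some range of $h$, which you cannot produce without knowing the behaviour of $v$ at $x\to\pm\infty$. You correctly identify this as the main obstacle, but you do not resolve it.

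The paper's resolution is to invert the logic: it does \emph{not} classify general steady states directly. It shows that the two monotone-in-time solutions $(\underline u,\underline v)$ and $(\overline u,\overline v)$, emerging from $(0,\delta\tilde\phi)$ and $M(\nu/\mu,1)$, converge to steady states $(\underline u_\infty,\underline v_\infty)\leq(\overline u_\infty,\overline v_\infty)$, and then proves these two coincide. The $x$-independence of $\overline v_\infty$ is free (the initial datum is constant); the $x$-independence of $\underline v_\infty$ is obtained by a \emph{dynamic} sliding: since $\delta\tilde\phi<\underline v_\infty$ with strict inequality on a compact set, one has $\delta\tilde\phi(\cdot+h,\cdot)\leq\underline v_\infty$ for all $h$ in a symmetric interval $[-H,H]$, and comparing the flows started from these translated bumps gives $\underline v_\infty(\cdot+h,\cdot)\leq\underline v_\infty$ for all such $h$, hence $x$-independence. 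Once both limits are $x$-independent the term $\J u$ vanishes identically and the one-dimensional result of \cite{BRR3} applies, giving equality. The Liouville classification then follows \emph{a posteriori}: any nontrivial bounded steady state has $v>0$, hence is squeezed between $(0,\delta\tilde\phi)$ and $M(\nu/\mu,1)$ for suitable $\delta,M$, hence between the two monotone flows, hence equals their common limit $(\nu/\mu,1)$. If you reorganise your proof along these lines --- slide on the specific limit state using the translated initial bumps, and deduce the classification from the squeeze rather than feeding it into the squeeze --- your argument closes.
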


Proposition \ref{t2.2} is proved in Section \ref{Liouville} following the same scheme as for the 
model with local diffusion on the road considered in~\cite{BRR2}. The arguments require uniform regularity
of the solutions, which is guaranteed here by Theorem~\ref{cauchy}.
In the realm of biological invasions,
the statement about the long-time behaviour 
of the solution is known as the ``hair trigger effect'' 
(the terminology dates back to Aronson-Weinberger~\cite{AW}).

%\subsection{Propagation in the model for biological invasions directed by the line}\label{sec:speedRF}
Once the local behaviour of the solution is established, one naturally turns to the study of the propagation.
For the classical Fisher-KPP equation 
\begin{equation}
\label{e2.10}
v_t-d\Delta v=f(v),\ \ \ t>0,\ X\in\RR^N,
\end{equation}
propagation occurs with an {\em asymptotic spreading speed} $\cK$, which is explicit: 
$\cK=2\sqrt{df'(0)}$, see Aronson-Weinberger \cite{AW}. 
The next result identifies an asymptotic spreading speed for~\eqref{e1.2}. 
%similarly to what happens in the case of a local diffusion
%considered in~\cite{BRR2}.
\begin{Th}\label{t2.1}
 Let $(u(t,x),v(t,x,y))$ be the solution to~\eqref{e1.2},\eqref{e2.100} 
 with  $v_0\geq0,\not\equiv0$ smooth and compactly supported. 
	Then, there exists $c_*>0$ such that, for all $\e>0$, it holds
\begin{equation}\label{spreading}
\sup_{\vert x\vert\leq(c_*-\e)t}\big|(u(t,x),v(t,x,y))-
		\Big(\frac\nu\mu,1\Big)\big|\to0,\ \ 
%		\qquad
%		\lim_{t\to+\infty}\Big(\max_{\vert x\vert\leq(\cSIRT-\e)t}|v(t,x,y)-v_\infty(x,y)|\Big)=0,
		\sup_{\vert x\vert\geq(c_*+\e)t}\big|(u(t,x),v(t,x,y))\big|\to0,
\end{equation}
		as ${t\to+\infty}$, locally uniformly with respect to $y\geq0$.
		
		In addition, there is a quantity $D_*>0$ such that the spreading speed $c_*$ satisfies
		$$c_*\begin{cases} = \cK & \text{if }D\leq D_*\\
							> \cK & \text{if }D>D_*
				\end{cases}
				\qquad\text{with }\;\cK:=2\sqrt{df'(0)}.$$	

Finally, $c_*/\sqrt{DL^2}$ converges to a positive constant as $DL^2\to+\infty$.
\end{Th}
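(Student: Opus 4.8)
The plan is to reduce the computation of $c_*$ to the analysis of the linearised dispersion relation and then to identify, in the regime $DL^2\to+\infty$, an \emph{effective local diffusivity} on the road. First I would linearise \eqref{e1.2} about $(0,0)$ and look for solutions of the form $e^{\sigma t-\lambda x}(\bar u,\psi(y))$. Plugging into the bulk equation forces $\psi(y)=\psi(0)e^{-\gamma y}$ with $\gamma=\sqrt{(\sigma-d\lambda^2-f'(0))/d}$, while the nonlocal operator acts on $e^{-\lambda x}$ as multiplication by $\kappa(\lambda L)$, where
\[
\kappa(\theta):=\int_{\RR}K(s)\big(\cosh(\theta s)-1\big)\,ds ,
\]
owing to the evenness of $K$ and the scaling $K_L(x)=\tfrac1L K(x/L)$. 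The two boundary conditions then yield the dispersion relation $(\sigma-D\kappa(\lambda L)+\mu)(d\gamma+\nu)=\mu\nu$, whose largest root $\sigma(\lambda)$ gives, through the standard Fisher-KPP characterisation underlying these road-field systems, $c_*=\min_{\lambda>0}\sigma(\lambda)/\lambda$.

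The heart of the matter is the long-wave behaviour of the road symbol. Since $K$ is even with support in $[-1,1]$, a Taylor expansion gives $\kappa(\theta)=\tfrac{m_2}{2}\theta^2+O(\theta^4)$ as $\theta\to0$, with $m_2:=\int_{\RR}s^2K(s)\,ds>0$; hence $D\kappa(\lambda L)\approx D_{\mathrm{eff}}\lambda^2$ with $D_{\mathrm{eff}}:=\tfrac{m_2}{2}DL^2$. In words, on wavelengths long compared with the jump length $L$ the nonlocal road behaves like a \emph{local} road of diffusivity $D_{\mathrm{eff}}$. The convexity inequality $\cosh\xi-1\ge\xi^2/2$ makes this precise on the lower side: $\kappa(\theta)\ge\tfrac{m_2}{2}\theta^2$ for all $\theta$, so by monotonicity of the dispersion relation in the road symbol the speed of \eqref{e1.2} dominates that of the corresponding local road-field model with diffusivity $D_{\mathrm{eff}}$.

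I would then invoke the large-diffusion asymptotics for the local model from \cite{BRR2}: its spreading speed $c_*^{\mathrm{loc}}(\tilde D)$ satisfies $c_*^{\mathrm{loc}}(\tilde D)/\sqrt{\tilde D}\to c_0$ for an explicit $c_0>0$ determined by the reduced dispersion relation obtained on setting $\lambda=a/\sqrt{\tilde D}$, $\sigma=a\,c$ and letting $\tilde D\to+\infty$. Combined with the lower comparison this gives $\liminf c_*/\sqrt{DL^2}\ge c_0\sqrt{m_2/2}$. For the matching upper bound one replaces $\kappa(\lambda L)$ by $\tfrac{m_2}{2}(\lambda L)^2$ up to the quartic remainder, using the pointwise control $\tfrac{m_2}{2}\theta^2\le\kappa(\theta)\le\tfrac{m_2}{2}\theta^2+C\theta^4$ valid for $\theta$ bounded; the crux is that the minimiser $\lambda_*$ of $\sigma(\lambda)/\lambda$ scales like $\lambda_*\sim 1/\sqrt{D_{\mathrm{eff}}}$, so $\lambda_* L\to0$ and the remainder $O((\lambda_* L)^4)$ is negligible relative to $(\lambda_* L)^2$. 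Sandwiching the two bounds yields $c_*/\sqrt{DL^2}\to c_0\sqrt{m_2/2}$, a positive constant.

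The main obstacle is precisely this localisation of the minimiser, that is, showing that the relevant front width $1/\lambda_*\sim\sqrt{D_{\mathrm{eff}}}$ is large compared with the jump length $L$, so that the long-wave reduction is legitimate and the exponential growth of $\kappa(\theta)$ for $\theta$ bounded away from $0$ cannot produce a smaller value of $\sigma(\lambda)/\lambda$. This is delicate because the field diffusion is isotropic with its own length scale, incommensurate with $L$, so the reduction is not a single clean rescaling of the full system. To turn the formal dispersion analysis into genuine spreading estimates, I would transfer the two-sided bounds to the level of the PDE by constructing exponential super- and sub-solutions of \eqref{e1.2} calibrated to the scale $\sqrt{DL^2}$, and then feed them into the spreading machinery behind Theorem~\ref{t2.1}, the uniform regularity of Theorem~\ref{cauchy} providing the compactness needed to pass to the limit.
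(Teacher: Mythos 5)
Your linearisation is the right starting point and reproduces the paper's dispersion relation: with $\sigma=ca$ your system is exactly \eqref{e4.3}, and your $\kappa(\lambda L)$ is the paper's $\vp_L(\lambda)$. Your route to the $DL^2\to+\infty$ asymptotics — bracketing the road symbol by $\tfrac{m_2}{2}\theta^2\leq\kappa(\theta)\leq\tfrac{m_2}{2}\theta^2+C\theta^4$ and comparing with the local road--field model of effective diffusivity $D_{\mathrm{eff}}=\tfrac{m_2}{2}DL^2$ — is a legitimate alternative to the paper's direct rescaling of \eqref{e4.3}, and you correctly identify the crux as the localisation of the minimiser ($\lambda_*L\to0$); the paper settles this via the convexity estimate leading to \eqref{ainftyto0}. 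But you flag that step as ``delicate'' without proving it, and without it the quartic remainder could in principle be beaten by the exponential growth of $\kappa$, so the upper bound is not yet closed.

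There are two more substantial gaps. First, the threshold dichotomy ($c_*=\cK$ for $D\leq D_*$, $c_*>\cK$ for $D>D_*$) is part of the statement and is nowhere addressed in your proposal; in the paper it comes out of the geometric analysis of \eqref{e4.3} (whether the degenerate circle $\{(\cK/(2d),0)\}$ already lies in the admissible region, condition \eqref{c*=cK}, which translates into the explicit formula \eqref{D*}). Second, and more seriously, the identity $c_*=\min_{\lambda>0}\sigma(\lambda)/\lambda$ is not ``standard'' for this system: it is precisely what must be proved, and the hard half is the lower spreading bound, which requires compactly supported (generalised) subsolutions travelling at speeds up to $c_*$. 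For a nonlocal road this is where the real work lies: the paper builds sign-changing complex plane waves for a penalised problem (Dirichlet truncation at $y=Y$, reaction $f'(0)-\delta$), locates complex roots via Rouch\'e's theorem, takes real parts, and then truncates the support — and the truncation must be checked against $\J$, since cutting off a function changes $\J u$ everywhere within range $L$; the argument only works because the discarded part of the oscillating wave is nonpositive there, so $\J u_c\geq\J\tilde u_c$ on the retained component. Your closing sentence delegates all of this to ``the spreading machinery behind Theorem~\ref{t2.1}'', which is circular as written and, even read charitably as invoking the local-road machinery of \cite{BRR2}, skips the one step where the nonlocality genuinely changes the proof. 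Finally, note that exponential subsolutions alone cannot initiate spreading from compactly supported data; you need the truncated ones together with the hair-trigger effect of Proposition~\ref{t2.2} to conclude.
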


\blue{The spreading speed $c_*$ is characterised by an implicit formula involving eigenvalues
of the operator $\J$ associated with exponential functions (cf. \eqref{e3.2} and \eqref{e4.3} below)}. 
 When the diffusion on the line is $D\partial_{xx}$, we derived
 the same type of threshold result in~\cite{BRR2}, with $D_*=2d$.
 \blue{Let us mention that this result holds true
 	in the case of a curved road which is asymptotically straight in the two directions,
 	see~\cite{curved_road}.}
 In the present case, the threshold $D_*$ is given by the non-algebraic formula~\eqref{D*} below. 
 Let us emphasise that, in contrast with the local case, it depends not
 only on $d$, but also on $f'(0)$ as well as on the 
 nonlocal kernel~$\mc{J}$ and in particular on its range $L$.
 
 \blue{
 We expect that the same threshold should hold for the $SIRT$ system in the case of a curved road which is asymptotically straight in the two directions.
 Likewise, we believe that one should be able to establish an analogous threshold for a periodic curved road. In this direction, Giletti, Monsaingeon and Zhou \cite{GMZ} had considered the case of periodic exchange coefficients (on a straight line) in the framework of the Fisher-KPP equation.
 The understanding of the influence of roads of more general shape is fully open.
 }
 
 The proof of Theorem~\ref{t2.1} uses the fact that~$\mc{J}$ has a compactly supported kernel.
 We expect the same type of result to hold for kernels decaying sufficiently fast at infinity, but 
 to prove this one should dwell deeper into the arguments developed in the present paper.
The last statement  of the theorem says that, for large~$D$ and $L$,
 the propagation in the horizontal direction really occurs as if
the diffusion in the field were also given by the nonlocal operator~$\mathcal{J}$, 
the speed in that case being given by the formula~\eqref{cL} below.
%This will be substantiated by the formulas \eqref{cL} and \eqref{c*DL}.
Theorem~\ref{t2.1} is proved in Section~\ref{spreadingspeed}.

%-----------------------------------------------------------------------------------

\subsection{Further properties of the $SIRT$ model with nonlocal diffusion}\label{sec:speedSIRT}

In the case of the Model~\eqref{e1.1} for propagation of epidemics, 
the existence of steady solutions has to be examined at the level of cumulative densities,
which satisfy system~\eqref{e2.4} \blue{with the nonlinearity $f$ defined by~\eqref{f(v)}.
Such a nonlinearity vanishes at $0$ and
it is concave.
% hence it fulfils $f(s)\leq f'(0)s$ for $s>0$.}
%which is the second part of the
%Fisher-KPP condition~\eqref{KPP}, that we assumed in the model for biological invasions.
%However, the first condition in~\eqref{KPP} may or may not be fulfilled now, depending on the sign
%of $f'(0)$. More precisely,} we compute
Let us set}
\begin{equation}\label{R0}
	f'(0)=\alpha(R_0-1),\quad\text{where }\;R_0:=\frac{S_0\beta}\alpha.
\end{equation} 
%Hence, % $f'(0)>0$ if and only if $R_0>1$. In that case $f$ has a unique positive zero, that we call $v_*$.
If $R_0>1$, it holds that $f'(0)>0$. In particular, when $R_0>1$, 
 the function $f$ has a unique positive zero, that we call $v_*$, and thus
%Therefore, when $R_0>1$, 
%the function $f$ 
it fulfils both conditions in~\eqref{KPP} with ``$1$'' replaced by $v_*>0$. 
%\blue{Hence, when $R_0>1$, the system reduces exactly to 
%Model~\eqref{e1.2}, up to rescaling.} 
The quantity $R_0$ can be viewed as the classical {\em basic reproduction number},
see for instance~\cite{Ka}. 

System \eqref{e2.4} is nothing else than~\eqref{e1.2} with the additional source term~$I_0$ in the first equation.
The Liouville-type
result and the stability of the unique positive steady solution hold true for this new system.
However, since $I_0$ is non-constant, 
the positive steady solution is in this case nontrivial.
It tends to the constant steady solution to~\eqref{e1.2} at infinity, with a given 
exponential decay, as stated in the following theorem.

\begin{Th}\label{thm:ltbSIRT}
	%	Assume that $R_0\neq1$.
	%	and call 
	%	$$v_*:=		
	%	\begin{cases} 
	%	0 \,\text{ if }R_0<1\\
	%	\text{the positive zero of $f$ \,if }R_0>1.
	%	\end{cases}
	%	$$
	%	Then the stationary solution to the problem without the road~\eqref{e2.2} satisfies
	%	$$\lim_{|X|\to\infty}\frac{\log(v_\infty(X)-v_*)}{|X|}=-\sqrt{\frac{-f'(v_*)}d},$$
	%
%	In the case $R_0<1$, the bounded steady solution $(u_\infty^r(x),v_\infty^r(x,y))$ of the problem~\eqref{e2.4}-\eqref{I0} 
%	satisfies
%	$$u_\infty^r(x)=
%	\begin{cases}
%	0 & \hspace{-7pt}\text{if }R_0\leq1\\
%	\frac\nu\mu\, v_* & \hspace{-7pt}\text{if }R_0>1
%	\end{cases}
%	\;+\,
%	e^{-\kappa(x)|x|},\qquad
%	v_\infty^r(x,y)=
%	\begin{cases}
%	0 & \text{if }R_0\leq1\\
%	v_* & \text{if }R_0>1
%	\end{cases}
%	\;+\,
%e^{-\lambda(x,y)|x|},$$
%	where $R_0=\frac{S_0\beta}\alpha$, $v_*$ is the unique positive solution of $f(v_*)=0$ and 
%	with
%	$$\lim_{|x|\to\infty}\kappa(x)=\lim_{|x|\to\infty}\lambda(x,y)
%%	\sqrt{\frac{\alpha(1-R_0)}d}
%	=a_* %\in(0,\sqrt{\frac{\alpha}d(1-R_0)})
%	\quad\text{locally uniformly in $y\geq0$},$$
%	where $0<a_*<\sqrt{\frac{\alpha}d(1-R_0)}$.
%	with $a_*=0$ if $R_0=1$, and
%	$$0<a_*<	
%	\begin{cases}
%	\sqrt{\frac{-f'(0)}d} & \text{if }R_0<1\\
%	\sqrt{\frac{-f'(v_*)}d} & \text{if }R_0>1.
%	\end{cases}
%	$$
%	%	with 
%	%	\begin{equation}
%		\label{a*}
%	%	0<a_*<\sqrt{\frac{-f'(v_*)}d}.
%	%	\end{equation}
%	
%	Moreover, the solution $(u,v)$ to~\eqref{e2.4}-\eqref{IDuv}
%		converges to $(u_\infty^r,v_\infty^r)$
%		as $t\to+\infty$, locally uniformly in $x\in\RR$, $y\geq0$.
The problem~\eqref{e2.4}-\eqref{I0} has a unique non-negative, bounded steady solution $(u_\infty^r(x),v_\infty^r(x,y))$. 
	Such a solution satisfies
	%	$$\lim_{|x|\to\infty}\frac{\log(v_\infty^r(x,y)-v_*)}{|x|}=-a_*,
	%	\quad\text{locally uniformly in }\, y,$$
	$$u_\infty^r(x)=
	\begin{cases}
	0 & \hspace{-7pt}\text{if }R_0<1\\
	\frac\nu\mu\, v_* & \hspace{-7pt}\text{if }R_0>1
	\end{cases}
	\;+\,e^{-\kappa(x)|x|},\qquad
	v_\infty^r(x,y)=
	\begin{cases}
	0 & \text{if }R_0<1\\
	v_* & \text{if }R_0>1
	\end{cases}
	\;+\,e^{-\lambda(x,y)|x|},$$
	where, in the case $R_0>1$, 
	%	$R_0=\frac{S_0\beta}\alpha$, 
		$v_*$ is the unique positive zero of $f=0$, and % the functions 
		$\kappa,\lambda$ fulfil  
	$$\lim_{|x|\to\infty}\kappa(x)=\lim_{|x|\to\infty}\lambda(x,y)=
	a_*,\qquad
	\text{locally uniformly in $y\geq0$},
	$$
	with
	$$0<a_*<	
	\begin{cases}
	\sqrt{\frac{\alpha}d(1-R_0)} & \text{if }R_0<1\\
	\sqrt{\frac{-f'(v_*)}d} & \text{if }R_0>1.
	\end{cases}
	$$
	
		Moreover, the solution $(u,v)$ to~\eqref{e2.4}-\eqref{IDuv}
			converges to $(u_\infty^r,v_\infty^r)$
			as $t\to+\infty$, locally uniformly in $x\in\RR$, $y\geq0$.
\end{Th}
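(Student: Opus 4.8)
The plan is to prove Theorem~\ref{thm:ltbSIRT} in three parts: existence and uniqueness of the steady state, its exponential decay to the constant limit at spatial infinity, and the long-time convergence of the time-dependent solution.

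\textbf{Existence, uniqueness, and identification of the limits.} First I would construct the steady solution $(u_\infty^r, v_\infty^r)$ of~\eqref{e2.4} by a monotone iteration scheme. The natural approach is to use the time-dependent solution $(u,v)$ to~\eqref{e2.4}-\eqref{IDuv}: since the source term $I_0\geq0$ and the initial datum is zero, the solution is increasing in time by the parabolic comparison principle applied to the $\partial_t$-shifted solution, and the regularity from Theorem~\ref{cauchy} together with a uniform bound (obtained by comparison with the spatially-constant supersolution solving the ODE system associated with~\eqref{e2.4}) forces monotone convergence as $t\to+\infty$ to a bounded steady state. This simultaneously gives existence and shows that the limit of the Cauchy problem \emph{is} a steady state, which will later yield the convergence statement. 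Uniqueness follows from a sliding/sweeping argument adapting the Liouville-type result already available for~\eqref{e1.2} (Proposition~\ref{t2.2}): away from the support of $I_0$ the equation is exactly~\eqref{e1.2}, so the steady state must lie between the trivial and the positive constant states, and a comparison argument pins it down. The identification of the constants $0$ (if $R_0<1$, where $f'(0)\leq0$ and $f$ has no positive zero) versus $(\frac\nu\mu v_*, v_*)$ (if $R_0>1$) at infinity comes from the sign of $f'(0)=\alpha(R_0-1)$ via~\eqref{R0} and the hair-trigger effect for the underlying KPP system.

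\textbf{Exponential decay at infinity.} For the decay rate, I would linearise~\eqref{e1.2} about the relevant constant state. Writing the perturbation $w = v - v_*$ (or $w=v$ when $R_0<1$) and seeking solutions of the form $e^{-a|x|}$, the linearised half-plane problem with the nonlocal exchange boundary condition leads to a dispersion/characteristic relation coupling the decay rate $a$ in the field to the nonlocal operator $\J$ on the line, precisely the eigenvalue relation alluded to in the remark after Theorem~\ref{t2.1} (cf.\ \eqref{e3.2}). The candidate rate $a_*$ is the smallest positive root of this relation, and the strict inequalities $a_*<\sqrt{\alpha(1-R_0)/d}$ (resp. $a_*<\sqrt{-f'(v_*)/d}$) reflect that the coupling with the line strictly slows the spatial decay compared to the pure field rate $\sqrt{-f'/d}$; these bounds are proved by analysing the characteristic equation and showing the line term shifts the relevant root. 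I would then build exponential super- and subsolutions of the form $(\text{const}\pm Ce^{-a_\pm|x|})$ with $a_-<a_*<a_+$, and squeeze $u_\infty^r, v_\infty^r$ between them to obtain $\kappa(x),\lambda(x,y)\to a_*$, handling the local uniformity in $y$ through the explicit $y$-dependence of the exponential profile solving the linearised field equation.

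\textbf{Long-time convergence.} The convergence of $(u,v)$ to $(u_\infty^r, v_\infty^r)$ follows almost directly from the monotone-in-time construction above: the solution increases to a steady state, which by the uniqueness part must equal $(u_\infty^r, v_\infty^r)$, and Dini's theorem upgrades the pointwise monotone convergence to local uniform convergence given the continuity supplied by Theorem~\ref{cauchy}.

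\textbf{Main obstacle.} I expect the hardest step to be the exponential decay with the sharp strict inequalities on $a_*$, because the boundary condition is nonlocal: the characteristic relation is not algebraic (it involves the Fourier/Laplace symbol of $\J$, hence $L$), so proving existence of a unique relevant root and controlling its location relative to $\sqrt{-f'/d}$ requires a careful monotonicity analysis of a transcendental function rather than solving a quadratic as in the local $D\partial_{xx}$ case. Constructing sub/supersolutions that are genuinely super/subsolutions \emph{across} the boundary condition (not merely in the interior) is the delicate point, since the nonlocal term sees values of $u$ over a whole $L$-neighbourhood.
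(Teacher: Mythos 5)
Your overall architecture coincides with the paper's: the steady state is obtained as the increasing-in-time limit of the solution starting from $(0,0)$ (sandwiched below the constant supersolution $M(\nu/\mu,1)$, with the uniform regularity of Theorem~\ref{cauchy} supplying compactness), the decay rate $a_*$ comes from exponential plane waves $e^{-a|x|}(1,\gamma e^{-by})$ of the linearised system with zero speed, and the long-time convergence is the monotone limit itself. There is, however, one step that would not close as you describe it: the uniqueness. Reducing to Proposition~\ref{t2.2} ``away from the support of $I_0$'' only identifies the limits at infinity (the paper does this by translating along a diverging sequence $x_n$ and applying Proposition~\ref{t2.2} to the limit system, in which $I_0$ has disappeared); it does not exclude two distinct bounded steady states sharing the same behaviour at infinity but differing near $\supp I_0$. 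The paper's uniqueness proof is a sweeping argument of a specific kind: for $R_0>1$ one sets $k=\max(\sup u_1/u_2,\sup v_1/v_2)$ and shows $k\leq1$ using the \emph{concavity} of $f$ to rule out an interior touching point of $kv_2$ and $v_1$, Hopf's lemma on $\{y=0\}$, and the nonlocal maximum principle~\eqref{MP} on the line; for $R_0\leq1$ the analogous argument runs on $\sup(u_1-u_2)$ and $\sup(v_1-v_2)$, using that $f$ is decreasing on $\RR^+$ and working with maximising sequences and translated limit systems since the suprema need not be attained. If your ``sliding/sweeping'' is meant in this sense you should say which structural property of $f$ makes the touching-point step work, because $f$ is not of KPP type when $R_0<1$ and a bare comparison of ordered solutions does not yield uniqueness.

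On the decay, your plan matches the paper's but your assessment of where the difficulty lies is somewhat inverted. The characteristic system~\eqref{e3.3} is handled geometrically, not by a delicate transcendental analysis: the first equation is the graph of a convex, even function of $a$ vanishing at $0$ with vertical asymptotes at $\pm a^\infty(D,L)$, the second is the circle $a^2+b^2=-f'(0)/d$, and they meet at exactly two points $(\pm a_*,b_*)$ with $b_*>0$; the strict inequality $a_*<\sqrt{-f'(0)/d}$ (resp.\ with $f'(v_*)$ when $R_0>1$) is then immediate from $b_*>0$. The genuinely delicate point is the one you flag only at the end: the lower bound requires subsolutions built from the penalised, $y$-truncated system~\eqref{penalised} for an arbitrary $a>a_*$, compared in a half-strip against a solution descending from $M(\nu/\mu,1)$, and one concludes by letting $a\downarrow a_*$.
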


The proof is presented in Section~\ref{sec:steady}.
As far as the Liouville-type result is concerned, the nonlocal operator $\mc{J}$
does not introduce any substantial difference in the proof,
compared with the $SIRT$ model with local diffusion on the road treated in~\cite{SIRT1}. 
The study of the decay, instead, requires one to understand the structure of 
exponential eigenfunctions for the nonlocal operator.

%
%The existence of steady solutions for~\eqref{e2.4}-\eqref{I0} is proved, 
%here again, with  not so unusual arguments pertaining to nonlinear elliptic equations. 
%

{
Theorem~\ref{thm:ltbSIRT} is a ``pandemic threshold theorem'' 
(as introduced by Kendall in the appendix to Bartlett's paper~\cite{Bart} p. 67)}
%In the following statements,~$(u_\infty^r,v_\infty^r)$ is the unique positive steady state to~\eqref{e2.4},
%provided by Proposition \ref{LiouvilleSIRT}.
%\begin{Th}\label{thm:ltb-SIRT}
%
%\end{Th}
which exhibits two opposite scenarios according to whether $R_0$ is below or above the value $1$.
Indeed, since the loss of susceptible individuals at a given location $(x,y)$ throughout the whole epidemic course
 is $I_{tot}(x,y):=S_0-S(+\infty,x,y)$,
and one has that $S=S_0 e^{-\beta v}$ by
the second equation in~\eqref{e1.1}, Theorem~\ref{thm:ltbSIRT} yields 
$$I_{tot}(x,y)=\big(1-e^{-\beta v_\infty^r(x,y)}\big),$$ 
whence in particular
$$
\lim_{|(x,y)|\to\infty}I_{tot}(x,y)=\begin{cases} 
0 & \text{if }R_0\leq1\\
S_0\big(1-e^{-\beta v_*}\big) & \text{if }R_0>1.
\end{cases}
$$
This means that the epidemic wave spreads throughout the territory if and only if ${R_0>1}$.
Therefore, Model~\eqref{e1.1} displays the same well-known dichotomy as the classical epidemic models.

%We then perform an analysis of the decay of the steady solution as~$|x|\to\infty$ when $R_0<1$, 
%which is not entirely standard due to the nonlocality of the diffusion on the road. 

The next question is then to determine the speed of propagation of the epidemic in the case $R_0>1$. 
Similarly to what happens when the diffusion of the epidemic on the line is given by the Laplacian,
considered in~\cite{SIRT1},
the speed of propagation on the half-plane can be enhanced by the faster diffusion on the line.
\begin{Th}\label{thm:cSIRT}
	Assume that $R_0>1$. Let $(u,v)$ be the solution to~\eqref{e2.4}-\eqref{IDuv}. 
	Then, there exists $\cSIRT>0$ such that, for all $\e>0$, it holds
	$$\sup_{\vert x\vert\leq(\cSIRT-\e)t}\big|(u(t,x),v(t,x,y))-
		(u_\infty^r(x),v_\infty^r(x,y))\big|\to0,
%		\qquad
%		\lim_{t\to+\infty}\Big(\max_{\vert x\vert\leq(\cSIRT-\e)t}|v(t,x,y)-v_\infty(x,y)|\Big)=0,
		$$
		$$\sup_{\vert x\vert\geq(\cSIRT+\e)t}\big|(u(t,x),v(t,x,y))\big|\to0,
		$$
		as $t\to+\infty$, locally uniformly with respect to $y\geq0$.
		
		In addition, there is a quantity $D_*>0$ such that
		the spreading speed $\cSIRT$ satisfies
		$$\cSIRT\begin{cases} = \cSIR & \text{if }D\leq D_*\\
							> \cSIR & \text{if }D>D_*
				\end{cases}
				\qquad\text{with }\;\cSIR:=2\sqrt{d\alpha(R_0-1)}.$$	
\end{Th}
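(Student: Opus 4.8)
The plan is to treat \eqref{e2.4} as a compactly supported perturbation of the biological-invasion system \eqref{e1.2} and to transfer the spreading analysis of Theorem~\ref{t2.1}. The nonlinearity $f$ in \eqref{f(v)} is concave with $f(0)=0$ and, by \eqref{R0}, satisfies $f'(0)=\alpha(R_0-1)>0$ when $R_0>1$; it has a unique positive zero $v_*$ and obeys the Fisher--KPP bound $0<f(s)\le f'(0)s$ on $(0,v_*)$. Consequently $\cSIR=2\sqrt{d\alpha(R_0-1)}=2\sqrt{df'(0)}$ plays exactly the role of $\cK$ in Theorem~\ref{t2.1}, and I would \emph{define} $\cSIRT$ and the threshold $D_*$ by the very same characterisation used there, now with $f'(0)=\alpha(R_0-1)$ inserted. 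The two structural inputs needed by the proof of Theorem~\ref{t2.1} are (i) local-in-space convergence to the positive steady state, supplied here by Theorem~\ref{thm:ltbSIRT} (the constant state $(\tfrac{\nu}{\mu},1)$ being replaced by $(\tfrac{\nu}{\mu}v_*,v_*)$, and more precisely by the nontrivial steady solution $(u_\infty^r,v_\infty^r)$), and (ii) the uniform regularity of Theorem~\ref{cauchy}. Both are available, so the crux is to show that neither the source $I_0$ nor the nonconstant profile $v_\infty^r$ alters the speed.

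For the \textbf{upper bound} (the second limit in the statement), I would use the KPP inequality $f(v)\le f'(0)v$ together with the compact support of $I_0$ (see \eqref{I0}): outside a fixed ball the pair $(u,v)$ is a subsolution of the linearised, source-free road--field system. I then seek exponential supersolutions $u\sim\rho\,e^{-\lambda(|x|-ct)}$, $v\sim e^{-\lambda(|x|-ct)-\gamma y}$. Insertion into the linearised system yields the field relation $d\gamma^2=\lambda c-d\lambda^2-f'(0)$ and, using that $\J$ acts on $e^{-\lambda x}$ as multiplication by the eigenvalue
\begin{equation*}
P(\lambda):=\int_\RR K_L(z)\big(e^{\lambda z}-1\big)\,dz\ \ge\ 0,
\end{equation*}
a coupling relation through the two boundary conditions of \eqref{e2.4} linking $\rho,\gamma,\lambda,c$. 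For every $c$ above the value $\inf_{\lambda>0}c(\lambda)$ obtained by minimising over admissible decay rates, such a supersolution exists; the comparison principle, valid thanks to the quasimonotone (cooperative) structure of the exchange terms, then forbids faster travel, giving $\cSIRT\le\inf_\lambda c(\lambda)$, with the compactly supported effect of $I_0$ absorbed into a larger multiplicative constant in the tail.

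For the \textbf{lower bound} (the first limit), I would construct compactly supported generalised subsolutions propagating at any speed $c<\cSIRT$, ignited by Theorem~\ref{thm:ltbSIRT}, which ensures that after a large time $(u,v)$ is bounded below by a positive constant on a fixed neighbourhood of the origin. Propagating this bump by comparison, and using that $v_\infty^r\to(\tfrac{\nu}{\mu}v_*,v_*)$ at infinity so that behind the front the target state is asymptotically constant, yields convergence to $(u_\infty^r,v_\infty^r)$ on $\{|x|\le(\cSIRT-\e)t\}$; here $I_0\ge0$ only helps, being a nonnegative source. This lower bound is the main obstacle: genuine compactly supported subsolutions are delicate to build because $\J$ neither preserves compact support nor smooths on the line, so one must work with truncated exponential profiles and control the error produced by the jumps across the truncation. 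This is precisely where the compact support of $K$, hence of $K_L$, is essential, and where the spreading argument of Section~\ref{spreadingspeed} must be adapted rather than quoted verbatim.

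Finally, the \textbf{threshold dichotomy} follows from analysing $\cSIRT=\inf_{\lambda>0}c(\lambda)$ determined by the dispersion relation above. The quantity $\cSIRT$ is the minimal speed admitting an exponential supersolution of the coupled linearised system; for $D\le D_*$ the limiting mechanism is the field's KPP tail (minimiser $\lambda=\sqrt{f'(0)/d}$, $P$ entering subordinately), so $\cSIRT=\cK=\cSIR$, whereas for $D>D_*$ the fast nonlocal spread on the line, carried by the term $DP(\lambda)$, provides a strictly faster admissible front, so $\cSIRT>\cSIR$. Here $D_*$ is the critical value at which the road's fastest admissible speed equals $\cSIR$; since $P(\lambda)>0$ for $\lambda>0$ (with $P(0)=P'(0)=0$ by evenness of $K$ and $P$ increasing thereafter), the dependence on $D$ is strictly monotone and the transition at $D_*$ is sharp. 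Because this characterisation is identical in form to the one defining $c_*$ and $D_*$ in Theorem~\ref{t2.1}, the same threshold $D_*$ governs both models.
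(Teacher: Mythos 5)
Your proposal is correct in substance and shares the paper's overall strategy: identify $\cSIRT$ and $D_*$ with the $c_*$ and $D_*$ of Theorem~\ref{t2.1} for the concave KPP nonlinearity \eqref{f(v)} (so $\cK=\cSIR$), obtain the upper bound from the exponential plane-wave supersolutions of Lemma~\ref{lem:super} (absorbing $I_0$ by a large multiplicative constant), and combine the lower bound with Theorem~\ref{thm:ltbSIRT} and the limit $(u_\infty^r,v_\infty^r)\to(\nu/\mu,1)v_*$ at infinity to convert spreading towards the constant state into convergence to the nontrivial steady state on $\{|x|\le(\cSIRT-\e)t\}$. Where you diverge is the lower bound: you propose to rebuild compactly supported generalised subsolutions for the perturbed system \eqref{e2.4} and flag this as the main obstacle, because $\J$ does not preserve compact support. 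This difficulty in fact evaporates, and the paper exploits exactly that: since $I_0\ge0$, the solution $(u,v)$ of \eqref{e2.4} is a supersolution of the unperturbed system \eqref{e1.2}, so a single application of the comparison principle (starting from time $1$, when $v>0$ by the strong maximum principle) transfers the conclusion of Theorem~\ref{t2.1} verbatim and yields $\liminf_{t\to\infty}\inf_{|x|\le(\cSIRT-\e)t}(u,v)\ge(\nu/\mu,1)v_*$; equivalently, any subsolution of \eqref{e1.2}, in particular those of Lemma~\ref{lem:sub}, is automatically a subsolution of \eqref{e2.4} because adding $I_0\ge0$ to the right-hand side only relaxes the inequality. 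Your route would work but duplicates the construction already carried out in Lemma~\ref{lem:sub} and the Appendix; the monotonicity-in-$I_0$ shortcut is what makes the paper's proof essentially a two-paragraph reduction to Theorems~\ref{t2.1} and~\ref{thm:ltbSIRT}.
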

 The quantity $\cSIR=2\sqrt{d\alpha(R_0-1)}$ is the speed of propagation for the classical $SIR$ model 
 with local diffusion on the infected, without the road.
 The speed $\cSIRT$ and the threshold $D_*$ in the above theorem coincide with the spreading speed $c_*$
 and the~$D_*$
 that are provided by Theorem~\ref{t2.1} in the case where~$f$ is given by~\eqref{f(v)}.
 This is not surprising, because systems~\eqref{e2.4} and~\eqref{e1.2} only differ for
 the compactly supported source term $I_0$, which, as one may expect, does not affect
 the dynamics of the solution far from the origin.
 This is rigorously proved at the end of Section~\ref{sec:steady}.

\blue{
In conclusion, recalling that the quantities $u,v$ 
are the cumulative densities of the compartments $T,I$ of infected individuals, one can retrieve from Theorem \ref{thm:cSIRT}
the long-time behaviour for Model \eqref{e1.1}.
One infers that, when $R_0>1$, the infection spreads in the form 
of an epidemic wave
%, with a peak 
propagating with the asymptotic speed $\cSIRT$. 
%the number of infected at a given location $(x,y)$
%, even quite far away from the epicentre of the outbreak,  
In precise terms, there exist a function $\tau_*\!:\!\RR\!\to\!\RR$, 
a constant $T_*\!>\!0$ 
and a  function $I_*(y)$ locally bounded from below away from $0$, such~that 
$$
T(\tau_*(x),x)\geq T_*, \ \ \  I(\tau_*(x),x,y)\geq I_*(y),
\qquad\forall (x,y)\in\RR^2_+,
$$
and the following limit hold, uniformly in
$x\in\RR$ and locally uniformly in $y\geq0$:
\[\begin{split}
\lim_{t\to+\infty}\big(T(\tau_*(x)+t,x),I(\tau_*(x)+t,x,y)\big)&=(0,0),\\
\di\lim_{\tau_*(x)\geq t\to+\infty}\big(T(\tau_*(x)-t,x),I(\tau_*(x)-t,x,y)\big)
&=(0,0).
\end{split}\]
%the epidemic peak occurs around a time $\tau_*(x)$ which satisfies
Moreover, the function $\tau_*$ satisfies
\begin{equation}
	\label{tau}
	\lim_{\vert x\vert\to+\infty}\frac{\tau_*(x)}{\vert x\vert}=\frac1{\cSIRT}.
\end{equation}
These properties are derived from Theorem \ref{thm:cSIRT} using the regularity in time of $T,I$ (granted by
Theorem~\ref{cauchy}). As the proof does
 not require new ingredients with respect to those in  \cite[Proposition~3.7]{SIRT1}, we only state it here informally and leave the
details to the interested reader.} 

\section{Initial value problems and a-priori bounds}\label{s2bis}
 
The study of system~\eqref{e1.2} was carried out in~\cite{BRR2} in the case where
$\J$ is replaced by~$\partial_{xx}$.
 Problem \eqref{e1.2} is almost linear, the only non-linearity being the harmless 
globally Lipschitz-continuous function~$f$.
Moreover, the system displays a monotonic structure, which yields a comparison principle:
%\footnote{ \ This is shown exactly as in \cite{BRR2} thanks to the fact the nonlocal operator $\J$,
%like $\Delta$, fulfills the 
%following strong maximum principle:
%if $u$ has a global maximum point $x_0$ then $\J u(x_0)\leq0$, and the inequality is strict
%unless $u$ is constant.}:
if $(u_{0}^1(x),v_{0}^1(x,y))\leq (u_{0}^2(x),v_{0}^2(x,y))$ are two ordered, initial data 
%for \eqref{e1.2}, 
then the corresponding solutions $(u^1,v^1)$, $(u^2,v^2)$ satisfy
$$
(u^1(t,x),v^1(t,x,y))\leq(u^2(t,x),v^2(t,x,y)),\qquad\forall t>0,\ (x,y)\in\RR^2_+.
$$
%and moreover if equality holds for some $t>0$ then $(u_1,v_1)\equiv(u_2,v_2)$.
The same holds true if $(u^1,v^1)$ is a sub-solution and $(u^2,v^2)$ is a super-solution, also in the generalised 
sense\,\footnote{A sub-solution (resp.~super-solution) satisfies ``$\leq$'' (resp.~``$\geq$'') 
in the three equations in~\eqref{e1.2}; a {\em generalised} sub-solution (resp.~super-solution) is the maximum (resp.~minimum)
of a finite number of sub-solutions (resp.~super-solutions). We always require (sub,super)solutions to grow at most~exponentially
in space in order to guarantee the classical maximum principle for linear parabolic~equations.}.
One can show the comparison principle following the same arguments as in the proof of \cite[Proposition~3.2]{BRR2},
which hold true when~$\partial_{xx}$ is replaced by~$\J$. Indeed, those arguments only rely on the following 
form of ellipticity condition (which is straightforward to~check): 
\begin{equation}\label{MP}
%\begin{cases}
%\J u(x_0)\leq0\\
u(x)\geq u(x_0) \quad \text{for all }x\in\RR
%\end{cases}
\quad
\implies\quad \J u(x_0)\geq0,
\end{equation}
which is the key for the maximum principle to hold,
as well as on the existence of a smooth function $\chi:\RR\to[0,+\infty)$ such that
$$\|\chi'\|_\infty,\|\chi''\|_\infty<+\infty,\quad
\lim_{x\to\pm\infty}\chi(x)=+\infty,\qquad
|\J\chi|\leq 1$$
(which can easily be constructed after noticing that $|\J\chi|\leq C L\|\chi'\|_\infty$).
The monotonic structure of~\eqref{e1.2} entails that if the initial datum  $v_0$ in \eqref{e2.100} 
is smooth (we have taken $u(0,.)\equiv 0$ for convenience) the comparison principle propagates to the derivatives and
entails {\em exponential in time} a priori bounds for the successive derivatives of $u$ and $v$.
The same is true for Model~\eqref{e1.1}, as a consequence of the fact that the integrated system \eqref{e2.4}
 also enjoys exponential in time 
 a priori bounds for its solutions. Therefore, what we need to
do is to construct solutions to \eqref{e1.2},\eqref{e2.100} and \eqref{e2.4}-\eqref{IDuv}
and to derive 
{\em uniform in time} estimates for their 
successive derivatives.

%Let us introduce some classical notations. For $\alpha\in(0,1)$ we denote by $C^{\alpha/2,\alpha}(\RR_+\times\RR^2_+)$
%(resp. $C^{\alpha/2,\alpha}(\RR_+\times\RR))$ the set of functions $v(t,x,y)$ (resp. $u(t,x)$) such that the H\"older quotients
%$$
%\frac{v(t,x,y)-v(t',x',y')}{\vert t-t'\vert^{\alpha/2}+\vert(x,y)-(x',y')\vert^\alpha}\quad 
%\hbox{(resp. }\ \frac{u(t,x,)-u(t',x')}{\vert t-t'\vert^{\alpha/2}+\vert x-x'\vert^\alpha} \ \hbox{)}
%$$
%are bounded over $(\RR_+\times\RR^2_+)^2$ (resp. $(\RR_+\times\RR)^2$).

\begin{proof}[Proof of Theorem \ref{cauchy}] 
Let us first concentrate on 
the model for biological invasions \eqref{e1.2},\eqref{e2.100}.
We approximate it by adding a small local diffusion $\e\partial_{xx}$, $\e>0$, on the~line:
\begin{equation}
\label{e3.1000}
\left\{
\begin{array}{rll}
\partial_tv-d\Delta v=&f(v)\quad(t>0,\ (x,y)\in\RR_+^2)\\
-d\partial_yv=&\mu u-\nu v\quad(t>0,\ x\in\RR,\ y=0)\\
\partial_t u-\e\partial_{xx}u-D\J u=&\nu v(t,x,0)-\mu u\quad(t>0,\ x\in\RR,\ y=0).
\end{array}
\right.
\end{equation}
By \cite{BRR2}, 
the Cauchy Problem for \eqref{e3.1000},\eqref{e2.100} is well-posed and enjoys the comparison principle.
Let $(u^\e(t,x),v^\e(t,x,y))$ be unique classical solution. 
Since $M(\nu/\mu,1)$ is a supersolution to~\eqref{e3.1000} for any constant $M\geq1$,
taking $M$ larger than $\max(1,\sup v_0)$ one deduces from the comparison principle that
\begin{equation}
\label{e3.1001}
0\leq u^\e(t,x)\leq M\frac \nu\mu,\quad \ 0\leq v^\e(t,x,y)\leq M,\quad
\ \forall\e>0.
\end{equation}
In order to get estimates that are uniform both in $\e$ and in $t$
%, are subsequently obtained by
%successive differentiations of \eqref{e3.1000} with respect to $t$ and $x$
%(because the only nonlinear term is $f$ which is smooth), as well as for 
%composite derivatives of the form $\partial^m_t\partial^n_x\partial^p_yv^\e(t,x,y)$ with $p\geq 1$
%and either $m$ or $n$ larger than 1, and application of the Gronwall lemma. Pure derivatives in $y$ are 
%handled with the exchange condition at the boundary $\{y=0\}$. 
%Finally the initial datum $v_0$ is smooth and compactly supported, then 
% its spatial derivatives are bounded, and the initial datum corresponding to temporal derivatives
% is also smooth and bounded as a result of the equations in~\eqref{e1.2} computed at $t=0$.
%	So, sending $\e$ to 0 produces
%a classical solution of \eqref{e1.2},\eqref{e2.100} with exponential in time bounds for the derivatives.
%
% Let us now deal with uniform in time derivatives.  
%It is sufficient to prove the H\"older continuity of $u$ and its derivatives.
%%a $C^{1+\alpha,\frac{1+\alpha}2}$ bound for $u$, for some $\alpha\in(0,1)$. 
%Indeed, parabolic
%regularity (see e.g.~Theorem~5.3 of Chapter~4 in~\cite{Lad})
%entails that the same estimates hold true for $v$.
%For our purpose, it is sufficient to prove that $\partial_xu$, $\partial_tu$ and $\partial_{tx}u$, $\partial_{xx}u$ are bounded.
we proceed as follows. 
Let us call $(U,V):=(\partial_xu^\e,\partial_xv^\e)$, where we have dropped the $\e$'s to to alleviate the notation.
This pair solves the linear problem
\begin{equation}
\label{e3.1002}
\left\{
\begin{array}{rll}
\partial_tV-d\Delta V=&f'(v^\e)V\quad(t>0,\ (x,y)\in\RR_+^2)\\
-d\partial_yV=&\mu  U-\nu V\quad(t>0,\ x\in\RR,\ y=0)\\
\partial_t U-\e\partial_{xx}U-D\J U=&\nu V(t,x,0)-\mu U\quad(t>0,\ x\in\RR,\ y=0).
\end{array}
\right.
\end{equation}
Pick any $\ell>0$, having in mind that we will require it to be small. 
%The function~$v$ is bounded and~$v_0$ is smooth and compactly supported, 
%therefore, 
By interior parabolic regularity, see e.g.~\cite[Theorem 9.10.1]{Lad}, 
applied to the Fisher-KPP equation in~\eqref{e3.1000} (recall that $f$ is smooth),
for given $\alpha\in(0,1)$ there is $C_\ell>0$ 
such that
$$
\vert V(t,x,y)\vert\leq C_\ell\big(\Vert v^\e\Vert_{L^\infty(\RR_+\times\RR_+^2)}
+\|v_0\|_{C^{2+\alpha}(\RR_+^2)}\big)
=: C_\ell',\qquad \hbox{$\forall t\geq0$, $x\in\RR$, $y\geq\ell$.}
$$
We have that $C_\ell,C_\ell'$ are independent of $\e$ because $v^\e\leq M$.
We introduce the pair
\begin{equation}
\label{e3.1003}
(\bar U(x),\bar V(x,y))=(1,\frac\mu\nu\cos\,\Big(\frac{\pi y}{4\ell}\Big)).
\end{equation}
By direct computation one checks that it satisfies third equations of \eqref{e3.1002} exactly, 
while it is a super-solution to the second one. For the first one we have that
\begin{equation}
\label{barV}
\begin{array}{rll}
-d\Delta \bar V-f'(v^\e)\bar V=&\biggl(\di\frac{\pi^2}{16\ell^2}-f'(v^\e)\biggl)\bar V,
%\geq&\biggl(\di\frac{\pi^2}{32\ell^2}-f'(v)\biggl)M\bar V+\di\frac{M\pi^2}{32\ell^2}\di\min_{[0,\ell]}\bar V,
%\geq&\biggl(\di\frac{\pi^2}{16\ell^2}-f'(v)\biggl),
\end{array}
\end{equation}
which is positive  for $y\in(0,\ell)$ as soon as 
\begin{equation}
\label{e3.1010}
\frac{\pi^2}{16\ell^2}>\max_{[0,M]}f'.
\end{equation}
Therefore, with such a choice of $\ell$, $(\bar U,\bar V)$ is a super-solution to~\eqref{e3.1002} 
and $-(\bar U,\bar V)$ is a sub-solution. Moreover
we~have
$$
\bar V(x,\ell)=\di\frac\mu{\nu\sqrt2}.
$$
Set
$$
M':=\sqrt 2\frac\nu\mu\,\max\big(\Vert \partial_x v_0\Vert_\infty,C_\ell'\big),
$$
whence 
$$\forall t\geq0,\ x\in\RR,\ y\in[0,\ell],\quad
M'\bar V(x,y)\geq M'\bar V(x,\ell)\geq \max\biggl(\Vert \partial_x v_0\Vert_\infty,|V(t,x,\ell)|\biggl).$$
Recall, on the other hand, that $u_0\equiv0\leq M'\bar U$.
We can then apply the comparison principle between $(U,V)$ and $\pm M'(\bar U,\bar V)$
in the strip $\RR\times(0,\ell)$ and derive the following bounds for $U,V$ in their domains of definition: 
$$
|U|\leq M'\bar U,\qquad |V|\leq M'\bar V.
$$
We have thereby shown that the functions $\partial_xu^\e(t,x),\partial_xv^\e(t,x,y)$
are bounded uniformly in $\e>0$, $t\geq0$, $x\in\RR$, $y\geq0$.

 A bound for $\partial_{xx}u^\e,\partial_{xx}v^\e$ is derived in a similar way. 
Indeed, the couple $(\partial_{xx}u^\e,\partial_{xx}v^\e)$ solves a
system quite similar to \eqref{e3.1002}, up to the fact that the first equation has the additional
inhomogeneous term $f''(v^\e)(\partial_xv^\e)^2$, which we know to be bounded independently of $\e$. 
As a consequence, recalling~\eqref{barV} and taking $\ell$ satisfying~\eqref{e3.1010},
one sees that, for a possibly larger $M'$ than before,
the pair $M'(\bar U,\bar V)$ is a super-solution to this new system
 for $y\in(0,\ell)$. 
 \\
 Now that we know that $\partial_{xx}u^\e$ is bounded, we can apply on one hand 
 the regularity theory for the oblique derivative problem in~\eqref{e3.1002}
  (see \cite[Theorem 5.18]{Lie}) and infer that the first-order-in-time and second-order-in-space derivatives of $v^\e$ are
 globally bounded and H\"older continuous, uniformly  in $\e>0$, $t\geq0$.
 On the other hand, we directly derive from the last equation in~\eqref{e3.1000} 
 the uniform $L^\infty$ bound for~$\partial_tu^\e$.
\\
One can bootstrap the above arguments, thanks to the smoothness of~$f$, and get uniform $L^\infty$ bounds on the $x$-derivatives of any order of $u^\e,v^\e$,
and also of $\partial_t u^\e,\partial_t v^\e$.
The last equation in~\eqref{e3.1000}  eventually yields that $\partial_t u^\e$
is uniformly H\"older continuous too.
%One then differentiate the first equation in~\eqref{e3.1000} with respect to $y$ and treat the second equation as a Dirichlet
%boundary condition for $\partial_yv$ and deduces from parabolic H\"older regularity estimates yield
%
%
%Then, owing to ,
%and by bootstrap on 
%from the last equation for $u^\e$ in, 
%while a uniform bound for $\partial_{tx}u^\e$ follows from the equation~\eqref{e3.1002} 
%for $ U=\partial_xu^\e$.
%
%
These uniform estimates allow us to pass to the limit as $\e\to0$ (up to subsequences)
in~\eqref{e3.1000} and get a solution to~\eqref{e1.2}. The uniform-in-time bounds on the derivatives are 
inherited by such a solution.

 The argument for system \eqref{e2.4} is analogous, up to the fact that we have the additional term 
$I_0$ in the right-hand side for $v$, which does not affect the previous analysis because~$I_0$
is bounded and smooth. 
Once the bounds for all derivatives of $u$, $v$ are secured, 
the bounds for $S$, $I$, $T$ follow.
\end{proof}
\begin{Rk} The boundedness of the derivatives of $(u,v)$ may look rather simple, but it relies on two deep facts. The first one is that 
the diffusion process in the upper half plane transfers some regularity to the line through the exchange condition. The second is the maximum principle in narrow domains  
(which is equivalent to the existence of a positive strict super-solution, see \cite{BNV} for a very general study), 
which allows $L^\infty$ estimates even though the domain is unbounded in one direction.\\
Using classical regularisation techniques, one can relax the smoothness assumption 
on the initial datum $v_0$ and just require it to be continuous; in such a case the uniform bounds on the derivatives
hold starting from any given positive time $T$.
\end{Rk}
To conclude this section, let us dwell a little more on the issue of the boundedness of derivatives, 
and compare it to what happens in a classical reaction-diffusion equation with nonlocal diffusion, that is
\begin{equation}
\label{e3.1005}
u_t-D\mathcal{J}u=f(u),\ \ t>0, \ x\in\RR,
\end{equation}
where we assume, to fix ideas, that $f(0)=f(1)=0$,  
and that either~$f$ fulfils the Fisher-KPP condition in~\eqref{KPP}, or $f$ is bistable with
positive mass (i.e. $f'(0),f'(1)<0$, $f$ has one zero in $(0,1)$ and $\int_0^1 f>0$).
 The Cauchy Problem for \eqref{e3.1005} with initial data $0\leq u_0\leq1$ produces classical solutions, that are bounded but 
 have derivatives that may grow exponentially  in time, and moreover there is no regularisation mechanism as in parabolic equations
 (i.e.~when ``$\J$'' is replaced by~``$\Delta$'').  

 When $f$ is a bistable non-linearity, 
let us show that the preservation of regularity 
for~\eqref{e3.1005} occurs in case a travelling wave with positive speed, connecting 0 to 1, exists. 
Let $\phi(\xi)$ be such a wave.
It solves
\begin{equation}
\label{e3.10020}
\begin{array}{rll}
-D\mathcal{J}\phi+c\phi'=f(\phi)\quad\text{in }\;\RR\\
\phi(-\infty)=1,\quad\phi(+\infty)=0.
\end{array}
\end{equation}
See Bates, Fife, Ren, Wang \cite{BF} for sufficient existence conditions. One readily sees that the 
positivity of the speed $c$ yields the regularity of $\phi$, and it is this exact same property  that will trigger the regularity mechanism. Assume, in order to simplify the argument,
that the initial datum $u_0$ for \eqref{e3.1005} 
satisfies $u_0(-\infty)=1$, $u_0(+\infty)=0$ 
(if $u_0$ were compactly supported one would need $u_0$ to be sufficiently large on a large interval
and one would also have to deal with leftwards propagating waves). 
A word by word adaptation of the celebrated Fife-McLeod argument \cite{FML} shows the existence 
of two positive numbers $q$ and $\omega$, as well as two real numbers $\xi_1\geq\xi_2$ 
such that
\begin{equation}
\label{e3.10021}
\phi(x-ct+\xi_1)-qe^{-\omega t}\leq u(t,x)\leq \phi(x-ct+\xi_2)+qe^{-\omega t}.
\end{equation}
 The function $U(t,x):=-\partial_x u(t,x)$ solves
\begin{equation}
\label{e3.1006}
U_t-D\mathcal{J}U=f'(u)U,\ \ t>0,\ x\in\RR,
\end{equation}
and the underlying mechanism of Theorem \ref{cauchy} is not present here. 
As a matter of fact, when there is no diffusion, that is, $D=0$, $u(t,x)$ tends to a step function as $t\to+\infty$,
 while $U(t,x)$ grows unboundedly in time, as it becomes a sum of Dirac masses as $t\to+\infty$. 
 Yet, when $D>0$, one recovers the boundedness of $U(t,x)$, but what makes it work is that, for every $x$ in $\RR$, 
 the function $f'(u(t,x))$ is non-negative for a set of times that has bounded measure. Indeed, we may rewrite
\eqref{e3.1006} as
\begin{equation}
\label{e3.1007}
U_t+(D-f'(u(t,x)))U=DK_L*U(t,\cdot)=DK_L'*u(t,\cdot),
\end{equation}
with the latter term going pointwise to $0$ and $f'(u(t,x))\to f'(1)<0$ as $t\to+\infty$,
due to~\eqref{e3.10021}.
Therefore, the Gronwall Lemma gives, for $t$ larger than any given $T>0$, a bound of the form
\begin{equation}
\label{e3.10022}
\vert U(t,x)\vert\leq \sup_{t\geq T}|K_L'*u(t,\cdot)|+
C\mathrm{exp}\biggl(-tD+\int_0^tf'(u(s,x))\biggl)ds,
\end{equation}
for some large $C>0$. Given any $x\in\RR$, estimate \eqref{e3.10021} shows that the time spent by $u(s,x)$ in the zone $\{f'\geq0\}$ is bounded independently of $x$: this ensures the uniform boundedness for the exponential in \eqref{e3.10022}.
This argument, by the way, provides an alternative, and somewhat quicker, proof of a result by Chen \cite{Ch}
 asserting that $u(t,x)$ converges to a travelling wave. 

 When $f$ is of the Fisher-KPP type, there are no such bounds as \eqref{e3.10021}. The analogue of such bounds are given by (Graham \cite{Gr}), but more work is needed to achieve the regularity proof, doing it is outside our scope here. 
Let us notice that regularity results have been proved for travelling fronts of equations of the type \eqref{e3.1005} that are inhomogeneous in $t$ or $x$. Let us for instance mention Coville, Davila, Martinez \cite{CDM} or Shen, Shen \cite{ShSh} for particular cases of transition fronts for inhomogeneous Fisher-KPP non-linearities. All these results are different from Theorem \ref{cauchy} in spirit. This latter theorem presents indeed a new mechanism for the preservation of regularity.

%-----------------------------------------------------------------------------------
%\input{Sections/Steady}

\section{The biological invasions model}\label{s2ter}
\subsection{Steady states and invasion}\label{Liouville}

This section is devoted to the proof of Proposition \ref{t2.2} concerning the biological invasions model~\eqref{e1.2}.
It contains two separate statements. 
The first one is a Liouville-type result asserting that 
the only non-negative, nontrivial, bounded, steady solution for~\eqref{e1.2} 
is the constant pair $(u\equiv\nu/\mu,v\equiv1)$ 
(which is indeed a solution, as one can directly check).
The second one describes the long-time behaviour
for the Cauchy problem, locally in space.
%The second statement of Proposition~\ref{t2.2} readily implies the first one, owing to the comparison principle,
%as we now show.
% in the following proof, which 
%
%
% solis a Liouville-type result, while 
%asserting that 
%%\begin{Prop}\label{Liouv}
%the only nonegative, nontrivial, bounded, steady solution for  is 
%%\end{Prop}
%%\begin{proof}
%It is clear that such pair solves~\eqref{e1.2}.
%%The method  us show the uniqueness.
% In order to prove that there are no other nonnegative, nontrivial, bounded, steady states one can argue as in the proof 
The proof is a straightforward adaptation of 
%that of~\cite[Proposition 4.1]{BRR2}, 
%where the result is derived  for~\eqref{e1.2} with $\partial_{xx}$ in place of~$\J$,
the one for the local problem given in \cite{BRR3},
which is based on the comparison principle and a variant of the sliding method.
We give it here for the sake of completeness.

 \begin{proof}[Proof of Proposition~\ref{t2.2}]
 We simultaneously show  the two statements of the Proposition. 
 For $R>0$ sufficiently large, the principal eigenfunction $\phi$ of $-\Delta$ in the ball~$B_R$, 
 with Dirichlet boundary condition,
 satisfies $-\Delta\phi\leq f'(0)\phi$, hence  
% 
%  of a positive For given $L>0$, consider the function $v_L$ defined on the square $[0,L]^2$ by
%  $$
%  \underline v_L(x,y):=\sin\Big(\frac{\pi x}L\Big)\sin\Big(\frac{\pi y}L\Big).
%  $$
%  Direct computation shows that, if $L$ is sufficiently large, then 
%  $\Delta (\delta\underline v_L)>f(\delta\underline v_L)$ on $(0,L)^2$, for all $\delta\in(0,\delta_0]$,
%  with $\delta_0>0$ sufficiently small.
  $-\Delta(\delta\phi)\leq f(\delta\phi)$ for $\delta>0$ smaller than some $\delta_0>0$.
  We extend $\phi$ by $0$ outside $B_R$, and we call $\tilde\phi(x,y):=\phi(x,y-R-1)$, so that its support does not intersect the
  line~$\{y=0\}$. We deduce that the pair
  $(0,\delta\tilde\phi)$ is a {\em generalised} steady sub-solution to~\eqref{e1.2}.
  On the other hand, the constant pair $M(\nu/\mu,1)$ is a super-solution to~\eqref{e1.2}
  for any $M\geq1$.
  Let $(\underline u,\underline v)$ and $(\overline u,\overline v)$ be the solutions to~\eqref{e1.2} 
  emerging  respectively from the initial data
  $(0,\delta\tilde\phi)$ and $M(\nu/\mu,1)$, with $\delta\in(0,\delta_0]$ and $M\geq1$.
  	Since system~\eqref{e1.2} fulfils the comparison principle --~c.f.~the beginning of Section~\ref{s2bis}~--
    	one deduces that these solutions are respectively
      non-decreasing and non-increasing in $t$, and satisfy~$(\underline u,\underline v)\leq(\overline u,\overline v)$
  for all $t>0$ and $(x,y)\in\RR^2_+$.  
  Moreover, using the boundedness of derivatives asserted by Theorem~\ref{cauchy},
  one infers that these solutions converge locally uniformly in space, as $t\to+\infty$, 
  to a steady solution
  $(\underline u_\infty,\underline v_\infty)$ and $(\overline u_\infty,\overline v_\infty)$ respectively,
  which satisfy 
  $$(0,\delta\tilde\phi)\leq(\underline u_\infty,\underline v_\infty)
  \leq(\overline u_\infty,\overline v_\infty)\leq M\Big(\frac\nu\mu,1\Big),
  \quad\forall (x,y)\in\RR^2_+.$$
  As a consequence, if we show that the steady states $(\underline u_\infty,\underline v_\infty)$ and
  $(\overline u_\infty,\overline v_\infty)$ coincide,
  we would have that any solution with an initial datum lying between 
  $(0,\delta\tilde\phi)$ and $M(\nu/\mu,1)$, for some $\delta,M>0$, converges  as $t\to+\infty$
  to such a steady state, locally uniformly in space.
  This would immediately yield the Liouville-type result, since any non-negative,  
     bounded, steady solution $(u,v)\not\equiv(0,0)$ to \eqref{e1.2}
     satisfies $v>0$ in $\RR^2_+$ (because otherwise $v\equiv0$ by the elliptic strong maximum principle and thus
     $u\equiv0$ by the second equation in~\eqref{e1.2}) hence
     $(0,\delta\tilde\phi)\leq (u,v)\leq M(\nu/\mu,1)$ for $\delta\ll1$, $M\gg1$. 
  Analogously, one would also derive the second statement of Proposition~\ref{t2.2} thanks to the parabolic
  strong maximum principle applied to $v$.
%  because
%   if $(u,v)$ is the solution to~\eqref{e1.2} 
%    with  an initial datum $(0,v_0)$, with $0\leq v_0\leq1$, $v_0\not\equiv0$ smooth and compactly supported, 
%    one has $v(t,x,y)>0$ for any $t>0$ and $(x,y)\in\RR^2_+$ due to the parabolic strong maximum principle,
%    hence one can find $\delta>0$ such that $\delta\underline v_L(x,y-1)\leq v(1,x,y)$
%    for $(x,y)\in\RR^2_+$. 
    
    To conclude the proof we then need to show that $(\underline u_\infty,\underline v_\infty)\equiv
    (\overline u_\infty,\overline v_\infty)$. We know that $(\overline u_\infty,\overline v_\infty)$
%In order to conclude, we need to show that $(u_s,v_s)\equiv(\nu/\mu,1)$.
% This is done using Proposition~\ref{steady-smooth}, which allows one to 
% apply the arguments presented at the beginning of this section,
% up to passing to the limits
% of translations first along a minimizing sequence for~$v_s$ --~which provides a new pair of solutions
% $(\tilde u,\tilde v)$~-- next along a minimizing sequence for $\tilde u$.
 is independent of $x$. We now show that the same is true for $(\underline u_\infty,\underline v_\infty)$,
 using a variant of the sliding method.
 Namely, since $\delta\tilde\phi\leq\underline v_\infty$ in the ball  $B_R(0,R+1)$,
 and the strict inequality holds on its boundary,
 the elliptic strong maximum principle implies that the inequality is strict in the interior too.
 Hence we can find $H>0$ such that $\delta\tilde\phi(x+h,y)\leq\underline v_\infty(x,y)$
 for all $h\in[-H,H]$ and $(x,y)\in\RR^2_+$. Since the solution emerging from $(0,\delta\tilde\phi(x+h,y))$
 converges to $(\underline u_\infty(x+h),\underline v_\infty(x+h,y))$ as $t\to+\infty$
 (by the horizontal invariance of the system), 
 it follows from the comparison principle that 
 $(\underline u_\infty(x+h),\underline v_\infty(x+h,y))\leq(\underline u_\infty,\underline v_\infty)$ in
 $\RR^2_+$. This being true for all $h\in[-H,H]$, we conclude that $(\underline u_\infty,\underline v_\infty)$
 is $x$-independent.
 Finally, since $(\underline u_\infty,\underline v_\infty)$ and $(\overline u_\infty,\overline v_\infty)$
 do not depend on $x$, the term $\J u$ in~\eqref{e1.2} drops and one ends up in the local case.
 Namely, one directly applies \cite[Proposition 3.1]{BRR3} (with $\rho=0$) and infers that 
 $(\underline u_\infty,\underline v_\infty)\equiv(\overline u_\infty,\overline v_\infty)$.
% 
% 
% 
% For this, one uses the sliding method (based on the Strong maximum principle) consisting in translating 
% $\delta\underline v_L(x,y-1)$ in the horizontal direction and in the positive-vertical direction.
%This entails that $v\geq\delta\max v_L=\delta$ on $\RR\times(L/2+1,+\infty)$,
%% Together with the strong maximum principle, 
%hence that $\tilde v\not\equiv0$.
\end{proof}

\subsection{A benchmark: Fisher-KPP front propagation with nonlocal diffusion}\label{sec:Benchmark}

In order to investigate the propagation for the model \eqref{e1.2}, 
we start with considering the Fisher-KPP equation alone in the one-dimensional space, with nonlocal diffusion:
\begin{equation}
\label{e3.100}
u_t-D\mathcal{J}u=f(u)\quad(t>0,\ x\in\RR).
\end{equation}
It is well known (see, for instance, Liang-Zhao \cite{LZ}, Thieme-Zhao \cite{TZ}) that the speed of propagation 
for compactly supported initial data is inferred, in this case, from the study of the plane waves of \eqref{e3.100}, linearised around $u=0$:
\begin{equation}
\label{e3.101}
u_t-D\mathcal{J}u=f'(0)u\quad(t>0,\ x\in\RR)
\end{equation}
	Plane waves for \eqref{e3.101} are sought for in the exponential form
\[
u(t,x)=e^{-a(x-ct)},
\]
with $a,c>0$.
Direct computation shows that
\[
\begin{split}
	\J u &=\di\int_\RR K_L(x-x')\big(e^{-(a x'-ct)}-e^{-(a x-ct)}\big)dx'\\
	 & = u\int_\RR K_L(x-x')\big(e^{-a(x'-x)}-1\big)dx'\\
	& =\vp_{L}(a) u,
\end{split}
\]
where we have set 
\begin{equation}
\label{e3.2}
\begin{split}
\vp_{L}(a) :&=\int_{\RR}K_L(x)(e^{a x}-1)dx\\
&=2\int_0^{+\infty}K_L(x)(\cosh(ax)-1)dx.
\end{split}
\end{equation}
Thus $\vp_L(a)$ is an eigenvalue for $\J$. The function $a\mapsto\vp_L(a)$ is analytic, even, nonnegative and vanishes at $0$. 
It further satisfies
\begin{equation}
	\label{phi''}
\vp_{L}''(a)=2\int_0^{+\infty}K_L(x)\cosh(ax)x^2dx,
\end{equation}
hence it is strictly convex.
Moreover, for all $\delta\in(0,1)$ it holds that
\begin{equation}
	\label{e5.100}
	\delta\Big(\min_{[1-\delta,1]}K\Big) \left(\frac12e^{(1-\delta)aL}-1\right)\leq\vp_L(a)\leq e^{aL}-1,
\end{equation}
which shows that $\vp_L(a)$ grows exponentially as $a\to+\infty$.

The function~$u(t,x)=e^{-a(x-ct)}$ solves \eqref{e3.101} if and only if
\begin{equation}
\label{e3.103}
c=\frac{D\vp_L(a)+f'(0)}{a}.
\end{equation}
We call $c_L(D)$ the minimal value of $c$ in~\eqref{e3.103}
as $a$ varies on~$(0,+\infty)$. By the strict convexity of $\vp_L$,
such a minimal value is attained by a unique $a=a(D)$.
It turns out that the minimum $c_L(D)$ is the asymptotic spreading speed  for~\eqref{e3.100}, as well as
the minimal speed of travelling waves, see for instance \cite{CD}.
In order to see the important effect of the road on the overall propagation in model \eqref{e1.2}, it is useful to provide an order of magnitude of $c_L(D)$ when $D$ is large, the other parameters being fixed. 
%A more precise study 
%with nondimensional parameters will be provided for the full system~\eqref{e1.2} in the next section. 
%As $\varphi_L$ is convex, the bottom speed is obtained by asking, in addition to \eqref{e3.103}, the tangency condition
%$$
%-D\varphi_L'(a)+c=0.
%$$
%This yields
The minimiser $a(D)$ in~\eqref{e3.103} satisfies
\begin{equation}\label{a_}
\frac{f'(0)}D
=
a(D)\varphi_L'(a(D))-\varphi_L(a(D))
=\int_0^{a(D)} x\varphi_L''(x)dx.
\end{equation}
This indicates that $a(D)\to0$ as $D\to+\infty$, and more precisely that
%\nota{j'ai chang\'e un peu les calculs
%car je ne retrouvais pas un passage. Correct?}
\begin{equation}\label{aD}
\frac{f'(0)}D
=\frac12 a^2(D)\big(\vp_L''(0)+o(1)\big)\quad\text{as }\;D\to+\infty.
\end{equation}
Recalling that $c_L$ is given by~\eqref{e3.103} with $a=a(D)$ satisfying~\eqref{a_}, one gets
$$c_L(D)=D\varphi_L'(a(D))=Da(D)\big(\varphi_L''(0)+o(1)\big)\quad\text{as }\;D\to+\infty,$$
whence, by~\eqref{aD},
$$c_L(D)=\sqrt{2Df'(0)\big(\varphi_L''(0)+o(1)\big)}\quad\text{as }\;D\to+\infty.$$
Finally, computing
$$\vp_{L}''(0)=
\int_{\RR}K_L(x)x^2dx=L^2\int_{\RR}K(x)x^2dx=:L^2\langle x^2K\rangle ,$$
we eventually find
\begin{equation}\label{cL}
c_L(D)=\sqrt{2Df'(0)\big(L^2\langle x^2K\rangle +o(1)\big)}\quad\text{as }\;D\to+\infty.
\end{equation}

%
%Recalling the definition of $c_L$ we deduce,  
%%$$
%%\frac{c_L^2}{2d},
%%$$
%%we get, 
%as $D\to+\infty$,
%$$
%\frac{c_L^2}{2dD}=\underline a^2\Big(\frac12\varphi_L''(0)+o(1)\Big)=
%\underline a^2\Big(\frac12\int_{\RR}K_L(x)x^2dx+o(1)\Big),
%$$
%whence, writing
%$$\int_{\RR}K_L(x)x^2dx=L^2\int_{\RR}K(x)x^2dx=:L^2\langle x^2K\rangle ,$$
%we derive
%\begin{equation}
%\label{e5.200}
%\underline a\ \underset{\text{\tiny $D\!\to\!+\infty$}}{\sim}\ \frac{c_L}{\sqrt{\langle x^2K\rangle dDL^2}}.
%\end{equation}
%Finally, plugging \eqref{a_} in~\eqref{e3.103} we deduce 
%$$\underline c_L(D)=D\varphi_L'(\underline a)=D\underline a(\varphi_L''(0)+o(1))
%=D\underline a(L^2\langle x^2K\rangle +o(1)),$$
%and eventually, from \eqref{e5.200},
%\begin{equation}
%	\label{e5.201}
%	\underline c_L(D)\ \underset{\text{\tiny $D\!\to\!+\infty$}}{\sim}\ c_L\sqrt{\frac{\langle x^2K\rangle DL^2}{ d}}.
%\end{equation}

\subsection{Spreading speed }\label{spreadingspeed}

We now turn to Theorem \ref{t2.1} which asserts the existence of an asymptotic spreading speed~$c_*$
for model \eqref{e1.2}.
This will be given by the least $c$ so that the    
linearised system around $(0,0)$ has plane wave  supersolutions (i.e.~satisfying
the inequalities ``$\geq$'' in the three equations)
moving with speed $c$ in the $x$ direction.
 The analogous property is proved in \cite{BRR2} when the diffusion on the line is $-D\partial_{xx}$. 
% The adaptations needed to prove it in 
%the case of a nonlocal diffusion being immediate, they will not be given. We will instead concentrate on the 
%construction of such a least speed $c_*$ and its properties deduced from the linearised system, which exhibits new features.  
%This system reads

To start with, we linearise the system \eqref{e1.2} around~$v\equiv0$~:
\begin{equation}
\label{e4.1}
\left\{
\begin{array}{rll}
\partial_t u-D\J u\,=&\nu v-\mu u &\quad(t>0,\ x\in\RR,\ y=0)\\
\partial_tv-d\Delta v\,=&f'(0)v &\quad(t>0,\ (x,y)\in\RR^2_+)\\
-d\partial_yv\,=&\mu u-\nu v &\quad(t>0,\ x\in\RR,\ y=0).
\end{array}
\right.
\end{equation}
The novelty with respect to~\cite{BRR2} is in the nonlocal term $\J u$ instead of $\partial_{xx}u$.
However, as seen in Section~\ref{sec:Benchmark}, exponential functions 
are also eigenfunctions for such a nonlocal operator. 
This is why we look for plane wave solutions for \eqref{e4.1} as exponential functions,
exactly as in the local case:
\begin{equation}
\label{expsol}
(u(t,x),v(t,x,y))=e^{-a(x-ct)}(1,\gamma e^{-b  y})\qquad %\text{with }\;
a,\gamma,c>0, \ b\in\RR.
\end{equation}
%We will actually look for supersolutions 
Starting from these pairs, we will construct suitable super and sub-solutions to \eqref{e1.2}.
\begin{Lemma}\label{lem:super}
Let $\vp_L$ be defined in~\eqref{e3.2}, let $\cK:=2\sqrt{df'(0)}$ and call
\begin{equation}
\label{D*}
D_*:=\frac{2f'(0)}{\vp_L\big(\frac\cK{2d}\big)}.
\end{equation}
Then the following occur:
\begin{enumerate} %[$(i)$]
\item if $D\leq D_*$
then system~\eqref{e4.1} admits a supersolution in the form~\eqref{expsol} if and only if $c\geq \cK$.
\item if $D> D_*$ then there exists a quantity $c_*(D,L)>\cK$ such that
the system~\eqref{e4.1} admits a supersolution in the form~\eqref{expsol} if and only if $c\geq c_*(D,L)$. 
Moreover~$c_*(D,L)$ satisfies
\begin{equation}
\label{c*DL}
\lim_{DL^2\to+\infty}\frac{c_*(D,L)}{\sqrt{DL^2}}>0.
\end{equation}
\end{enumerate}
\end{Lemma}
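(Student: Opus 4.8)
The plan is to substitute the exponential ansatz \eqref{expsol} into the three relations of the linearised system \eqref{e4.1} and to turn the supersolution requirement into scalar inequalities. Since exponentials are eigenfunctions of $\J$, with $\J e^{-a(x-ct)}=\vp_L(a)\,e^{-a(x-ct)}$ for $\vp_L$ as in \eqref{e3.2}, and since $\Delta\big(e^{-a(x-ct)-by}\big)=(a^2+b^2)e^{-a(x-ct)-by}$, dividing each equation by the (positive) exponential reduces the problem to
\begin{equation}
\label{plan:three}
ac-D\vp_L(a)\geq\nu\gamma-\mu,\qquad ac-d(a^2+b^2)\geq f'(0),\qquad \gamma(db+\nu)\geq\mu .
\end{equation}
The middle inequality can hold with real $b$ only if $ac-da^2-f'(0)\geq0$ for some $a>0$; as $\max_{a>0}(ac-da^2-f'(0))=\frac{c^2}{4d}-f'(0)$, this forces $c\geq\cK$, so no supersolution of the form \eqref{expsol} exists when $c<\cK$. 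This is the ``only if'' part common to both cases.

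Next I would eliminate $\gamma$ and $b$. Only $b^2$ enters the middle inequality while $+b$ helps the third, so for each admissible $a$ (i.e.\ $a>0$ with $ac-da^2-f'(0)\geq0$) I take the largest value $b=b(a,c):=\sqrt{(ac-da^2-f'(0))/d}\geq0$, which makes the middle inequality an equality. A positive $\gamma$ fulfilling the first and third inequalities in \eqref{plan:three} then exists if and only if $\frac{\mu}{db+\nu}\leq\frac{ac-D\vp_L(a)+\mu}{\nu}$, that is
\begin{equation}
\label{plan:key}
\big(db(a,c)+\nu\big)\big(ac-D\vp_L(a)+\mu\big)\geq\mu\nu .
\end{equation}
Hence a supersolution of the form \eqref{expsol} with speed $c$ exists if and only if $c\geq\cK$ and \eqref{plan:key} holds for some admissible $a$. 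A monotonicity remark then organises the admissible speeds: for fixed $(a,b,\gamma)$ the left-hand sides of the first two inequalities in \eqref{plan:three} increase with $c$ while the third is independent of $c$, so a supersolution at speed $c$ is one at every $c'\geq c$. The admissible set is thus a half-line, and since for $c>\cK$ the admissible $a$ range over a compact subinterval of $(0,+\infty)$, a short compactness argument shows the set is closed, equal to $[\,c_{\mathrm{thr}},+\infty)$ for a well-defined threshold $c_{\mathrm{thr}}=c_{\mathrm{thr}}(D,L)$.

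The dichotomy is read off at the endpoint $c=\cK$, where the admissible $a$-interval degenerates to the single point $a_0:=\cK/(2d)$ and $b(a_0,\cK)=0$, so \eqref{plan:key} collapses to $a_0\cK-D\vp_L(a_0)\geq0$. As $a_0\cK=2f'(0)$, this reads $D\leq 2f'(0)/\vp_L(\cK/(2d))=D_*$, with $D_*$ as in \eqref{D*}. Therefore, for $D\leq D_*$ the speed $\cK$ is admissible (take $a=a_0$, $b=0$, $\gamma=\mu/\nu$) and $c_{\mathrm{thr}}=\cK$, which is case (1). For $D>D_*$ one has $a_0\cK-D\vp_L(a_0)<0$, so \eqref{plan:key} fails at $c=\cK$; by continuity of $a\mapsto(db(a,c)+\nu)(ac-D\vp_L(a)+\mu)$ and of the shrinking admissible interval, it still fails for all admissible $a$ when $c$ is close to $\cK$, so $c_*(D,L):=c_{\mathrm{thr}}>\cK$, while finiteness follows by fixing a small $a$ and letting $c\to+\infty$. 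This is case (2) up to the asymptotics.

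It remains to establish \eqref{c*DL}, which I expect to be the main obstacle. I would deduce it from the clean lower bound $c_*(D,L)\geq\kappa_0\,c_L(D)$, valid for all parameters, with $\kappa_0:=\frac{f'(0)}{\mu+2f'(0)}\in(0,1)$ independent of $D,L$. Indeed, any admissible $a$ for \eqref{plan:key} satisfies $a\geq f'(0)/c$ (from $ac-da^2-f'(0)\geq0$) and $D\vp_L(a)<ac+\mu$ (since \eqref{plan:key} forces $ac-D\vp_L(a)+\mu>0$); combining the latter with the minimality relation $D\vp_L(a)+f'(0)\geq c_L(D)\,a$ inherent in the definition of $c_L(D)$ via \eqref{e3.103} gives $a\big(c_L(D)-c\big)<\mu+f'(0)$, and feeding in $a\geq f'(0)/c$ forces $c>\kappa_0 c_L(D)$. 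Hence $c_*(D,L)\geq\kappa_0 c_L(D)$. Finally, the scaling $\vp_L(a)=\vp_1(aL)$ gives $c_L(D)=L\,c_1(D)$, where $c_1$ is the benchmark speed for $L=1$, whence $c_L(D)/\sqrt{DL^2}=c_1(D)/\sqrt D$; by \eqref{cL} this tends to $\sqrt{2f'(0)\langle x^2K\rangle}>0$ as $D\to+\infty$ and stays bounded below by a positive constant for $D$ in compact subsets of $(0,+\infty)$, so it is bounded below by a positive constant throughout the regime $DL^2\to+\infty$. Therefore $\liminf_{DL^2\to+\infty}c_*(D,L)/\sqrt{DL^2}\geq\kappa_0\cdot\mathrm{const}>0$, which is the content of \eqref{c*DL}. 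The delicate point is precisely this comparison with the benchmark speed $c_L$ together with the control of the joint limit $DL^2\to+\infty$ (and not merely $D\to+\infty$, the regime of \eqref{cL}); the reduction to the single scalar criterion \eqref{plan:key} is the technical core that makes all of the above, and this comparison in particular, possible.
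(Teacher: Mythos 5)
Your reduction of the supersolution condition to a single scalar criterion (optimising over $b$ with the middle inequality saturated, then eliminating $\gamma$), the necessary condition $c\geq\cK$, the monotonicity in $c$, and the identification of $D_*$ by evaluating the criterion at the degenerate point $(a,b)=(\cK/(2d),0)$ are all correct. This is in substance the same computation the paper carries out geometrically: your criterion is exactly the condition that a point of the disk bounded by the circle $\Gamma_2$ from the field equation lies in the region above the curve $\Gamma_1$ coming from the exchange equations, and your threshold condition $a_0\cK-D\vp_L(a_0)\geq0$ is the paper's condition \eqref{c*=cK} rewritten. So the dichotomy and the existence of the threshold speed $c_*(D,L)$ are fine.

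The genuine gap is in the asymptotics \eqref{c*DL}. What you actually prove is $\liminf_{DL^2\to\infty}c_*(D,L)/\sqrt{DL^2}\geq\kappa_0\sqrt{2f'(0)\langle x^2K\rangle}>0$, via the neat comparison $c_*(D,L)\geq\kappa_0\,c_L(D)$ with the benchmark speed. (That comparison is correct, and the scaling $c_L(D)=L\,c_1(D)$ is right; note, however, that your justification that $c_1(D)/\sqrt D$ stays bounded below does not cover the regime $D\to0$, $L\to\infty$ with $DL^2\to\infty$ --- this is easily repaired since $\vp_L(a)\geq\tfrac12\langle x^2K\rangle a^2L^2$ and AM--GM give $c_L(D)\geq\sqrt{2DL^2\langle x^2K\rangle f'(0)}$ directly, for all $D,L$.) But \eqref{c*DL}, and the final assertion of Theorem~\ref{t2.1} that it supports, claim that $c_*(D,L)/\sqrt{DL^2}$ \emph{converges} to a positive constant. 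Your argument yields neither an upper bound of order $\sqrt{DL^2}$ nor the existence of the limit. The paper gets both by first proving via \eqref{ainftyto0} that the relevant $a$ tends to $0$ as $DL^2\to+\infty$, so that $\vp_L(a)$ can be replaced by $\tfrac12 L^2\langle x^2K\rangle a^2$, and then rescaling $a\sim(DL^2)^{-1/2}$, $c\sim(DL^2)^{1/2}$ to reduce \eqref{e4.3} to the limit system \eqref{reduced}, whose minimal admissible $w$ is the constant $w_*(d,\mu,\nu)$ of \eqref{w*}. Some version of this rescaling and passage to a limit system is needed to close the proof of the lemma as stated; the lower bound alone does not suffice.
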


\begin{proof}
The third equation
of~\eqref{e4.1} rewrites in terms of the parameters in~\eqref{expsol} as $\gamma=\mu/(\nu+db)$.
This fixes $\gamma$, and entails that necessarily $b >-\nu/{d}$
(observe that also when one deals with super-solutions, it is convenient to take $\gamma$ so that
equality holds in the third equation of~\eqref{e4.1}, because increasing $\gamma$ makes the inequality $\geq$
in the first one more stringent).
The plane wave problem for~\eqref{e4.1} then reduces to the following system in the unknowns $a$ and $b $:
\begin{equation}
\label{e4.3}
\left\{
\begin{array}{rcl}
-D\vp_L(a)+ca+\di\frac{d\mu b }{\nu+db }&=&0\\
-(a^2+b ^2)+\di\frac{ca}d&=&\di\frac{\cK^2}{4d^2}\,,
\end{array}
\right.
\end{equation}
where $\vp_L(a)$ is given by \eqref{e3.2} and $\cK:=2\sqrt{df'(0)}$.
Solutions of~\eqref{e4.3} correspond in the $(a,b)$ plane, restricted to $b >-\nu/{d}$, to the intersection
between the curve $\Gamma_1$, 
given by the first equation,
and the circle $\Gamma_2$ with centre~$(\frac{c}{2d},0)$ and 
radius~$\rho(c):=\frac{\sqrt{c^2-\cK^2}}{2d}$, which is nonempty if and only if $c\geq \cK$.

Let us examine $\Gamma_1$, for given $c>0$. Recall from Section~\ref{sec:Benchmark} that $\varphi_L$ is 
analytic, even, vanishes at $0$ and it is
uniformly strictly convex (i.e.~$\inf_{\RR}\vp_L''>0$).
We find that~$\Gamma_1$ is the graph
\begin{equation}\label{G1}
b=G_1^c(a):=\frac\nu d\left(\frac{\mu}{\mu+ca-D\vp_L(a)}-1\right),
\end{equation}
which is~defined for $a\in(a_-^\infty(c,D),a_+^\infty(c,D))$ (in order to fulfil $b>-\nu/{d}$)
where $a_-^\infty(c,D)<0<a_+^\infty(c,D)$ are the solutions to
\begin{equation}
\label{e4.6}
D\vp_{L}(a_\pm^\infty(c,D))=ca_\pm^\infty(c,D)+\mu.
\end{equation}
The function $G_1^c(a)$ is analytic, has the two vertical asymptotes $a=a_\pm^\infty(c,D)$ and the two zeroes $a=0$ and
$a=a_0(c,D)$, the latter being the unique positive solution of
\begin{equation}
\label{a*}
D\vp_{L}(a_0(c,D))=ca_0(c,D).
\end{equation}
With respect to the parameter $c$, the function $G_1^c(a)$ is smooth and strictly decreasing for $a>0$ (and $a_\pm^\infty(c,D)$ 
are increasing).

%\bigskip\bigskip
%In the $a,b>0$ quadrant, $\Gamma_1$ is the graph of a strictly increasing function $G_1^c(b)$ satisfying
%$G_1^c(0^+)=a_0(c,D)$ and $G_1^c(+\infty)=a_+^\infty(c,D)$.
%Notice that the functions $(c,D)\mapsto a_\pm^\infty(c,D)$ and $(c,D)\mapsto a_0(c,D)$ are smooth, 
%due to the strict convexity of $\vp_{L}$, and satisfy $\partial_ca_0(c,D)>0$, $\partial_ca_+^\infty(c,D)>0$. 
%Indeed one has that $G_1^c$ is strictly increasing with respect to the parameter $c$.

%In terms of system~\eqref{e4.3}, 
Plane wave {super-solutions} to~\eqref{e4.1} correspond to the points $(a,b)$ lying in the intersection
of the disk $\mc{E}_2$ with boundary $\Gamma_2$ and the region $\mc{E}_1$ \blue{bounded from below by}~$\Gamma_1$.
It is readily seen that $\mc{E}_2$ is continuously strictly increasing with respect to~$c$, and we have seen that
the same is true for $\mc{E}_2\cap\{a>0\}$.
For $c<\cK$ the intersection $\mc{E}_1\cap\mc{E}_2$ is empty because $\mc{E}_2$ is.  
For $c=\cK$ the disk $\mc{E}_2$ reduces to its centre $(\frac{\cK}{2d},0)$
and thus $\mc{E}_1\cap\mc{E}_2\neq\emptyset$
if and only if 
\begin{equation}
\label{c*=cK}
\frac{\cK}{2d}\leq a_0(\cK,D).
\end{equation}
Therefore, if condition~\eqref{c*=cK} holds, plane wave {supersolutions} exist if and only if $c\geq\cK$.
% with speed $c$ sligthly larger than $\cK$ will be a supersolution to \eqref{e2.4}. 

Suppose instead that~\eqref{c*=cK} does not hold, 
%\begin{equation}
%\label{c*>}
%\frac{\cK}{2d}> a_0(\cK,D).
%\end{equation}
that is, the disk $\mc{E}_2$ ``appears'' when $c=\cK$ outside the set $\mc{E}_1$.
Notice that the leftmost point of $\mc{E}_2$, i.e.~$\Big(\di\frac{c-\sqrt{c-\cK^2}}{2d},0\Big)$, 
approaches the origin as $c\to+\infty$.
Therefore, by the monotonicity properties of $\mc{E}_1$ and $\mc{E}_2$,
as $c$ increases starting from $\cK$, 
there has to be a first value of $c$ at which $\mc{E}_1$ and $\mc{E}_1$ intersect, 
being tangent at some point~$(a_*,b_*)$, which corresponds to a solution of~\eqref{e4.3}.
 This first~$c$ is the sought for~$c_*(D,L)$. The dichotomy is depicted in Figure~\ref{fig:c*}.

Let us look at condition~\eqref{c*=cK} in terms of $D$. 
Recall from~\eqref{a*} that $a_0(c,D)$ is the unique positive solution of $\psi(a_0(c,D))=c/D$,
with $\psi(a):=\vp_L(a)/a$.
The strict convexity of $\vp_L$ implies that the function $\psi$ is strictly increasing for $a>0$
and satisfies $\psi(0^+)=0$ and $\psi(+\infty)=+\infty$, hence $a_0(c,D)=\psi^{-1}(c/D)$ and 
then~\eqref{c*=cK} rewrites as
$$\psi^{-1}\Big(\frac\cK{D}\Big)\geq \frac\cK{2d}.$$
Rewriting $\psi(a):=\vp_L(a)/a$ and $\cK=2\sqrt{df'(0)}$ we eventually find that~\eqref{c*=cK}
is equivalent to $D\leq D_*$ with $D_*$ given by~\eqref{D*}.

 \begin{figure}[ht]
%\vspace{5mm}
\centering
 \subfigure[$D<D_*$]
   {\includegraphics[width=5cm]{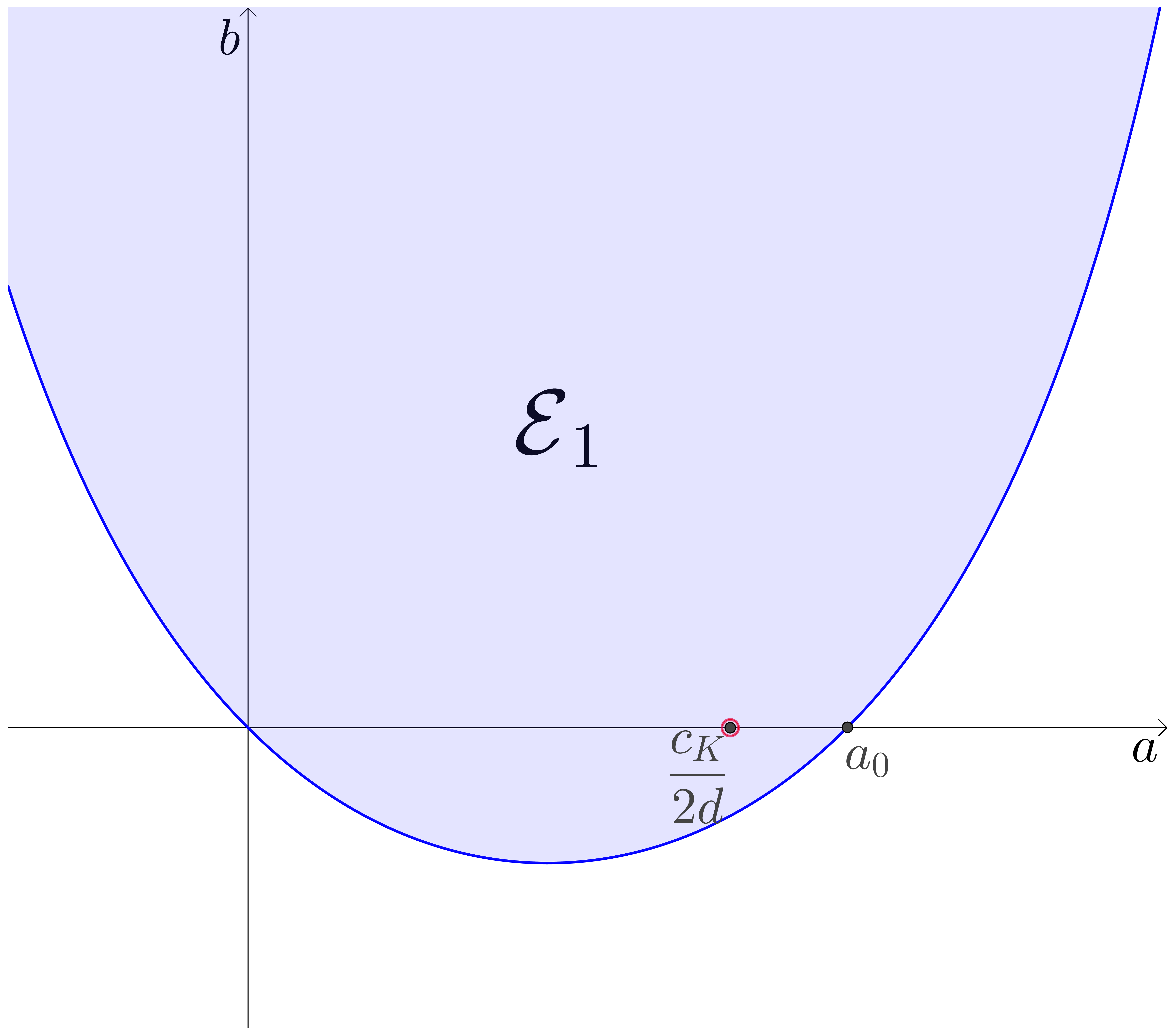}}
 \hspace{20mm}
 \subfigure[$D>D_*$]
   {\includegraphics[width=5cm]{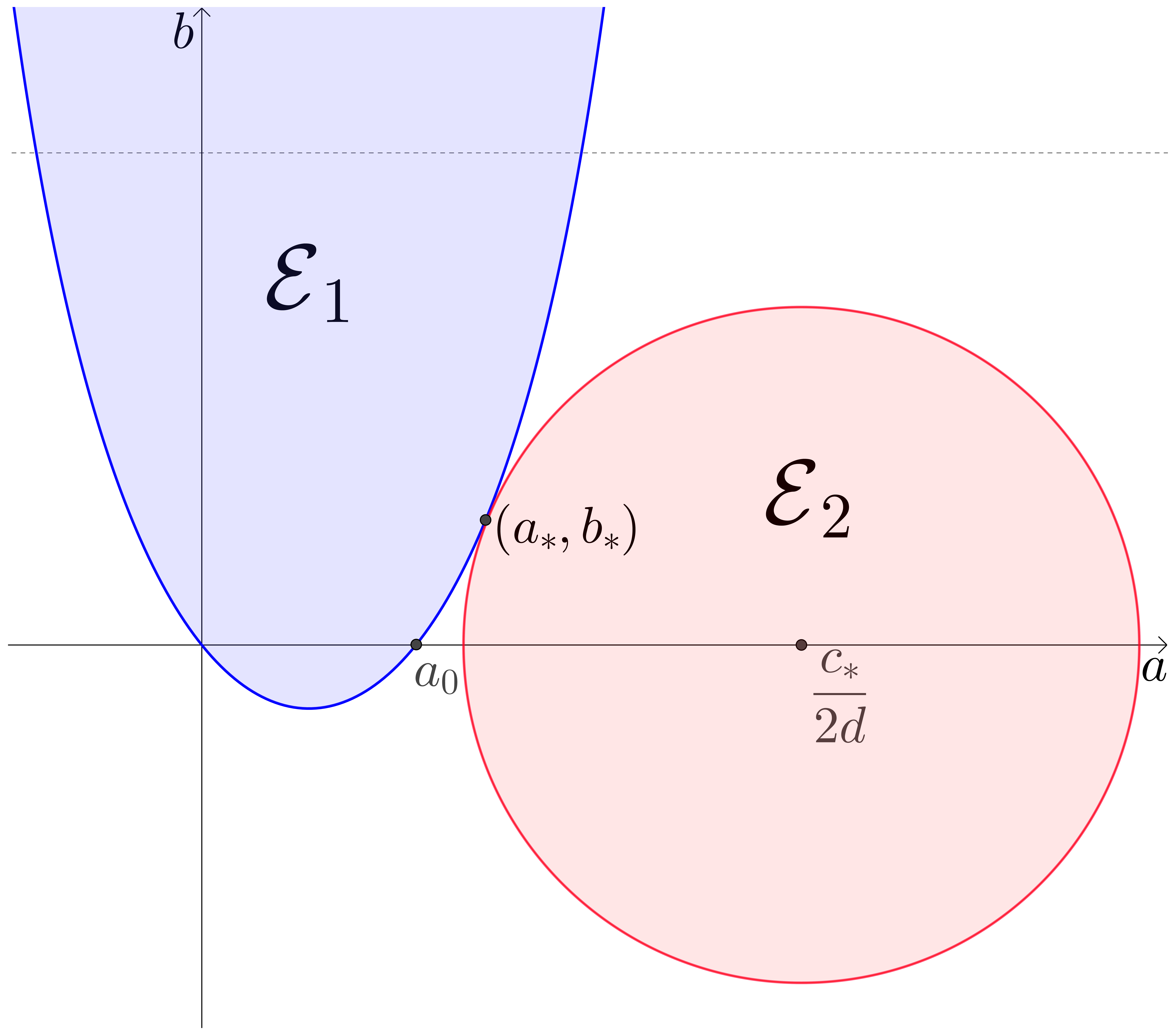}}
 \caption{The minimal $c$ for the existence of plane wave super-solutions: (a) $c=\cK$, (b) $c=c_*(D,L)$.}
 \label{fig:c*}
 \vspace{-2mm}
 \end{figure}

In order to conclude the proof of the lemma it only remains to show~\eqref{c*DL}.
%Let us prove the additional properties of $c_*(D,L)$ stated in Theorem \ref{t2.1}. 
%Let us preliminarily check that
%\begin{equation}
%\label{c*toinfty}
%\lim_{DL^2\to+\infty}c_*(D,L)=+\infty.
%\end{equation}
%a more precise estimate will be provided below. 
Firstly, we use that $\vp_{L}(a)=\vp_1(aL)$ and that $\vp_1$ is strictly convex to derive
from~\eqref{e4.6} the existence of a positive constant $H$ such that 
$$H(a_\pm^\infty(c,D))^2 DL^2\leq ca_\pm^\infty(c,D)+\mu.$$
This yields  
\begin{equation}
\label{ainftyto0}a_+^\infty(c_*(D,L),D)\to0\quad\text{ as }\;DL^2\to+\infty.
\end{equation}
%Hence, since $\mc{E}_1$ and $\mc{E}_2$ must intersect for $c=c_*(D,L)$,
%one necessarily has that
%$$\frac{c_*(D,L)-\sqrt{c_*^2(D,L)-\cK^2}}{2d}\leq a_+^\infty(c_*(D,L),D)\to 0\quad\text{ as $DL^2\to+\infty$},$$
%%from which~\eqref{c*toinfty} follows.
%
% the definition of while the leftmost intersection of $\Gamma_2$ with the $a$-axis remains bounded away from the origin, a contradiction. 
%Next, let us analyse the critical $D$ above which the spreading speed $c_*(D,L)$ departs from $\cK$.\nota{rivedere}
%We recall that $c_*(D,L)>\cK$ if and only if~\eqref{c*>} holds.
%The quantity $a_0(\cK,D)$ satisfies~\eqref{a*} with $c=\cK$,
%hence the strict convexity of $\vp_{L}$ implies that the function $D\mapsto a_0(\cK,D)$ is decreasing and that it satisfies
%$a_0(\cK,0^+)=+\infty$ and
%\begin{equation}
%\label{a*to0}
%a_0(\cK,D)\sim \frac{2\cK}{\langle x^2K\rangle DL^2}\quad\text{as }D\to+\infty.
%\end{equation}
%This  in turn implies the existence of a critical $D_*>0$ such that the inequality~\eqref{c*>}
%--~i.e.~$c_*>\cK$~-- holds if and only if $D>D_*$.
%
%Let us finally analyse the behaviour of $c_*(D,L)$ as $DL^2\to+\infty$.
%For that, remembering that $c_*(D,L)$ goes to infinity as $D\to+\infty$, 
Now,  having in mind the conclusions \eqref{aD}-\eqref{cL}
of the benchmark in Section~\ref{sec:Benchmark}, we look for $a$, $b$, $c$ in \eqref{e4.3} under the~form
$$
a=\frac{\cK\alpha}{\sqrt{2dD L^2\langle x^2K\rangle }},\qquad b=\frac{\beta}{2d},
\qquad c=\cK \sqrt{\frac{D L^2\langle x^2K\rangle}{2d}}\,w.
$$
with $\alpha,\beta\in\RR$, $w>0$,
%\nota{j'ai utilisé d'autres variables, c'est correct?}, 
where $\langle x^2K\rangle:=\int_{\RR}K(x)x^2dx$.
 The condition $b>-\nu/d$ reads $\beta>-2\nu$.
We know from~\eqref{ainftyto0} that any solution of~\eqref{e4.3}
satisfies $a\to0$ as $DL^2\to+\infty$, hence we can write $\vp_L(a)=\frac12\vp_L''(0)a^2+o(1)$.
Recalling that $\vp_{L}''(0)=L^2\langle x^2K\rangle$,
we end up with the following reduced system,  as $DL^2\to+\infty$:
\begin{equation}
\label{e5.502}
\left\{
\begin{array}{rcl}
-f'(0)\alpha^2+o(1)+2f'(0)w\alpha+\di\frac{\mu \beta}{2\nu+\beta}&=&0\\
\di 2w\alpha-\frac{2d}{D L^2\langle x^2K\rangle }\,\alpha^2-\frac{\beta^2}{\cK^2} &=&1.
\end{array}
\right.
\end{equation}
Let us neglect for a moment the term $o(1)$ in the first line and the one in $\alpha^2$ in the second line (whose coefficient 
tends to $0$ as $DL^2\to+\infty$). We get
\begin{equation}
\label{reduced}
\left\{
\begin{array}{rcl}
\alpha^2-2 w\alpha &=& \di\frac{\mu \beta}{2\nu f'(0)+ f'(0)\beta}\\
\alpha &=& \di \frac{1+\beta^2/\cK^2}{2w}.
\end{array}
\right.
\end{equation}
The first equation describes a curve in the $(\beta,\alpha)$ plane with asymptotes
$\alpha=\blue{w}\pm\sqrt{w^2+\mu/f'(0)}$,
while the second one is a parabola. One sees that the two curves do not intersect for~$w$~small,
and do intersect for $w$ large. 
There exists then a positive minimal value of $w$, that we call $w_*(d,\mu,\nu)$, 
such that the system has a solution. From this, one eventually deduces that the minimal value of $w$
for which the complete system~\eqref{e5.502} admits solution converges to $w_*(d,\mu,\nu)$
as $DL^2\to+\infty$. Reverting to the original parameters, we have shown that
\begin{equation}
\label{w*}
\lim_{DL^2\to+\infty}\frac{c_*(D,L)}{\sqrt{DL^2}}=
\sqrt{2f'(0)\langle x^2K\rangle}\,w_*(d,\mu,\nu),
\end{equation}
that is~\eqref{c*DL}.
\end{proof}
%This implies the last assertion of Theorem \ref{t2.1}, which asserts that when $DL^2$ is large,
%$c_*(D,L)$ is in fact a positive multiple of the Fisher-KPP spreading speed with nonlocal diffusion and intensity $D$.

Next, for $D>D_*$, we derive the existence of some generalised 
subsolutions, with bounded support, that move with speed slightly smaller than $c_*(D,L)$,
where $D_*$ and $c_*(D,L)$ are given in Lemma~\ref{lem:super}.

\begin{Lemma}\label{lem:sub}	
	For $D>D_*$, there is a sequence $c\nearrow c_*(D,L)$ with associated 
	pairs of continuous, nonnegative and not identically to~$0$ functions $u_c,v_c$,
	compactly supported in $\RR$ and $\RR\times[0,+\infty)$ respectively, such that
	$$k\big( u_c(x-ct), v_c(x-ct,y)\big)$$ is a generalised 
	subsolution to~\eqref{e1.2} for $k>0$ small enough. 
\end{Lemma}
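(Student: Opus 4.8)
The plan is to construct, for each speed $c<c_*(D,L)$ sufficiently close to $c_*(D,L)$, a compactly supported generalised subsolution by truncating and perturbing the exponential plane waves from Lemma~\ref{lem:super}. Since $D>D_*$, the disk $\mc{E}_2$ and the region $\mc{E}_1$ genuinely overlap for every $c$ slightly below $c_*(D,L)$ (the tangency at $c=c_*(D,L)$ opens into a transversal intersection with nonempty interior as $c$ decreases). First I would exploit this to produce, for such $c$, \emph{two distinct} solutions $(a_1,b_1)\neq(a_2,b_2)$ of~\eqref{e4.3} lying in the admissible region $b>-\nu/d$, both with $a_i>0$. Each gives a genuine plane-wave solution $e^{-a_i(x-ct)}(1,\gamma_i e^{-b_i y})$ of the linearised system~\eqref{e4.1}, with $\gamma_i=\mu/(\nu+db_i)>0$.

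The standard device is then to form a difference of the two exponentials to obtain a function that is positive on a half-line and becomes negative beyond a point, so that its positive part is compactly supported after a further cutoff. Concretely I would look for a subsolution of the shape
\begin{equation}
\label{subcombo}
\big(\tilde u_c(\xi),\tilde v_c(\xi,y)\big)=\big(e^{-a_1\xi}-Ae^{-a_2\xi},\ \gamma_1 e^{-a_1\xi-b_1y}-A\gamma_2 e^{-a_2\xi-b_2y}\big),
\qquad \xi:=x-ct,
\end{equation}
for a suitable constant $A>0$ and, say, $a_1<a_2$. Being a linear combination of solutions to the \emph{linear} system~\eqref{e4.1}, this pair solves~\eqref{e4.1} exactly; the point is to arrange that it is positive on an interval and that, where it is positive, it lies below~$1$ so that the KPP inequality $f(v)\geq f'(0)v$ promotes it to a subsolution of the genuinely nonlinear system~\eqref{e1.2}. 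I would then set $u_c:=(\tilde u_c)^+$ and $v_c:=(\tilde v_c)^+$, the positive parts, and verify that taking the maximum with $0$ produces a \emph{generalised} subsolution in the sense of the footnote in Section~\ref{s2bis}: the ellipticity property~\eqref{MP} guarantees that $\J$ respects this truncation, and the exchange conditions must be checked to hold with the correct inequality sign. Finally multiplying by a small $k>0$ keeps $kv_c\leq 1$ so that the KPP bound applies, yielding the claimed generalised subsolution $k(u_c(x-ct),v_c(x-ct,y))$.

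The main obstacle I anticipate is not the algebra of combining two exponentials but ensuring three things simultaneously and compatibly: (i) that the positive region of~\eqref{subcombo} is genuinely bounded in $\xi$ (which forces the correct ordering of $a_1,a_2$ and a correct choice of the amplitude $A$, and requires controlling the sign of $\tilde v_c$ \emph{uniformly in $y\geq0$}, since the two $y$-decay rates $b_1,b_2$ differ and could spoil positivity or boundedness of support in the field); and (ii) that the exchange boundary condition on $\{y=0\}$, namely $-d\partial_yv=\mu u-\nu v$, survives truncation with the subsolution inequality ``$\leq$'' rather than being violated at the free boundary where the positive part meets zero. The delicate point is that truncating to the positive part may create a downward kink that is favourable for the interior equations (by~\eqref{MP} for $\J$, and by the strong maximum principle for $-d\Delta$) but whose effect on the oblique derivative condition must be argued carefully; this is exactly why one passes to generalised subsolutions, built as maxima, rather than hoping for a classical one.

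I would organise the proof as follows: first fix $c<c_*(D,L)$ close to the threshold and extract the two intersection points $(a_i,b_i)$ from the transversality of $\Gamma_1$ and $\Gamma_2$ established geometrically in Lemma~\ref{lem:super}, letting $c\nearrow c_*(D,L)$ so the two points merge; second, choose $A$ so that $\tilde u_c$ and $\tilde v_c$ vanish at a common $\xi$-threshold and are positive to its left, giving compact support after cutoff on the right (and noting the left side is handled by the symmetry $x\mapsto -x$ of the problem, so one ultimately works on a bounded $\xi$-interval by combining the rightward and leftward waves into a generalised subsolution); third, verify the three differential inequalities of~\eqref{e1.2} on the positive region using the KPP hypothesis~\eqref{KPP} and~\eqref{MP}; fourth, invoke the generalised-subsolution framework to absorb the kinks, and rescale by small $k$. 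Much of this parallels the construction in~\cite{BRR2} for the local case, the only genuinely new input being that the exponential ansatz remains an eigenfunction of $\J$ (via $\vp_L$) and that~\eqref{MP} plays the role of ellipticity under truncation; I would lean on that parallel to keep the routine verifications brief.
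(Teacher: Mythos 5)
There is a decisive gap at the very first step: your geometry of the intersection is backwards. In the proof of Lemma~\ref{lem:super}, the disk $\mc{E}_2$ grows with $c$ and $c_*(D,L)$ is defined as the \emph{first} value of $c$ at which $\mc{E}_2$ meets $\mc{E}_1$ (tangentially); the tangency opens into a transversal intersection with two distinct real roots as $c$ \emph{increases} past $c_*(D,L)$, not as it decreases. For $\cK\leq c<c_*(D,L)$ the system~\eqref{e4.3} has \emph{no} real solutions $(a,b)$ at all, so the two real plane waves on which your entire construction rests do not exist. This is precisely why the paper is forced to pass to \emph{complex} roots: it penalises the system (replacing $f'(0)$ by $f'(0)-\delta$ and imposing a Dirichlet condition at $y=Y$, system~\eqref{penalised}), applies Rouch\'e's theorem near the tangency point to produce, for $c<c_*(D,L)$ close to $c_*(D,L)$, a pair of complex conjugate roots with small imaginary part, and takes the \emph{real part} of the resulting complex plane wave. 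The real part oscillates in $x$, so its positivity set is a union of bounded intervals of length $\pi\omega_c$ with $\omega_c\to+\infty$, and one truncates to a single such interval.

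Even setting that aside, the combination $e^{-a_1\xi}-Ae^{-a_2\xi}$ with $0<a_1<a_2$ changes sign exactly once (it is positive on a half-line $\xi>\xi_0$ and negative for $\xi<\xi_0$), so its positive part is never compactly supported; a further ``cutoff on the right'' at a point where the function is still positive creates an upward jump that destroys the subsolution property (taking the maximum with $0$ only yields a generalised subsolution where the original function is nonpositive outside the retained region). Finally, your appeal to~\eqref{MP} to dispose of the nonlocal term after truncation is not sufficient: $\J u_c\neq\J\tilde u_c$ even inside the support of $u_c$, and the paper must argue separately that $\J u_c\geq\J\tilde u_c$ there, using that the discarded part of $\tilde u_c$ is nonpositive on the $L$-neighbourhood of the retained interval (which requires $\pi\omega_c\geq L$). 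So the correct proof needs three ingredients absent from your outline: the penalisation, the complex-root/real-part construction of an oscillating profile, and the explicit sign comparison for $\J$ under truncation.
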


\begin{proof}
The first step is to find a sequence $c\nearrow c_*(D,L)$ such that, for any of such $c$'s,
the system
\begin{equation}
	\label{penalised}
	\left\{
	\begin{array}{rll}
		\partial_t u-D\J u\,=&\nu v-\mu u &\quad(t>0,\ x\in\RR,\ y=0)\\
		\partial_tv-d\Delta v\,=&(f'(0)-\delta)v &\quad(t>0,\ x\in\RR,\ 0<y<Y)\\
		-d\partial_yv\,=&\mu u-\nu v &\quad(t>0,\ x\in\RR,\ y=0)\\
		v= &0&\quad(t>0,\ x\in\RR,\ y=Y)\\
	\end{array}
	\right.
\end{equation}
with $\delta>0$ sufficiently small and $Y>0$ sufficiently large, admits a sign-changing solution of
the form 
$$\big(\tilde u_c(x-ct),\tilde v_c(x-ct,y)\big).$$
In addition, we will have that the sets where $\tilde u_c$ and $\tilde v_c$ are positive
 have bounded connected components and satisfy
%$$U_c:=\{x\in\RR\ :\ \tilde u_c(x)>0\},\qquad
%V_c:=\{(x,y)\in\RR\times[0,+\infty)\ :\ \tilde v_c(x,y)>0\},$$
\begin{equation}
	\label{u>0}
\{x\in\RR\ :\ \tilde u_c(x)>0\}=( 2k\pi\omega_c-\pi\omega_c/2,2k\pi\omega_c+\pi\omega_c/2),\quad k\in\Z,
\end{equation}
\begin{equation}
	\label{v>0}
	\{x\in\RR\ :\ \tilde v_c(x,0)>0\}=(2k\pi\omega_c+\vartheta_c-\pi\omega_c/2,
2k\pi\omega_c+\vartheta_c+\pi\omega_c/2),\quad k\in\Z,
\end{equation}
where $|\vartheta_c|\leq \pi\omega_c$ and $\omega_c\to+\infty$ as $c\nearrow c_*(D,L)$.
We postpone this first step to the Appendix. 
%~\ref{sec:Rouche}.

Having the functions $\tilde u_c,\tilde v_c$ at hand, one needs to truncate their support.
Call $U_c:=( -\pi\omega_c/2, \pi\omega_c/2)$ and $V_c$ the connected component of $\{\tilde v_c>0\}$
such that $V_c\cap(\RR\times\{0\})=(\vartheta_c-\pi\omega_c/2,\vartheta_c+\pi\omega_c/2)$, then define
$$u_c:=\begin{cases}
\tilde u_c & \text{in }U_c\\
0 & \text{outside}
\end{cases},\qquad
v_c:=\begin{cases}
\tilde v_c & \text{in }V_c\\
0 & \text{outside}
\end{cases}.$$
Since $u_c\geq \tilde u_c$ in $V_c\cap (\RR\times\{0\})$, one has that~$v_c(x-ct,y)$ is a generalised subsolution
of the linear parabolic problem with Robin boundary condition given by the first two equations in~\eqref{penalised},
with $u=u_c$.
Instead, the first equation in~\eqref{penalised} has to be handled more carefully due to the nonlocal term,
since $\J u_c\neq \J\tilde u_c$ even in the region $U_c$ where~$u_c\equiv\tilde u_c$.
%\nota{ajouter un commentaire 
%dans l'intro sur cette
%difficulte' additionnelle p/r au cas local}.
However, for $x\in U_c$, there holds that
\[\begin{split}
\J u_c(x) &=\int_{(x-L,x+L)\cap U_c} K_L(x-x')\tilde u_c(x')dx'-\tilde u_c(x)\\
&= \J \tilde u_c(x)-\int_{(x-L,x+L)\setminus U_c} K_L(x-x')\tilde u_c(x')dx',
\end{split}\]
and we have $(x-L,x+L)\subset(-\pi\omega_c/2-L,\pi\omega_c/2+L)$. 
If~$c$ is large enough so that $\pi\omega_c\geq L$,  one has that 
 the latter set is contained in $[-3\pi\omega_c/2,3\pi\omega_c/2]$, and thus, being $\tilde u_c\leq0$
in~$[-3\pi\omega_c/2,3\pi\omega_c/2]\setminus U_c$, we deduce that $\J u_c\geq\J \tilde u_c$ in~$U_c$.
We also clearly have $\J u_c\geq0$ outside $U_c$. Summing up, we have that for $c$ sufficiently large,
$(u_c(x-ct),v_c(x-ct,y))$ is a generalised subsolution to~\eqref{penalised},
hence to~\eqref{e1.2} up to multiplication by a small $k>0$.
\end{proof}

\begin{proof}[Proof of Theorem \ref{t2.1}]
Let us show that the two limits in~\eqref{spreading}
hold with $c_*:=\cK$ if $D\leq D_*$ and  $c_*:=c_*(D,L)$ if $D> D_*$,
where $D_*$ and $c_*(D,L)$ are given by Lemma~\ref{lem:super}.
By reason of symmetry in the $x$ variable, it is sufficient to derive them for $x\geq0$.
The second limit immediately follows by comparison with the plane waves provided by Lemma~\ref{lem:super}
(which are super-solutions to the nonlinear problem~\eqref{e1.2} thanks to the KPP hypothesis).
% and the symmetry of the problem with respect to $x$.
For the first one, we make use of the sub-solutions $k( u_c(x-ct), v_c(x-ct,y))$
provided by Lemma~\ref{lem:sub} in the case $D>D_*$, for $k$ sufficiently small and $c<c_*$ which can be taken 
arbitrarily close to $c_*$. In the case $D\leq D_*$, since $c_*=\cK$,
the existence of a compactly supported sub-solution $v_c$ moving with a speed $c<c_*$ 
is standard for the Fisher-KPP equation $\partial_tv-d\Delta v=f(v)$, and thus one can simply neglect 
the equations on the line and take $u_c\equiv0$.
%We then consider the solution $(u,v)$ of the Cauchy problem~\eqref{e1.2},\eqref{e2.100}
We then decrease $k$ if need be in order to have in addition that $k( u_c(x), v_c(x,y))\leq(u(1,x),v(1,x,y))$ 
for all $(x,y)\in\RR_2^+$.
We deduce, by comparison,
$$\big(u(1+\tau,c\tau),v(1+\tau,c\tau,y)\big)\geq \big(u_c(0),v_c(0,y)\big),
\quad\forall\tau\geq0,\ y\geq0.$$
Then, applying Proposition~\ref{t2.2} to the solution with initial datum $(u_c,v_c)$ we get, always by comparison,
$$\liminf_{t\to+\infty}\Big(\inf_{\tau\geq0}\big(u(1+\tau+t,c\tau),v(1+\tau+t,c\tau,y)\big)\Big)\geq \Big(\frac\nu\mu,1\Big),
$$
locally uniformly with respect to~$y\geq0$,
from which, calling $s:=1+\tau+t$  we derive
%$$\liminf_{t\to+\infty}\Big(u\Big(t,\frac\lambda{\lambda+1}c(t-1)\Big),v\Big(t,\frac\lambda{\lambda+1}c(t-1),y\Big)\Big)
%\geq \Big(\frac\nu\mu,1\Big).
%$$
$$\liminf_{t\to+\infty}\Big(\inf_{s\geq t+1}\big(u(s,c(s-t-1)),v(s,c(s-t-1),y)\big)\Big)\geq \Big(\frac\nu\mu,1\Big).
$$
%Up to rescaling time, it is not restrictive that this comparison
This yields
$$\liminf_{t\to+\infty}\Big(\inf_{x\in[0,c't]}\big(u(t,x),v(t,x,y)\big)\Big)\geq \Big(\frac\nu\mu,1\Big),
$$
for any $0<c'<c$. 
%The above limit holds true for $x\in[0,c]$ owing to Proposition~\ref{t2.2}.
We recall, on the other hand, that $\limsup_{t\to+\infty}(u,v)\leq(\nu/\mu,1)$ uniformly in space, as seen in the 
proof of Proposition~\ref{t2.2}.
The first limit in~\eqref{spreading} then follows from the fact that $c'$ and $c$ can be taken arbitrarily close to $c_*$.
%The existence of a plane wave {\em supersolution} for~\eqref{e4.1} with a speed $c$ readily entails that the asymptotic spreading 
%speed $c_*$
%for~\eqref{e1.2} is no larger than $c$, and actually, with the same arguments as
%in~\cite{BRR2}, one shows that $c_*$ coincides with the minimal
%speed $c$ of plane wave {supersolutions}.
\end{proof}

Let us dwell a little bit on the quantity $w_*(d,\mu,\nu)$ appearing in~\eqref{w*}. Computing the curves 
at $\beta=0$ one infers that $w_*(d,\mu,\nu)\leq1/2$.
Observe that the parameter $d$ affects $w_*(d,\mu,\nu)$ only through the term $c_K$ in the second equation, 
which modulates the opening of the parabola. One deduces that $w_*(d,\mu,\nu)$  is decreasing with respect to~$d$ 
and tends to $1/2$ as $d\to0^+$, and to the unique solution $w$ of the equation 
$w+\sqrt{w^2+\mu/f'(0)}=\frac{1}{2w}$ as $d\to+\infty$.
Notice, however, that the limit as $d\to+\infty$ has not real meaning for the model \eqref{e1.2},
because the reduction to~\eqref{reduced} is not justified since the term neglected in the second equation of~\eqref{e5.502}
becomes large as $d\to+\infty$ (and  we indeed know that $c_*(D,L)\geq\cK\to+\infty$).

%
%converges to 
% respect to $d$ is only 
%One could wonder whether the above right-hand side blows up, or vanishes, as $d$ --~hence $\cK$~-- 
%becomes small or large. We will not investigate it in detail here, but let us point out that there is at least one distinguished limit where $w_*(d,\mu,\nu)$ is nontrivial, that is, is insensitive to the variations of $\cK$. Let us indeed assume
%$$
%\mu=\nu,\ \ \mu\gg\max(1,\cK),
%$$
%that is, in this limit, we have  $u= v$ on the road,  the unknown $u$ disappears, and the equation on the road becomes
%$$
%v_y+D\mathcal{J}v-dv_y=0.
%$$
%Thus the first equation in \eqref{e5.502} becomes
%$$
%-\alpha^2+w\alpha+d\beta=0,
%$$
%which yields a limit that is independent of $\cK$. The conclusion of these considerations is that the road, indeed, communicates the full strength of its diffusion to the field,
%and we have, in the end,
%\begin{equation}
%\label{e5.103}
%c_*(D,L)\sim_{DL^2\to+\infty}w_*(d,\mu,\nu)\sqrt{\langle x^2K\rangle dDL^2}.
%\end{equation}
%

%-----------------------------------------------------------------------------------
%\input{Sections/Computation}

%\section{Propagation enhancement by the line of nonlocal diffusion} \label{sec:computation}
%\subsection{A benchmark: Fisher-KPP front propagation with nonlocal diffusion}

%-----------------------------------------------------------------------------------
%\input{Sections/Epidemics}

\section{The $SIRT$ model for epidemics along a line with nonlocal diffusion}\label{s6}
\subsection{A benchmark: $SIR$-type model with nonlocal diffusion}
Consider a standard $SIR$ model, where the population lives on the real line $\RR$,  
with an initially homogeneous population of susceptibles, and where the infected may move according to nonlocal diffusion
and are initially confined in a bounded region. This yields the following system for the 
density of susceptibles  $S(t,x)$ and of infected~$I(t,x)$~:
\begin{equation}
\label{e4.300}
\left\{
\begin{array}{rcll}
\partial_t S& =&-\beta SI\ & (t>0,x\in\RR)\\
\partial_tI-D\mathcal{J}I &=&\beta SI-\alpha I & (t>0,x\in\RR)\\
\end{array}
\right.
\end{equation}
completed with the initial condition $(S,I)(0,x)=(S_0,I_0(x))$,
where $S_0$ is a positive constant and $I_0$ is non-negative and compactly supported. The integrated density 
$$
u(t,x):=\int_0^tI(s,x)ds
$$
solves the following nonlocal equation with non-homogeneous right $\text{hand-side}$:
\begin{equation}
\label{e4.302}
u_t-D\mathcal{J}u=f(u)+I_0(x),
\end{equation}
with $f$ given by~\eqref{f(v)}. This is the same equation as~\eqref{e3.100} with the addition of the compact perturbation~$I_0$.
If~$R_0:=S_0\beta/\alpha$ is larger than~$1$ then $f$ is of the Fisher-KPP-type.
It~turns out that $I_0$ does not affect the large time/space dynamic of the solution, and in fact the
asymptotic spreading speeds for~\eqref{e3.100} and~\eqref{e4.302} coincide. 
Namely, the spreading speed for~\eqref{e4.302}
 is the minimal $c>0$ such that the transcendental equation~\eqref{e3.103}, that now reads
%\begin{equation}
%\label{e4.303}
$-D\vp_L(a)+ca=\alpha(R_0-1)$,
%\end{equation} 
admits a solution $a>0$.

\subsection{The influence of $R_0$ and other parameters}
 
  \blue{
  Here we investigate the influence of the parameters of the system~\eqref{e1.1} on the speed~$\cSIRT$. We are particularly concerned with situations when $R_0$ is close to 1, and $D$ or $L$ are large. 
  As a preliminary step, we reduce the system to one with non-dimensional parameters.
  Namely, we introduce the non-dimensional space and time variables:
  }
\[
%\label{e.param.1}
t=\frac\tau\alpha,\qquad (x,y)=\sqrt{\frac{d}\alpha}(\xi,\zeta),
\]
while the unknowns $T(t,x)$ and $I(t,x,y)$ are expressed as
{\[
%\label{e.param.2}
S(t,x,y)=S_0\SST(\tau,\xi,\zeta),\qquad I(t,x,y)=S_0\II(\tau,\xi,\zeta),\qquad {T(t,x)=S_0\sqrt{\frac{d}\alpha}\TT(\tau,\xi)}.
\]
Notice indeed that $T$ is a density per unit length, while $I$ and $S$ are densities per unit surface.}
%the functions $\SST,\II$ and $\TT$ have dimension $(\text{space})^{-2}$.
We then consider the following five non-dimensional parameters:
\[
\DD=\frac{D}{\alpha},\qquad\Lambda=L\sqrt{\frac{\alpha}d},\qquad R_0=\frac{\beta S_0}\alpha,
\qquad \bar\nu=\frac{\nu }{\sqrt{\alpha d}},\qquad \bar\mu=\frac\mu\alpha.
\]
The speed $\cSIR$ for the model without the road is expressed as
\[
%\label{e.param.7}
\cSIR=\sqrt{d\alpha}\,\wSIR\qquad \text{with }\;\wSIR:=2\sqrt{R_0-1}
\]
(which is non-dimensional).

%\begin{Luca}
%Il me semble que la definition de $\bar\nu$ devrait etre la suivante:
%$$\bar\nu:=\frac{\nu}{\sqrt{\alpha d}}$$
%(qui est bien non-dimensional).
%
%\end{Luca}

System \eqref{e1.1} is then rewritten as
\[
%\label{e.param.4}
\left\{
\begin{array}{rll}
\partial_\tau\II -\Delta \II + \II =&R_0 \SST\II &(\tau>0,\ (\xi,\zeta)\in\RR^2_+)\\
\partial_\tau\SST=&-R_0 \SST\II &(\tau>0,\ (\xi,\zeta)\in\RR^2_+)\\
-\partial_\zeta\II =&\bar\mu \TT -\bar\nu \II &(\tau>0,\ \xi\in\RR,\ \zeta=0)\\
{\partial_\tau\TT-\DD\tilde\J\TT}=&\bar\nu \II-\bar\mu \TT
&(\tau>0,\ \xi\in\RR,\ \zeta=0),
%\\
\end{array}
\right.
\]
where the integral operator $\tilde\J$ is given by
$$
\tilde\J\TT(\xi):=\int_\RR K_\Lambda(\xi-\xi')\big(\TT(\xi')-\TT(\xi)\big)d\xi'.
$$
\blue{The initial datum is $\SST_0\equiv1$, $\II_0=I_0/S_0$, $\TT_0\equiv0$.}

\blue{
We are interested in the limit %$DL^2\to \infty$ together with 
$R_0\to 1^+$.  For this, we adapt the computation of Section~\ref{spreadingspeed} to the $SIRT$ model. Compare in particular 
 formula~\eqref{w*}.
 %$$\lim_{\DD\Lambda^2}\frac{\wSIRT}{\sqrt{\DD\Lambda^2}}=\wSIR\,\sqrt{\frac{\langle x^2K\rangle}2}\,w_*(1,\bar\mu,\bar\nu).$$
% qui est exactement~\eqref{e.param.12}.
 For the sake of completeness and  in order to highlight the role of the new parameter $R_0$, we detail the argument.
}

The integrated quantities
$$
\UU(\tau,\xi):=\int_0^\tau\TT(\sigma,\xi)d\sigma,\qquad \VV(\tau,\xi,\zeta):=
\int_0^\tau\II(\sigma,\xi,\zeta)d\sigma
$$
will then solve
\begin{equation}
\label{e.param.5}
\left\{
\begin{array}{rll}
\partial_\tau\UU-\DD \tilde\J\UU= & \bar\nu \VV(\tau,\xi,0)-\bar\mu\UU\quad(\tau>0,\ \xi\in\RR)\\
\partial_\tau \VV-\Delta \VV=&\tilde f(\VV)+\II_0(\xi,\zeta)\quad(\tau>0,\ \xi\in\RR,\ \zeta>0)\\
-\partial_\zeta \VV(\tau,\xi,0)=&\bar\mu \UU(\tau,\xi)-  \bar\nu \VV(\tau,\xi,0)\quad(\tau>0,\ \xi\in\RR).
\end{array}
\right.
\end{equation}
The function $\tilde f$ is given by 
$\tilde f(\VV):=1-e^{-R_0\VV}-\VV$, so that, $\tilde f'(0)=R_0-1=\di\frac{\wSIR^2}4$.
% The initial quantities $\II_0$ and $\TT_0$ have obvious meanings. 
The minimal reduced speed for \eqref{e.param.5}, that we call $\wSIRT$, is  the least $w$ for which the system in $a,b$
\begin{equation}
\label{e.param.7}
\left\{
\begin{array}{rll}
{-\DD \varphi_\Lambda(a)+wa+\di\frac{\bar\mu b}{\bar\nu+b}}=&0\\
-(a^2+b^2)+wa=\di\frac{\wSIR^2}4
\end{array}
\right.
\end{equation}
has solutions, \blue{where $\varphi_\Lambda(a)=\vp_{1}(a\Lambda)$ is given by \eqref{e3.2}.}

%$$
%\varphi_\Lambda (a)=2\int_0^{+\infty}K(\xi')(\cosh(a\xi')-1)d\xi'.
%$$
{Since $R_0$ is only slightly larger than 1, $\wSIR$ is now a small positive parameter. From the second equation of \eqref{e.param.7} we surmise  that $a$  will be much smaller than $b$. Indeed, we expect $w$ to be much larger than $\wSIR$,
while $wa$ and $b^2$ should be of the order $\wSIR^2$.  We set:}
\begin{equation}
\label{e.param.10}
a={\wSIR}\bar a,\ \ b=\wSIR\bar b,\ \ w=\wSIR\bar w,
\end{equation}
and we introduce the new parameter
\begin{equation}
\label{e.param.9}
\lambda :=\frac{\bar\mu}{\bar\nu\wSIR} \; .
%,\ \ \rho=\frac{\wSIR}{\bar\nu}.
\end{equation}
This leads to the final reduced system
 (a rigorous justification would be easy, {\it via} the Implicit Functions Theorem):
\begin{equation}
\label{e.param.11}
\left\{
\begin{array}{rll}
-\di\frac{\DD}{4(R_0-1)}\vp_1(2\Lambda\sqrt{R_0-1}\,\bar a)+\bar w\bar a+{\lambda\bar b}=&0\\
-\bar b^2+\bar w\bar a=\di\frac14.
\end{array}
\right.
\end{equation}
The second equation is the standard parabola $\Gamma_{2,\bar w}$
$$
\bar a=\frac1{\bar w}\biggl(\frac14+\bar b^2\biggl)=:h(\bar w,\bar b).
$$
A final observation will allow us an easy treatment of the first equation. 
We notice that the second and third terms are expected to be of finite size, so that the first one should also be of finite size.
Given that the ratio $\di\frac{\DD}{R_0-1}$ is large, the term $\vp_1(2\Lambda\sqrt{R_0-1}\,\bar a)$ should be small, 
i.e.~$\Lambda\sqrt{R_0-1}\bar a$ should be small. And so, we may approximate
$$
\frac{\DD}{4(R_0-1)}\vp_1(2\Lambda\sqrt{R_0-1}\,\bar a)\sim M_1\DD\Lambda^2\bar a^2=M_1{\frac{DL^2}d}\bar a^2,\qquad M_1=\int_0^{+\infty}\xi^2K(\xi)d\xi.
$$
Thus, with the same analysis as in Section~\ref{spreadingspeed}, we obtain the existence of a positive bounded function $\WSIRT(\lambda)$ such that 
\begin{equation}
\label{e.param.12}
\lim_{_{DL^2\to+\infty,\,R_0\to1^+}}\sqrt{\frac{d}{4M_1DL^2(R_0-1)}}\,\wSIRT=\WSIRT(\lambda),\ \ \ \lambda=\frac{\bar\mu}{\bar \nu \wSIR}.
\end{equation}

We leave the interpretation of this formula to the Discussion Section~\ref{s7}.

\subsection{Proof of the results on the $SIRT$ model}\label{sec:steady}

We start with the proofs of the Liouville-type result and of the local stability property
contained Theorem~\ref{thm:ltbSIRT}. We can follow exactly the same arguments used in the proofs of \cite[Theorem 3.4, 3.5]{SIRT1},
thanks to the fact that system~\eqref{e2.4} enjoys the comparison principle
and the uniform regularity of the solutions, due to Theorem~\ref{cauchy}. We sketch these arguments below
for completeness.

\begin{proof}[Proof of  Theorem~\ref{thm:ltbSIRT} -- Liouville-type result and stability]
	Let $(u,v)$ be the solution to~\eqref{e2.4}-\eqref{IDuv}.
	Since its initial datum $(0,0)$ is a sub-solution to~\eqref{e2.4}-\eqref{I0},
	the comparison principle implies that $(u,v)$ is non-decreasing in $t$.
	Observe that one can choose $M>v_*$ large enough so that the constant pair
		$M(\nu/\mu,1)$
		is a super-solution to~\eqref{e2.4}.
	It follows that $(u(t,x),v(t,x,y))$ converges as $t\to+\infty$, locally uniformly in space, to a bounded pair 
	$(u^r_\infty(x),v^r_\infty(x,y))$, and moreover, by the uniform regularity of solutions, 
	that $(u^r_\infty(t,x),v^r_\infty(x,y))$ is a (stationary) non-negative solution to~\eqref{e2.4}. 
	It remains to prove the Liouville-type result.
	We distinguish the two cases according to~$R_0$.

	\smallskip
	{\em Case $R_0>1$}.\\
		In this case $f$ fulfils all conditions in~\eqref{KPP} with ``$1$'' replaced by $v_*>0$.
		Hence, since $(u(t,x),v(t,x,y))$ is a super-solution to~\eqref{e1.2}, and 
		$v(t,x,y)>0$ for $t>0$, $x\in\RR$, $y>0$ due to  
			the parabolic strong maximum principle (because $I_0\not\equiv0$),
			 we infer from Proposition~\ref{t2.2} that
		$$
		(u^r_\infty(x),v^r_\infty(x,y))\geq\Big(\frac\nu\mu,1\Big)\,v_*,
		$$
%	\begin{equation}
%	\label{uv>}
%	(u_\infty^r,v_\infty^r)\geq(\nu/\mu,1)v_*.
%	\end{equation}
the right-hand side being the unique positive solution to~\eqref{e1.2}.
	It is then straightforward to see,
	\blue{using the first equation in~\eqref{e1.2} alone},
	 that $v^r_\infty(x,y)\to v_*$ as $y\to+\infty$, uniformly in $x\in\RR$.
	Conversely, taking a diverging sequence 
	$(x_n)_{n\in\N}$ in $\RR$, the limit of the translations 
	$(u_\infty^r(x+x_n),v_\infty^r(x+x_n,y))$ (which exists locally in $(x,y)$ 
	by the uniform regularity of~$(u_\infty^r,v_\infty^r$))
	is a positive, stationary solution to~\eqref{e1.2}, hence by
	Proposition~\ref{t2.2} it coincides with $(\nu/\mu,1)v_*$.
	
	 It remains to prove the Liouville result. Let $(u_1,v_1)$ and $(u_2,v_2)$ be two pairs of
	positive, bounded, stationary solutions to~\eqref{e2.4}.
	Assume by way of contradiction that 
	$$k:=\max\bigg(\sup_{\RR}\frac{u_1}{u_2}\,,\,\sup_{\RR\times\RR^+}\frac{v_1}{v_2}\bigg)>1.$$
	Because of the limits we have just proved, one of the following situations necessarily occurs:
	$$\max_{\RR}\frac{u_1}{u_2}=k,\quad\text{or}\quad
	\max_{\RR\times\RR^+}\frac{v_1}{v_2}=k.$$
	Suppose we are in the latter case.
	Then, it is readily seen using the concavity of $f$ that the maximum 
	cannot be achieved in the interior of $\RR\times\RR^+$.
	Then, it is achieved at some point $(\bar x,0)$, and Hopf's lemma yields
	$$\partial_y(kv_2-v_1)(\bar x,0)>0.$$
	Using the second equation in~\eqref{e2.4}, together with $v_1(\bar x,0)=k v_2(\bar x,0)$, we find that
	$$ku_2(\bar x)=-\frac{kd}{\mu}\partial_y v_2(\bar x,0)+\frac{k\nu}{\mu}v_2(\bar x,0)
	<-\frac{d}{\mu}\partial_y v_1(\bar x,0)+\frac{\nu}{\mu}v_1(\bar x,0)=u_1(\bar x),$$
	which contradicts the definition of $k$.
	Consider the remaining case:
	$$\max_{\RR}\frac{u_1}{u_2}=k>\frac{v_1}{v_2}.$$
	Computing the difference of the equations satisfied by $ku_2$ and $u_1$ at a point 
	$\bar x$ where this maximum is achieved, we derive from~\eqref{MP} the contradiction
	$$0\leq D\mc{J} (ku_2-u_1)(\bar x)=
	\nu (v_1-kv_2)(\bar x,0)-\mu (u_1-ku_2)(\bar x)=\nu (v_1-kv_2)(\bar x,0)<0.$$ 
	We have thereby shown that $k\leq1$, that is, $(u_1,v_1)\leq(u_2,v_2)$.
	Exchanging the roles of the solutions, yields the uniqueness result.
	
	\smallskip
	{\em Case $R_0\leq1$}.\\
	We start with the Liouville-type result 	for non-negative solutions, with possibly $I_0\equiv0$.
	We need to show that, for any
	two pairs $(u_1,v_1)$, $(u_2,v_2)$ of non-negative, bounded, stationary solutions to~\eqref{e2.4},
	it holds that $(u_1,v_1)\leq(u_2,v_2)$. Assume by contradiction that, on the contrary, 
	$$h:=\max\Big(\frac\mu\nu\sup_{\RR}(u_1-u_2)\,,\,\sup_{\RR\times\RR^+}(v_1-v_2)\Big)>0.$$
	Suppose first that $\sup_{\RR\times\RR^+}(v_1-v_2)=h$, and let $((x_n,y_n))_{n\in\N}$
	be a maximising sequence. If $(y_n)_{n\in\N}$ is bounded from below away from $0$, then the functions 
	$v_j(x+x_n,y+y_n)$
	converge locally uniformly (up to subsequences) towards two 
	solutions $\t v_j$ of the equation
	$-d\Delta \t v_j=f( \t v_j)$ in a neighbourhood of the origin.
	Moreover, $(\t v_1-\t v_2)(0,0)=\max(\t v_1-\t v_2)=h$, and thus
	$$0\leq-d\Delta(\t v_1-\t v_2)(0,0)=f(\t v_1(0))-f(\t v_2(0))=f(\t v_2(0)+h)-f(\t v_2(0)).$$
	This is impossible because $f$ is decreasing on $\RR^+$.
	If, instead, $y_n\to0$ (up to subsequences), then the pairs
	$(u_j(x+x_n),v_j(x+x_n,y))$
	converge locally uniformly (up to subsequences) towards two 
	solutions $(\t u_j,\t v_j)$ of the same system, which is of the form~\eqref{e2.4} with $I_0$ either
	translated by some vector $(\xi,0)$, or replaced by $0$.
	Moreover the difference $\t v_1-\t v_2$ attains its maximal value~$h>0$ at~$(0,0)$ and satisfies
	$-d\Delta (\t v_1-\t v_2)=f( \t v_1)-f( \t v_2)$.
	As before, such a maximum cannot be attained also at some interior point, hence by Hopf's lemma 
	$$0>d\partial_y(\t v_1-\t v_2)(0,0)=
	\nu(\t v_1-\t v_2)(0,0)-\mu(\t u_1-\t u_2)(0)=h\nu-\mu(\t u_1-\t u_2)(0).$$
	Since $\sup(\t u_1-\t u_2)\leq \sup( u_1- u_2)$, we get a contradiction with the definition of $h$.
	
	Suppose now that
	\begin{equation}
	\label{h}
	h=\frac\mu\nu\sup_{\RR}(u_1-u_2)>\sup_{\RR\times\RR^+}(v_1-v_2).
	\end{equation}
	Consider a maximising sequence $(x_n)_{n\in\N}$ for $u_1-u_2$, then
	the limits (up to subsequences) $(\t u_j,\t v_j)$ of the translations 
	$(u_j(x+x_n),v_j(x+x_n,y))$, which, once again, satisfy a system analogous to~\eqref{e2.4}.
	The difference $\t u_1-\t u_2$ attains its maximum $\frac\nu\mu h$ at the origin, whence
	by~\eqref{MP}
		$$0\leq -D\mc{J} (\t u_1-\t u_2)(0)=
	\nu (\t v_1-\t v_2)(0,0)-\mu (\t u_1-\t u_2)(0)=\nu (\t v_1-\t v_2)(0,0)-\nu h.$$ 
	This contradicts~\eqref{h}	
	The proof of the Liouville result  is concluded. 
	
	Let us pass to the limits at infinity.
	The limit $v_\infty^r(x,y)\to0$ as $y\to+\infty$, uniformly with respect to $x$,
	readily follows from the negativity of $f$ on~$(0,+\infty)$.
	Consider now a diverging sequence $(x_n)_{n\in\N}$ in $\RR$.
	The sequence of translations 
	$(u_\infty^r(x+x_n),v_\infty^r(x+x_n,y))$ converges locally uniformly (up to subsequences)
	towards a bounded, stationary solution $(\t u,\t v)$ to~\eqref{e2.4} with $I_0\equiv0$.
	We apply the Liouville-type we have just proved and infer that $(\t u,\t v)\equiv(0,0)$.
	This concludes the proof of the theorem.	
  \end{proof}

We conclude the proof of Theorem~\ref{thm:ltbSIRT} using the plane wave solutions of Section~\ref{spreadingspeed}.
\begin{proof}[Proof of Theorem~\ref{thm:ltbSIRT} -- exponential decay] 
%As  the steady solutions when the diffusion on the line is $-D\partial_{xx}$ is already discussed in~\cite{SIRT1}, 
%we will not carry out a full study, and rather restrict ourselves to the case $R_0<1$, the case $R_0>1$ being quite similar.  
%The stationary version of system~\eqref{e1.1} is 
%\begin{equation}
%\label{e4.203}
%\begin{array}{rll}
%D\mathcal{J}u+\mu u-\nu v=&0\ (x\in\RR,y=0)\\
%-d\partial_yv=&\mu u-\nu v\ (x\in\RR,y=0)\\
%-d\Delta v=&f(v)\ (x\in\RR,y>0).
%\end{array}
%\end{equation}
Let us start with the case $R_0<1$.
In the first place, we linearise system~\eqref{e2.4} around $v=0$ and we get~\eqref{e4.1}, where $f'(0)=\alpha(R_0-1)<0$.
We look for steady waves of the form~\eqref{expsol} with $c=0$.
%\begin{equation}
%\label{e3.1}
%\left\{
%\begin{array}{rll}
%D\J u+\mu u-\nu v=&0\ \ (x\in\RR)\\
%-dv_y=&\mu u-\nu v\ \ (x\in\RR,\ y=0)\\
%-d\Delta v=&\alpha(R_0-1)v\ \ (x\in\RR,\ y\in\RR_+)
%\end{array}
%\right.
%\end{equation}
%in the exponential form
%$$
%(u(x),v(x,y))=e^{-ax}(1,\gamma e^{-by}),\qquad a,\gamma>0,\ b\in\RR.
%$$
With analogous computation as in Section~\ref{spreadingspeed}, we end up with the system
%$$b=\frac\nu d\left(\frac{\mu}{\mu-D\vp_L(a)}-1\right), $$
%one finds that $\gamma=\mu/(\nu+db)$ 
%and that $a>0,b>-\nu/d$ solve
%~\eqref{e4.3}
\begin{equation}
\label{e3.3}
%\left\{
\begin{cases}
\displaystyle b=\frac\nu d\left(\frac{\mu}{\mu-D\vp_L(a)}-1\right)\\
a^2+b^2 =\di-\frac1df'(0)\\
\gamma=\di\frac\mu{\nu+db}\;.
\end{cases}
%\right.
\end{equation}
%with $\vp_L$ given by \eqref{e3.2}.
%The first equation gives $b=G_1^0(a)$ given by~\eqref{G1}.
%
%\nota{plutot $\int_{-\infty}^{+\infty}$? (ou meme $\int_{-L}^{L}$)}
%In the $(b,a)$ plane, 
%Let $\Gamma_1$ be the curve in the $(a,b)$ plane corresponding to the first equation of \eqref{e3.3} with $b>-\di\frac{\nu}d$. 
%It is the 
%graph of the function
%$$
%b=\frac{\nu D\vp_L(a)}{d(\mu-D\vp_L(a))}
%$$
%defined for $|a|\!<\!a^\infty(D,L)$, where
%$a^\infty(D,L)$ is the only positive root of ${D\vp_L(a)\!=\!\mu}$,
%which is even, nonnegative, vanishes at $0$, is increasing for positive $a$, and has the  vertical
%asymptotes $a=\pm a^\infty(D,L)$.
%The second equation of~\eqref{e3.3} is a circle centred at the origin, that we call~$\Gamma_2$.
Since we need $\gamma>0$, the last equation yields $b>-\nu/d$, which in turns implies, owing to the first equation, 
$|a|\!<\!a^\infty(D,L)$, where
$a^\infty(D,L)$ is the only positive root of ${D\vp_L(a)\!=\!\mu}$.
In the $(a,b)$ plane, the first equation is the graph of a convex, even function $b(a)$ vanishing at $0$ and with
asymptotes $a=\pm a^\infty(D,L)$, while the
second one is a circle centred at the origin with radius~$\sqrt{-f'(0)/d}=\sqrt{\frac{\alpha}d(1-R_0)}$. 
Hence there are two solutions~$(\pm a_*,b_*,\gamma_*)$,
with 
\begin{equation}
\label{a*<}
0<a_*<\min\Big\{a^\infty(D,L),\sqrt{\frac{\alpha}d(1-R_0)}\Big\},\qquad
b_*,\gamma_*>0.
\end{equation}
% and we call $\gamma_*$ the associated~$\gamma$. 
Let us call $(u^\pm,v^\pm)$ the corresponding steady waves.
Take then $\overline k>0$ large enough so that $\overline kv^\pm>v^r_\infty$ in the support of $I_0$.
We finally define
$$\overline u:=\min(u^r_\infty,\overline ku^-,\overline ku^+),\qquad \overline v:=\min(v^r_\infty,\overline kv^-,\overline kv^+).$$ 
Since $\overline v=v^r_\infty$ whenever $I_0>0$, the pair $(\overline u,\overline v)$ is a 
generalised super-solution to~\eqref{e2.4}.
By comparison, the solution $(u,v)$ to~\eqref{e2.4}-\eqref{IDuv} stays below 
$(\overline u,\overline v)$ for all times, and we then deduce from the stability result of
Theorem~\ref{thm:ltbSIRT}
that $(u_\infty^r,v_\infty^r)\leq(\overline u,\overline v)$, i.e.
$$(u_\infty^r(x),v_\infty^r(x,y))\leq \overline ke^{-a_*|x|}(1,\gamma_* e^{-b_*  y}).$$

Let us turn to the lower bound. Steady waves with $b=b_*$, $\gamma=\gamma_*$ and any~$a>a_*$,
are sub-solutions to the linearised problem~\eqref{e4.1}. 
However, in order to get suitable sub-solutions for the nonlinear problem, we penalise $f'(0)$
by a small $\delta>0$ and we also truncate
the domain at a large value $Y$ of~$y$, that is, we consider~\eqref{penalised}.
As shown in the Appendix, this translates into a slight perturbation of the original system~\eqref{e4.1},
namely, for given $a>a_*$, the truncated wave
\begin{equation}
	(\underline u(x),\underline v(x,y))=e^{-ax}\big(1,\gamma_*(e^{-b_*  y}-e^{b_*y-2b_*Y})\big),
\end{equation}
is a sub-solution to the penalised system provided $\delta$ is sufficiently small and $Y$ is large. 
We take $\underline k>0$ small enough so that $\underline k(\underline u,\underline v)$ is a sub-solution to~\eqref{e2.4}
in the half-strip $x>0$, $0\leq y\leq Y$
and, in addition,
$\underline k(\underline u(0),\underline v(0,y))<(u^r_\infty(0),v^r_\infty(0,y))$ for $0\leq y\leq Y$.
Next, take $M$ large enough so that $M(\nu/\mu,1)$ is a super-solution to~\eqref{e2.4} and moreover
$M(\nu/\mu,1)>\underline k(\underline u,\underline v)$ for $x\geq0$, $0\leq y\leq Y$.
%and also $M(\nu/\mu,1)>(u_\infty^r,v_\infty^r)$.
The solution $(\tilde u,\tilde v)$ with initial datum $M(\nu/\mu,1)$ is non-increasing in $t$ and 
converges to a non-negative steady state,
which is necessarily $(u_\infty^r,v_\infty^r)$ owing to the Liouville result.
In particular $(\tilde u(t,0),\tilde v(t,0,y))>\underline k(\underline u(0),\underline v(0,y))$ for $t>0$, 
$0\leq y\leq Y$. We can then apply the comparison principle in the half-strip $x>0$, $0\leq y\leq Y$,
and infer that $(\tilde u,\tilde v)>\underline k(\underline u,\underline v)$ there, for all $t>0$.
Passing to the limit $t\to+\infty$ yields $(u_\infty^r,v_\infty^r)\geq \underline k(\underline u,\underline v)$
for $x\geq0$. The specular estimate for $x<0$ holds true by the symmetry of $(u_\infty^r,v_\infty^r)$.
%$$(u_\infty^r(x),v_\infty^r(x,y))\geq e^{-ax}\big(1,\gamma_*(e^{-b_*  y}-e^{b_*y-2b_*Y})\big)$$
The proof of the case $R_0<1$ is thereby concluded owing to the arbitrariness of $a>a_*$.

In the case $R_0>1$ the argument is analogous.
One linearises the system around $(\nu/\mu,1)v_*$ and gets~\eqref{e3.3} with $f'(0)$ replaced by $f'(v_*)$,
which is also negative. One then finds the super-solutions to~\eqref{e2.4} outside the 
support of $I_0$ in the form $(\nu/\mu,1)v_*+(u^\pm,v^\pm)$,
the sub-solution in the form $(\nu/\mu,1)v_*+\underline k(\underline u,\underline v)$, and concludes as before.
\end{proof}

Let us study the exponential rate $a_*$ in  Theorem~\ref{thm:ltbSIRT} 
for extreme values of $L$, that is, 
when the range of contaminations on the road is large or small.
Let us focus, as in the above proof, on the case $R_0<1$.
Recall that $a_*<a^\infty(D,L)$ by~\eqref{a*<}, where $a^\infty(D,L)$ is defined by
$D\vp_L(a^\infty(D,L))=\mu$.
  Since $\vp_L(a)=\vp_1(aL)$, one derives 
 $a^\infty(D,L)\to0$ as~$L\to+\infty$, hence the same is true for~$a_*$, and then, form the equation of the circle
 in~\eqref{e3.3}, 
$$
b_*\to\sqrt{\frac{\alpha}d(1-R_0)}=:\rho
\quad\text{ as }\;L\to+\infty.
$$
Thus, from the first equation in~\eqref{e3.3} one gets
\begin{equation}
	\label{L>>}
D\vp_L(a_*)=D\vp_1(a_*L)\to\frac{d\mu \rho}{\nu+d\rho}
\quad\text{ as }\;L\to+\infty.
\end{equation}
%whence 
%\begin{equation}
%\label{a*<<1}
%a_*=O(L^{-1}D^{-1/2})\quad\text{ as }\;L\to+\infty
%\end{equation}
If, on the contrary, $L$ is small --~that is, we are close to the classical local diffusion~-- we have
\begin{equation}
	\label{L<<}
D\vp_L(a)=D\vp_{1}(aL)\sim Da^2L^2\langle x^2K\rangle\to0\quad\text{ as }\;L\to0,
\end{equation}
locally uniformly in $a$.
This yields, thanks to the 
first equation in~\eqref{e3.3}, that $b\to0$ as $L\to0$, whence, by the second equation,
that $a_*\to\rho=\sqrt{\frac{\alpha}d(1-R_0)}$ as $L\to0$. This limit indeed coincides with the asymptotic exponential rate 
that holds for the local model, see \cite[Theorem 4.2]{SIRT1}.
In any case, since by \eqref{phi''} we have that $\vp_L(a)=\vp_1(aL)\geq \langle x^2K\rangle a^2L^2$,
% with $C>0$ only depending on $K$,
it follows that 
\begin{equation}
\label{a*<<DL2}
a_*<a^\infty(D,L)\leq \sqrt{\frac{\mu}{\langle x^2K\rangle DL^2}}.
\end{equation}
This shows that $a_*$ can be small, i.e.~the solution has a thick tail, even if $D$ is small, 
provided that $L$ is large, or, on the contrary, if $L$ is small but $D$ is sufficiently large.

We conclude with the proof of Theorem~\ref{thm:cSIRT}, which just consists in 
showing that the compactly supported function $I_0$ does not affect the behaviour of 
the solution far from the origin.
%This will be nothing else
%than the spreading speed $c_*$ provided by Theorem \ref{t2.1}

\begin{proof}[Proof of Theorem~\ref{thm:cSIRT}]
Since $R_0>1$, the function $f$ is of the Fisher-KPP type. 
We call $\cSIRT$ the speed $c_*$ provided by Theorem \ref{t2.1} with such a $f$.
Hence the comparison between $\cSIRT$ and the standard speed $\cSIR$ stated in Theorem~\ref{thm:cSIRT} hold,
with $D_*$ given by~\eqref{D*}. It remains to show that $\cSIRT$ is actually the spreading speed 
for~\eqref{e2.4}-\eqref{I0}.

The pair $(u,v)$ is a super-solution to~\eqref{e1.2}, hence by the comparison principle
and Theorem \ref{t2.1} we infer
		$$
		\forall \e<\cSIRT,\quad
		\liminf_{t\to+\infty}\inf_{\vert x\vert\leq(\cSIRT-\e)t}\big(u(t,x),v(t,x,y)\big)
				\geq\Big(\frac\nu\mu,1\Big)v_*,
		$$
locally uniformly with respect to $y\geq0$ (in formula~\eqref{spreading} one has $v_*=1$ as the positive zero of $f$). 
Recall from Theorem~\ref{thm:ltbSIRT} that~$(\nu/\mu,1)v_*$ is the limit as $x\to\pm\infty$
of the steady state $(u^r_\infty,v^r_\infty)$.
We further know from Theorem~\ref{thm:ltbSIRT} that $(u,v)\to(u_\infty^r,v_\infty^r)$
		as $t\to+\infty$ locally uniformly in space, and, by comparison, that 
$(u,v)\leq(u^r_\infty,v^r_\infty)$.
All this facts together imply the validity of the first limit of Theorem~\ref{thm:cSIRT}.

The second limit directly follows by comparison with the plane waves provided by the case 2 of Lemma~\ref{lem:super}
(recall that $c_*$ in Theorem \ref{t2.1} is precisely given by $c_*(D,L)$
of Lemma~\ref{lem:super}). Indeed, being super-solutions to the linearised system~\eqref{e4.1},
it is clear that they are super-solutions to~\eqref{e2.4} as well, up to multiplication by 
a large constant.
\end{proof}

%-------------------------------------------------------------------------------

\subsection{The case of pure transport on the road}

Another way of analysing a nonlocal effect of a road on the spreading of an epidemic, 
is by considering a pure transport equation on the line.
Namely, we introduce the~system
\begin{equation}
\label{SIRTq}
\left\{
\begin{array}{rll}
\partial_tI-d\Delta I+\alpha I\,=&\beta SI &\quad(t>0,\ (x,y)\in\RR^2_+)\\
\partial_tS\,=&-\beta SI &\quad(t>0,\ (x,y)\in\RR^2_+)\\
-d\partial_yI\,=&\mu T-\nu I &\quad(t>0,\ x\in\RR,\ y=0)\\
\partial_t T+q\partial_x T\,=&\nu I-\mu T &\quad(t>0,\ x\in\RR,\ y=0).
\end{array}
\right.
\end{equation}
where $q\in\RR$ is a given constant. The system for the cumulative densities $u,v$ reads
\begin{equation}
\label{Cauchyq}
\left\{
\begin{array}{rll}
\partial_tv-d\Delta v\,=&f(v)+I_0(x,y) &\quad(t>0,\ (x,y)\in\RR^2_+)\\
-d\partial_yv\,=&\mu u-\nu v &\quad(t>0,\ x\in\RR,\ y=0)\\
\partial_t u+q\partial_x u\,=&\nu v-\mu u &\quad(t>0,\ x\in\RR,\ y=0),
\end{array}
\right.
\end{equation}
with $f$ given by~\eqref{f(v)}.
As in Section \ref{spreadingspeed},
the spreading speed $c_*$ for this new system will be given by the minimal $c$ such that the 
linearised system admits plane wave super-solutions of the type~\eqref{expsol}.
As a matter of fact, because the system is no longer symmetric in the $x$ variable, there will be
two distinct spreading speeds, $c_*^\pm$, one leftward and one rightward. 

\begin{theorem}\label{spreading-q}
 Assume that $R_0>1$. Let $(u,v)$ be the solution to~\eqref{Cauchyq}, \eqref{f(v)}-\eqref{IDuv}. 
% Let $(u(t,x),v(t,x,y))$ be the solution to~\eqref{Cauchyq} 
% with an initial datum $(0,v_0(x,y))$ with $v_0\geq0,\not\equiv0$ smooth and compactly supported. 
	Then, there exist $c_*^\pm>0$ such that, for all $\e>0$, it holds
$$
\liminf_{t\to+\infty}\inf_{-(c_*^--\e)t\leq x\leq(c_*^+-\e)t}\big|(u(t,x),v(t,x,y))\big|>0,
$$
$$
		\lim_{t\to+\infty}\sup_{x\leq-(c_*^-+\e)t}\big|(u(t,x),v(t,x,y))\big|=0,\qquad
		\lim_{t\to+\infty}\sup_{x\geq (c_*^++\e)t}\big|(u(t,x),v(t,x,y))\big|=0,
$$
locally uniformly with respect to $y\geq0$.

In addition, the spreading speeds $c_*^\pm$ satisfy
		$$c_*^\pm\,\begin{cases} = \cSIR & \text{if }\pm q\leq \cSIR\\
							\in(\cSIR,|q|) & \text{if }\pm q>\cSIR
				\end{cases}
				\qquad\text{with }\;\cSIR:=2\sqrt{d\alpha(R_0-1)}.$$	

Finally, $c_*^\pm/|q|$ converge to a positive constant $\kappa_*\in(0,1)$ as $q\to\pm\infty$.
\end{theorem}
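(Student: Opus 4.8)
The plan is to read off $c_*^\pm$ from the plane–wave analysis of the linearised system, following verbatim the scheme of Section~\ref{spreadingspeed}; the only change is that the transport operator replaces the nonlocal one. Inserting the ansatz~\eqref{expsol} into the linearisation of~\eqref{Cauchyq} around $v\equiv0$ and using $q\partial_x e^{-a(x-ct)}=-qa\,e^{-a(x-ct)}$, the symbol of $q\partial_x$ is the \emph{linear, odd} function $a\mapsto qa$, so the dispersion system~\eqref{e4.3} becomes
\begin{equation*}
\left\{
\begin{array}{rcl}
(c-q)a+\di\frac{d\mu b}{\nu+db}&=&0\\
-(a^2+b^2)+\di\frac{ca}d&=&\di\frac{\cK^2}{4d^2},
\end{array}
\right.
\end{equation*}
with $\cK=2\sqrt{df'(0)}=\cSIR$ and the constraint $b>-\nu/d$ (i.e.\ $\gamma>0$). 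Since $qa$ is odd rather than even like $\vp_L$, the reflection $x\mapsto-x$ sends $q$ to $-q$; this is the sole source of the asymmetry and yields $c_*^-(q)=c_*^+(-q)$, so it suffices to treat the rightward speed $c_*^+$, i.e.\ $a>0$.

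Next I would run the geometric discussion of Lemma~\ref{lem:super}. The second equation still gives the disk $\mc{E}_2$ bounded by $\Gamma_2$, centred at $(\frac c{2d},0)$ of radius $\frac{\sqrt{c^2-\cK^2}}{2d}$, nonempty iff $c\geq\cK$ and increasing in $c$; the first equation gives the curve $\Gamma_1$,
\begin{equation*}
b=G_1^c(a):=\frac\nu d\,\frac{(q-c)a}{\mu-(q-c)a},
\end{equation*}
with super-solutions filling $\mc{E}_1=\{b\geq G_1^c(a)\}$, which one checks is again increasing in $c$ for $a>0$. At $c=\cK$, where $\mc{E}_2$ collapses to $(\frac\cK{2d},0)$, this point lies in $\mc{E}_1$ exactly when $G_1^{\cK}(\frac\cK{2d})\leq0$, i.e.\ when $q\leq\cK$, giving $c_*^+=\cSIR$ there. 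When $q>\cK$ the point lies strictly below $\Gamma_1$, so $c_*^+>\cK$; while at $c=q$ the curve $\Gamma_1$ degenerates to the $a$-axis, so the upper half of the (positive–radius) disk sits inside $\mc{E}_1$ and super-solutions exist. By monotonicity and continuity in $c$, the first $c$ with $\mc{E}_1\cap\mc{E}_2\neq\emptyset$ is some $c_*^+\in(\cK,q)$; the strict bound $c_*^+<q$ is exactly the announced, counter-intuitive fact that the front lags behind the transport.

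I would then convert these plane waves into the spreading statement as in Theorem~\ref{t2.1}. The outer limits (extinction for $|x|\geq(c_*^\pm+\e)t$) follow by comparison with the exponential super-solutions, which are super-solutions of the full system~\eqref{Cauchyq} by the concavity (KPP property) of $f$. The inner bound ($\liminf>0$ in the cone $-(c_*^--\e)t\leq x\leq(c_*^+-\e)t$) follows by comparison from below with compactly supported generalised sub-solutions travelling at speeds $c<c_*^\pm$, built by truncation as in Lemma~\ref{lem:sub}; the statement for $x<0$ then comes from $c_*^-(q)=c_*^+(-q)$. I expect this sub-solution construction to be the main obstacle: the line carries a pure transport with no regularising diffusion of its own, so persistence is driven entirely through the exchange with the diffusive field, and one must verify that the truncation still yields admissible generalised sub-solutions in this degenerate setting (this is, however, technically lighter than in Lemma~\ref{lem:sub}, since $q\partial_x$ is local and the delicate nonlocal estimate there is replaced by an elementary sign condition).

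Finally, for $q\to+\infty$ I would rescale as in the proof of~\eqref{c*DL}. Writing $c=\kappa q$, the first equation forces $(1-\kappa)\,aq=\frac{d\mu b}{\nu+db}$, so $a=O(1/q)$ and $b=O(1)$; with $A:=qa$ the system tends, as $q\to+\infty$, to
\begin{equation*}
\left\{
\begin{array}{rcl}
(1-\kappa)A&=&\di\frac{d\mu b}{\nu+db}\\
\di\frac{\kappa A}d-b^2&=&\di\frac{\cK^2}{4d^2},
\end{array}
\right.
\end{equation*}
in the unknowns $A,b>0$. Eliminating $A$ produces a single scalar equation whose right-hand side is fixed while the left-hand side carries a coefficient $\frac{\kappa}{1-\kappa}$ that blows up as $\kappa\to1^-$ and is small as $\kappa\to0^+$; hence there is a minimal solvable $\kappa=\kappa_*\in(0,1)$, and $c_*^+/q\to\kappa_*$. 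The case $q\to-\infty$ follows from $c_*^-(q)=c_*^+(-q)$ with the same $\kappa_*$, and the rigorous passage to the limit is by the implicit function theorem, exactly as in Section~\ref{spreadingspeed}.
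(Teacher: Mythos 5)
Your proposal is correct and follows essentially the same route as the paper: the same algebraic dispersion system $(c-q)a=-\frac{d\mu b}{\nu+db}$, $-(a^2+b^2)+\frac{ca}{d}=\frac{\cK^2}{4d^2}$, the same geometric dichotomy at $c=\cK$ (the collapsed disk at $(\cK/(2d),0)$ lying in $\mc{E}_1$ iff $q\leq\cK$) giving $c_*^\pm=\cSIR$ or $c_*^\pm\in(\cSIR,|q|)$, and the same rescaling $c=\kappa q$, $A=qa$ for the limit $q\to\pm\infty$ yielding $\kappa_*\in(0,1)$. The paper is in fact terser than your write-up — it delegates the sub/super-solution machinery entirely to Section~\ref{spreadingspeed} — so your explicit remarks on the reflection symmetry $c_*^-(q)=c_*^+(-q)$ and on the (lighter, local) truncation of sub-solutions are consistent refinements rather than deviations.
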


Recalling that $S=S_0 e^{-\beta v}$, the above result shows that the epidemic wave moves at the leftward and rightward
asymptotic speeds $c_*^\pm$ respectively.
These speeds are always no less than $\cSIR$, which means that the transport term $q$ on the line
does not slow down the spreading speed in the opposite direction, 
no matter how strong it is.
On the contrary, as soon as the intensity $q$
is larger than the classical speed $\cSIR$, the spreading speed in the direction of the transport
is enhanced, but, however, it never reaches the value of $q$ itself.

\begin{proof}[Proof of Theorem~\ref{spreading-q}]
The problem of the existence of plane wave solutions
%of the type 
%\begin{equation}
%\label{expsol}
%(u(t,x),v(t,x,y))=e^{-a(x-ct)}(1,\gamma e^{-b  y})\qquad %\text{with }\;
%a,\gamma,c>0, \ b\in\RR,
%\end{equation}
for the linearised system reduces to the algebraic system
\begin{equation}
\label{algebraicq}
\left\{
\begin{array}{rcl}
(c-q)a&=&-\di\frac{d\mu b }{\nu+db }\phantom{\frac{b^2}{b^2_{\cK}}}\\
-(a^2+b^2)+\di\frac{ca}d&=&\di\frac{\cK^2}{4d^2}\,.
\end{array}
\right.
\end{equation}
Consider the case $q\leq \cK$.
Recall from Section~\ref{spreadingspeed} 
that the set of solutions $\Gamma_2$ of the second equation in~\eqref{algebraicq} 
is non-empty if and only if
$c\geq \cK$, and that it is a circle. 
For $c=\cK$ the circle reduces to the point $(a,b)=(\cK/(2d),0)$, which satisfies the 
inequality ``$\geq$'' in the first equation  of~\eqref{algebraicq}.
It follows that in such a case,~\eqref{Cauchyq} admits a super-solution of the type
\eqref{expsol} if and only if $c\geq c_*(q):= \cK$.

Next, consider the case $q>\cK$.
For $c\geq q$, the set $\Gamma_2$ is non-empty and
any point $(a,b)\in \Gamma_1$ with $a,b>0$ satisfies the inequality ``$>$'' in 
the first equation of~\eqref{algebraicq},
that is,~\eqref{Cauchyq} admits super-solutions of the form
\eqref{expsol}. Conversely, for $c=\cK$, $\Gamma_2$ reduces to the point $(\cK/(2d),0)$, which 
satisfies ``$<$'' in 
the first equation of~\eqref{algebraicq}, i.e., the corresponding plane wave is a sub-solution to the linearised system.
This means that there is a first value $c=c_*(q)\in(\cK,q)$ at which the two curves in~\eqref{algebraicq}
are tangent,
and which provides us with a plane wave super-solution to~\eqref{Cauchyq}.

Let us investigate the behaviour of $c_*$ as $q\to+\infty$.
We write $c=\kappa q$ with $\kappa>0$ and $\alpha= aq$. The system rewrites as
\begin{equation}
\label{algebraicqBIS}
\left\{
\begin{array}{rcl}
(1-\kappa)\alpha&=&\di\frac{d\mu b }{\nu+db }\phantom{\frac{b^2}{b^2_{\cK}}}\\
-\Big(\frac{\alpha^2}{q^2}+b^2\Big)+\di\frac{\kappa \alpha}d&=&\di\frac{\cK^2}{4d^2}\,.
\end{array}
\right.
\end{equation}
Dropping the term $\alpha^2/q^2$ (that will be justified at the end of the computation)
we get
$$\Big(\frac{\cK^2}{4d^2}+b^2\Big)\frac{1-\kappa}{\kappa}=\frac{d\mu b }{\nu+db }.$$
Consider $k\in(0,1)$. For $\kappa\sim 0$ this equation does not admit solution $b\geq0$, whereas 
 for $\kappa\sim 1$ it does.
 There exists then a minimal $\kappa=\kappa_*\in(0,1)$ for which a solution exists.
 We then recover the same existence result for system~\eqref{algebraicqBIS} when $q\to+\infty$,
 with a minimal value $\kappa_q\to\kappa _*$.
 This shows that $c_*\to\kappa_*q$ as $q\to+\infty$.  
\end{proof}

%-----------------------------------------------------------------------------------

%\input{Sections/Discussion}

\section{Discussion}\label{s7}

 We have proposed and analysed a model that quantifies the effect of a line of fast, nonlocal diffusion, both on the propagation of fronts for models of reaction-diffusion for biological invasions,
%a model for ,
%directed by a line, 
and of the $SIR$ type for  epidemics. Such a modelling for the diffusion processes on the line is relevant, as it makes possible  long range displacements along lines of communications in addition to local ones.  This type of effects is widely recognised to exist. Our aim here is to provide a rigorous formulation of this feature and to analyse it mathematically.
 %sort of movements is known to occur in the phenomena that we wish to model. 
 While we had previously analysed the effect of a line having a diffusion of its own on reaction-diffusion propagation \cite{BRR2},  \cite{SIRT1}, we had  considered there a diffusion given by the standard Laplacian. The nonlocal dispersal that we are proposing here encompasses and confirms the results that we have already obtained, and introduces further elements of understanding.  

%, also directed by a line. 
%
The nonlocal diffusion on the line is characterised by two parameters: its intensity~$D$, that essentially measures the importance of the traffic, and the parameter $L$, that   represents the characteristic length of individual travel. 
 This additional parameter amplifies the effect of the road on all the aspects of the overall dynamics.

Regardless of the size of all other parameters, we have shown that the spreading velocity along the line behaves like 
$\sqrt{DL^2}$. This really shows that the dynamics on the line is that of an effective reaction-diffusion process of the 
Fisher-KPP type, with the kernel $DK_L(x)$, and a reaction term of the form $\gamma u(1-u)$, 
with $\gamma$ tailored so as to obtain the spreading speed. This broadens the picture that we had already 
obtained when the diffusion on the line is given by a standard diffusion operator 
$-D\partial_{xx}$. In \cite{BRR2}, 
we showed that  the spreading velocity grows like $\sqrt D$. 
Such an effect was also observed on long range diffusion operators of the form 
$(-\partial_{xx})^\alpha$  ($0<\alpha<1$) in \cite{BCRR}. 
In this last case we proved that the spreading speed is exponential. 
The first noticeable effect of the nonlocal diffusion that we are considering is that 
the spreading velocity can be quite large if the range $L$ of the dispersal on the road is large, even if the intensity of the traffic is modest.

%\begin{center}
%\fbox{
%\parbox{\textwidth}{\textcolor{red}{
%Mentionner la formule~\eqref{w*}?\\
% Intepretation possible du fait que $w_*(d,\mu,\nu)$ en~\eqref{w*} 
%est decroissant p/r \`a~$d$:
%\newline
%lorsque $d$ augmente, 
%il y a plus de monde qui quitte la route et donc moins de monde qui profite du tres grand $D$.
%Ceci est vrai au moins pour les solutions exponentielles qu'on considere, pour lesquelles 
%$\partial_y<0$, et donc le flux ``sort'' de la route.
%}}}
%\end{center}

 Let us concentrate on the model for the propagation of epidemics. The pandemic threshold $R_0$ is the same through all the models we have studied so far, from the classical $SIR$ model to the nonlocal model we are dealing with in this paper. However, the presence of the nonlocal operator on the line has an important
 quantitative impact on the system.
 Let us first focus on the case $R_0<1$. The epidemic does not spread, 
 however the population reaches a final state 
 where much more infected individuals may nonetheless be found at very large distance from the origin of the outbreak
 in the presence of the road than without it. This is reflected in the asymptotics of the limit, 
 as $t\to+\infty$, of the cumulative density $I_{tot}$ of infected, that is $S_0(1-e^{-\beta v_\infty^r})$, 
 which decays exponentially at infinity with decay rate $a_*$, (compare~Theorem~\ref{thm:ltbSIRT}).
 The bound~\eqref{a*<<DL2} indeed shows that, even if~$D$ is small -- which would, in principle, make the decay exponent $a_*$ 
 quite large, thus granting a fast exponential decay -- the parameter $L$ can be made large enough to make
 $a_*$ arbitrarily small.
% , see~\eqref{L>>}. 
 If, on the contrary, $L$ is small, thus once again, potentially allowing $a_*$ to be quite large,
 this effect can be compensated by a very large diffusion coefficient $D$. According to~\eqref{a*<<DL2},
 what really matters is indeed the size of $DL^2$, which, if large, yields a small decay rate $a_*$.  
% Thus, our conclusions are consistent with those of \cite{SIRT1} on this issue.
 
% Formula \eqref{e3.20} shows indeed that what really matters, in the range $L$ small, is the size of the coefficient $DL^2$ 
%(in fact, the coefficient $M_1DL^2$, as $K$ may have unit mass but large second moment).
 
 The most important effects can be observed on the propagation velocity~$\cSIRT$. 
 Let~us focus on the case when $R_0$ is only slightly larger than 1, that is, a range where the epidemics progress would be expected to be slow and thus would  not appear to pose a major public health concern. In such a case however
 the propagation speed is accelerated by a factor of order $\sqrt{DL^2}$, see~\eqref{e.param.12} (or also~\eqref{w*}).  
 Thus,  the size of $L$ gives an additional important boost to enhancement. In particular, few individuals moving very far are sufficient to produce an important increase of the spreading speed of the epidemics, all other parameters being small. We retrieve, in a way that is even stronger than in \cite{SIRT1}, 
 the propagation enhancement even in the case of a seemingly mild epidemic wave.  
 
 We also observe that the factor $w_*$ appearing in the asymptotics~\eqref{w*} for the spreading speed  
  is {\em decreasing} with respect to~$d$ (see the comment at the end of Section~\ref{spreadingspeed}). 
 This  monotonicity is rather counterintuitive, and is yet another manifestation of the complexity of the interaction
 between the dynamics in the field and that on the road. One possible interpretation is the following: the flux of individuals from the field onto the road, $dv_y$, is proportional to $d$. It is also equal to $\mu u-\nu v$, which
 is a negative quantity, at least for the linear waves. As a consequence, it is all the more negative as $d$ is large, 
 thus weakening the effect of the road and resulting in a slowdown effect when $d$ becomes larger.
 One could show that this type of monotonicity (that we observe here for the first time) also holds for local diffusion models.

On the mathematical side, one observes an unexpected preservation of smoothness 
of solutions due to the interaction between the line and the upper half plane, that is not present in the classical nonlocal Fisher-KPP models.

We have finally discussed the effect of a pure unidirectional transport on the line, and we have found another surprising result. The transport on the line does enhance the overall propagation, but with an important subtlety. If $q$ is the 
velocity of the transport,  as $q\to+\infty$, the spreading speed in that direction tends to~$+\infty$
as $\kappa_*q$, with $\kappa_*$ positive but {\em strictly smaller} than $1$.

The fact that the spreading speed is strictly smaller than $q$ (and that $\kappa_*<1$)
can be interpreted as follows: infected individuals $T$
are transported by the road with a speed~$q$, but if the latter is larger than
the speed $\cSIR$ in the field, the incoming/outcoming contribution 
of infected at their location, i.e.~$\nu I-\mu T$,  is negative,
and this slows down the speed of propagation. 

This would not have been the case if 
contamination took place on the line too.
Indeed, we showed in~\cite{BRR3} that, for the model of biological invasion
with local diffusion on the road, the spreading speed behaves like~$q$ (with factor $1$)
as $q\to+\infty$ 
provided that a reaction term is also present on the road, and it is sufficiently large
compared with the exchange rate $\mu$, see \cite[Theorem~1.3]{BRR3}.
As a corroboration of the above interpretation, 
one can indeed check in our proof  that, in the present case, 
the spreading speed tends to $q$ as $\mu\to0$, and the limit factor $\kappa_*$
tends to $1$.

%-----------------------------------------------------------------------------------

\section*{Appendix}
%\label{sec:Rouche}

%\begin{Lemma}\label{lem:Rouche}
% 	For $D>D_*$, there is a sequence $c\nearrow c_*(D,L)$ such that, for any of such $c$'s,
% 	there is a solution
% 		$$\big(\tilde u_c(x-ct),\tilde v_c(x-ct,y)\big)$$
% 	to the problem~\eqref{penalised} with $\delta>0$ sufficiently small and $Y>0$ sufficiently large,
% 	
% 	admits a solution of the form 
% 
% \end{Lemma}
% 
% \begin{proof}

 In this appendix, we construct a solution of the form 
 $(\tilde u_c(x-ct),\tilde v_c(x-ct,y))$, with  $c<c_*(D,L)$ close enough to $c_*(D,L)$,
 to the penalised problem~\eqref{penalised}, 
 with $\delta>0$ sufficiently small and $Y>0$ sufficiently large.
% 
% The first step is to find a sequence $c\nearrow c_*(D,L)$ such that, for any of such $c$'s,
% the following system
% with $\delta>0$ sufficiently small and $Y>0$ sufficiently large, admits a sign-changing solution of
% the form 
% $$\big(\tilde u_c(x-ct),\tilde v_c(x-ct,y)\big).$$
 In order to fulfil the Dirichlet condition at $y=Y$, we consider a variant of the plane waves~\eqref{expsol}, namely
 \begin{equation}
 \label{expsolperturbed}
 (u(t,x),v(t,x,y))=e^{-a(x-ct)}\big(1,\gamma(e^{-b  y}-e^{by-2bY})\big).
 \end{equation}
% with $c>0$ and $a,b,\gamma\in\C$.
 Plugging it into~\eqref{penalised} yields
 $$\gamma=\frac{\mu}{\nu(1-e^{-2bY})+db(1+e^{-2bY})},
\qquad
b=g^{-1}_Y\circ G_1^c(a)=G^c_{2,\delta}(a),$$
 where $G_1^c$ is defined by~\eqref{G1} and
 $$g_Y(b):=b\coth(Yb),\qquad
 G^c_{2,\delta}(a):=\frac1{2d}\sqrt{c^2-\cK^2+4d\delta-(2d a-1)^2}.$$
 One sees that $g_Y(b)$ is increasing for $b\geq0$ and converges locally to the identity, together with its derivatives,
 as $Y\to+\infty$.
% , while $G^c_{2,\delta}(a)$ converges to the upper part of the circle~$\Gamma_2$
%  $$G_1^c(a):=\frac\nu d\left(\frac{\mu}{\mu+ca-D\vp_L(a)}-1\right)=
%  \frac1{2d}\sqrt{c^2-\cK^2-(2d a-1)^2}=:G_2(a).$$
 When $\delta\to0$ and $Y\to+\infty$ we end up with the previous system~\eqref{e4.3}, that we recall admits
 solution if and only if $c\geq c_*(D,L)$; for $c=c_*(D,L)$ the unique solution is
 $a_*,b_*,\gamma_*>0$, with
 $$b_*=G_1^{c_*(D,L)}(a_*)=G^{c_*(D,L)}_{2,0}(a_*).$$
 One also directly checks that $(G_1^c)''(a)>0$, at least for the values $a>0$ where~$G_1^c(a)\geq0$, and that 
 $(G^{c_*(D,L)}_{2,0})''<0$ in its domain.
 Let us call $h^c(a):=G_1^c(a)-G^c_{2,0}(a)$. This is an analytic function that satisfies 
 $h^c(a_*)\searrow0$ as $c\nearrow c_*(D,L)$
 (with strict monotonicity) and moreover $(h^{c_*(D,L)})''(a_*)>0$.
 These properties allow us to apply Rouché's theorem and find, for $c<c_*(D,L)$ close enough to $c_*(D,L)$,
 two complex solutions of $h^c(a)=0$ with nonzero imaginary part of order $\sqrt{h^c(a_*)}$
 (see \cite[Appendix B]{BRR2} for the detailed argument).
 The same properties are fulfilled also by the perturbation $h_{\delta,Y}^c:=g^{-1}_Y\circ G_1^c(a)-G^c_{2,\delta}(a)$
 of~$h^c(a)$ (with $c_*(D,L)$, $a_*$ replaced by some slightly different values) 
 provided $\delta$ is sufficiently small and $Y$ is sufficiently large.
 As a consequence, for $c<c_*(D,L)$ close enough to $c_*(D,L)$, we can find $\delta$ small and $Y$ large so that
 the equation $h_{\delta,Y}^c(a)=0$ admits two complex solutions with nonzero imaginary part of order $\sqrt{h^c(a_*)}$.
 Pick one of them, together with the associated $b=G^c_{2,\delta}(a)$ and $\gamma$. This provides us with 
 a complex plane-wave solution $( u(x-ct), v(x-ct,y))$ to~\eqref{penalised}.
Finally, its real part
$$\big(\tilde u_c(x-ct),\tilde v_c(x-ct,y)\big):=\big( \Re(u)(x-ct),\Re(v)(x-ct,y)\big),$$
is a real solution to~\eqref{penalised}. The positivity sets of $\tilde u_c$, $\tilde v_c$
fulfil~\eqref{u>0},~\eqref{v>0} with $\omega_c=1/\Im(a)\to+\infty$ as $c\nearrow c_*(D,L)$.

%\end{proof}

%-----------------------------------------------------------------------------------
%-----------------------------------------------------------------------------------

\end{document}